\definecolor{royalblue}{rgb}{0,0,0.128}
\def\bp{\begin{proof}}
\def\ep{\end{proof}}
\def\n{\nabla}
\def\ssfrac#1#2{\mbox{\large$\frac{#1}{#2}$}}
\def\sfrac#1#2{\mbox{\Large$\frac{#1}{#2}$}}
\def\intl#1{\int\limits_{#1}}
\def\intll#1#2{\int\limits_{#1}^{#2}}
\def\dm {|\hskip-0.05cm|}
\def\tm{|\hskip-0.05cm|\hskip-0.05cm|}
\def\OO{\Omega}
\def\displ{\displaystyle}
\def\VSE{\vspace{6pt}\\&\displ }
\def\VS{\vspace{6pt}\\\displ }
\def\VSs{\vspace{4pt}\\\displ}
\def\rf#1{{\rm(\ref{#1})}}
\newcommand{\R}{\mathbb{R}}
\newcommand{\N}{\mathbb{N}}
\def\à{à}
\def\dy{\displaystyle}
\def\vep{\varepsilon}
\def\be{\begin{equation}}
\def\ba{\begin{array}}
\def\ea{\end{array}}
\def\ee{\end{equation}}
\def\vs1{\vspace{1ex}}
\def\ov{\overline}
\def\Ã©{\'{e}}
\def\Ãš{\`{e}}
\newtheorem{lemma}
{\bf Lemma} 
\font\sc=cmcsc10
\title{\Large\bf The Navier-Stokes Cauchy problem\\in a class of weighted function  spaces}
\author{\sc  Paolo Maremonti and Vittorio Pane
\thanks{Dipartimento di Matematica e Fisica,  
Universit\`{a} degli 
Studi della Campania
``L. Vanvitelli'', via Vivaldi 43, 81100 \null\hskip0.55cmCaserta,
 Italy.\newline\null\hskip0.55cm
paolo.maremonti@unicampania.it \newline \null\hskip0.55cm vittorio.pane2@unicampania.it\newline\null\hskip0.55cm The  research activity  is performed under the
auspices of   GNFM-INdAM. }}
\date{\today}
\begin{document}
\markboth{\footnotesize\rm    P. Maremonti and V. Pane} {\footnotesize\rm
The Navier-Stokes Cauchy problem in a class of weighted function  spaces}
\maketitle 
{\small\bf Abstract} - {\small We consider the Navier-Stokes Cauchy problem with an initial datum in a weighted Lebesgue space. The weight is a  radial function increasing at infinity.  Our study partially follows the ideas of the paper by G.P.Galdi \& P.Maremonti {\it ``On the stability of steady-state solutions to the Navier-Stokes equations in the whole space''}, JMFM, {\bf 25} (2023). The authors of the quoted paper consider a special study of  stability of steady fluid motions. The results hold in 3D and for small data. Here, relatively to the perturbations of  the rest state, we generalize the result. We study the $n$D Navier-Stokes Cauchy problem, $n\geq3$. We prove the existence (local) of a unique regular solution. Moreover, the solution enjoys a spatial asymptotic decay whose order of decay is connected to  the weight.}
\vskip0.2cm \par\noindent{\small Keywords: Navier-Stokes equations,   weak solutions, energy equality. }
  \par\noindent{\small  
  AMS Subject Classifications: 35Q30, 35B65, 76D05.}  
\noindent
\newcommand{\red}{\protect\bf}
\renewcommand\refname{\centerline
{\red {\normalsize \bf References}}}
\newtheorem{ass}
{\bf Assumption} 
\newtheorem{defi}
{\bf Definition} 
\newtheorem{theorem}
{\bf Theorem} 
\newtheorem{rem}
{\sc Remark} 
\newtheorem{coro}
{\bf Corollary} 
\newtheorem{prop}
{\bf Proposition} 
\renewcommand{\theequation}{\arabic{equation}}
\setcounter{section}{0}
\section{\label{SC-I}Introduction}
In the recent paper \cite{GM} the authors consider  a problem of stability for steady solutions to the Navier-Stokes equations in the whole space. Actually, in the paper are realized  results both for steady   and for unsteady solutions to the  Navier-Stokes equations. But related to the unsteady problem  two chief different questions are achieved. The former is  a result of  global existence for small data, obtained via   a new  scaling invariant space, that is  the weighted function space $L^p_\alpha(\R^3)$, $p>3$ and $\alpha=1-\frac3p$ ($|x|^\alpha$ weight, see \rf{DSP}). Denoted by $u(t, x)$ the solution, they achieve  $u\in L^p_\alpha(\R^3)$ for all $t>0$ with a uniform bound $c(u_0)$, and $t^\frac12\dm  u(t)\dm _\infty\leq c(u_0)$ for all $t>0$. Being the weighted space embedded  in a suitable scaling invariant  Besov's space, this result  seems to be   not new. But it allows to prove the latter result.   That  is to deduce space-time asymptotic behavior as negative power of $t$ and $|x|$, respectively.  Respect to both the variables the order of decay (see below) is imposed by the exponent $\alpha$ of the weight $|x|$. Since the metric is scaling invariant  and $\alpha<1$, then the scaling value  $-1$ of the solution is compensated  in multiplicative form  by the time variable with exponent $(1-\alpha)/2$\,.\par This note works on the wake of the paper \cite{GM}. However, we consider the $n$D case, perturbations to the rest state and large data. Consequently,  we just achieve a theorem  of local existence in time. Nevertheless, we state the spatial asymptotic behavior of the solutions.
\par To better state and discuss our chief results we introduce some notation and the statements of the chief theorems.
 \par For given $p>n$ and $\alpha=1-\frac{n}{p}$, by $L^p_{\alpha}(\R^n)$ we denote the following weighted Lebesgue space
\begin{equation}\label{DSP}
\{u\, :\, u|x|^{\alpha} \in L^p(\R^n)\}
\end{equation}
endowed with the natural norm
\begin{equation}
\dm   u\dm _{\alpha,p}:=\dm u|x|^{\alpha}\dm _p\,.
\end{equation}
In the following $p'$ is the coniugate exponent of exponent  $p$ and we set $\alpha':=-\alpha$. By the symbol  $L^{p'}_{\alpha'}(\R^n)$ we mean th dual space of $L^p_\alpha(\R^n)$. Moreover, we set
\[\mathscr{C}_0(\R^n):=\{\phi \in \mathit{C}_0^{\infty}(\R^n)\, :\, \n \cdot \phi=0\}\,,\]
and by $J^{p}_{\alpha}(\R^n)$, we denote the completion of $\mathscr{C}_0(\R^n)$ in $L^p_{\alpha}(\R^n)$-norm\footnote{For more details, see Theorem\,\ref{HWD} and appendix,  and  \cite{Fr-UF,HDS} too.}.\\
We consider the problem:
\begin{equation}\label{problem}
\ba{ll}
u_t + u \cdot \nabla u + \nabla \pi= \Delta u \,,\; &\text{in}\,\, (0,T) \times \R^n\,, \VSs
\nabla \cdot u=0\,, &\text{in}\,\, (0,T) \times \R^n \,,\VSs
u(0,x)=u_0(x)\,, \; &\text{on}\,\, \{0\} \times \R^n\,,
\ea
\end{equation}
under the assumption that $u_0 \in L^p_{\alpha}(\R^n)$ and $\n\cdot u_0=0$ in weak sense.
\par Let $a\in L^p_\alpha(\R^n)$. For $t>0$ and $\rho>0$,  we set 
\begin{equation}\label{K(t,ro)}
K_a(t,\rho):=c_0 \dm a\dm _{\alpha,p,\rho} + c_1 e^{-\frac{\rho ^2}{8t}}\dm a\dm _{\alpha,p}\,,
\end{equation}
where
\[\dm a\dm _{\alpha,p,\rho}:=\sup_{x \in \R^n}\Big[\int_{B_{\rho}(x)}|a(y)|^p|y|^{\alpha p}\dy\Big]^\frac1p\,,\]
and, for $q>p$, we set
\be\label{TM}\tm u(t)\tm:=t^{\frac n{2p}}\dm u(t)|x|^{\alpha}\dm_\infty+t^\frac12\dm u(t)\dm_\infty+ t^{\frac n2\left(\frac1n-\frac1q\right)}\dm u(t)\dm_q\,.\ee
In the following, with the symbol $X(p)$ we denote the following spaces:
$$X(p):=\left\{\hskip-0.5cm\ba{lll}&L^{p'}_{\alpha'}(\R^n)\cap L^{q'}(\R^n)\,,q\in(p,2n)\,,&\mbox{for } p\in(n,2n)\,,\VSE L^{p'}_{\alpha'}(\R^n)\cap L(\ssfrac n{n-1},1)(\R^n)\,,&\mbox{for }p=2n\,,\VSE L^{p'}_{\alpha'}(\R^n)\cap L(\ssfrac{np}{p(n-2)+2n},1)(\R^n)\,,&\mbox{for }p>2n\,.\ea\right.$$
We are interested to prove the following
\begin{theorem}\label{esistenza}{\sl 
Let $u_0 \in L^p_{\alpha}(\R^n)$ and divergence free in weak sense. There exists an istant $T:=T(u_0)$ such that problem \eqref{problem} admits a solution $(u,\pi)$, with $u$ given by \rf{RI}, defined and smooth for all $t \in (0,T)$, such that 
  for all $q>p$, 
\begin{equation}\label{andamento_sol_q}
t^\frac n{2p}\dm u(t)|x|^\alpha\dm_\infty+t^{\frac{1}{2}}\dm u(t)\dm _{\infty}+t^{\frac{n}{2}(\frac{1}{n}-\frac{1}{q})}\dm u(t)\dm _q \leq K(t,\rho)+  c\dm u_0\dm _{\alpha,p}^{1-\gamma}K(t,\rho)^{\gamma}\,,\mbox{ for all }t\in(0,T)\,,
\end{equation}
where $\gamma=1-\frac{n}{q}$. 
The following limit properties hold:
\begin{equation}\label{limite_linfinito_soluzione}
\lim_{t \to 0^+} \tm u(t)\tm=0\,, 
\end{equation} \be \label{weak_convergence1}
\lim_{t\to 0}\,(u(t),\varphi)=(u_0,\varphi) \,, \text{ for all } \varphi \in  X(p)\,.
\ee
Moreover, we get\begin{equation}\label{andamento_pressione}
t^{1-\frac{n}{q}}\dm \pi\dm _{\frac{q}{2}}\leq \big[K(t,\rho)+  c\dm u_0\dm _{\alpha,p}^{1-\gamma}K(t,\rho)^{\gamma}\big]^2\,, \quad \text{for all}\,\, t \in (0,T)\,,
\end{equation}
where $c$ is a constant indipendent of $u$.
Finally, if the norm $\dm u_0\dm _{\alpha,p}$ is suitably small, then above results hold for all $t>0$.}
\end{theorem}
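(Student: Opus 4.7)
The plan is to reformulate \rf{problem} as the mild integral equation
\[u(t)=E(t)u_0+\mathcal N[u,u](t)\,,\qquad \mathcal N[u,v](t):=-\int_0^t E(t-s)\mathbb{P}\,\n\cdot(u\otimes v)(s)\,ds\,,\]
with $E(t)$ the heat semigroup and $\mathbb{P}$ the Helmholtz projector, and to run a Picard iteration $u^{(0)}(t)=E(t)u_0$, $u^{(k+1)}(t)=E(t)u_0+\mathcal N[u^{(k)},u^{(k)}](t)$ in the Banach space of measurable maps on $(0,T)\times\R^n$ for which $\sup_{0<t<T}\tm u(t)\tm$ from \rf{TM} is finite. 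The representation \rf{RI} referred to in the statement is the resulting fixed point, and the three scaling-invariant seminorms composing $\tm\cdot\tm$ are precisely the quantities the iteration must reproduce.

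The first step is the linear bound $\tm E(t)u_0\tm\leq K(t,\rho)$. For each of the three components of $\tm\cdot\tm$, one writes $E(t)u_0(x)=\int G_t(x-y)u_0(y)\,dy$ and splits the integration between $B_\rho(x)$ and $\R^n\setminus B_\rho(x)$. On the ball, a weighted H\"older inequality (pairing $|u_0(y)||y|^\alpha$ against $G_t(x-y)|y|^{-\alpha}$) together with $\dm G_t\dm_{p'}\sim t^{-n/(2p)}$ produces the local contribution $c_0\dm u_0\dm_{\alpha,p,\rho}$; on the complement the bound $G_t(x-y)\leq c\,e^{-\rho^2/(8t)}G_{2t}(x-y)$ and Young's inequality yield $c_1e^{-\rho^2/(8t)}\dm u_0\dm_{\alpha,p}$. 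To handle the weight $|x|^\alpha$ in the $L^\infty$-component I would use subadditivity $|x|^\alpha\leq c(|x-y|^\alpha+|y|^\alpha)$, absorbing $|y|^\alpha$ into the weighted norm and estimating $|x-y|^\alpha$ either by $\rho^\alpha$ on the ball or against the Gaussian tail outside; the identity $\alpha=1-n/p$ is exactly what makes the resulting powers of time match those prescribed by $\tm\cdot\tm$.

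The second, and technically central, step is the bilinear estimate
\[\tm\mathcal N[u,u](t)\tm\leq c\,\dm u_0\dm_{\alpha,p}^{1-\gamma}\,\sup_{0<s<t}\tm u(s)\tm^{1+\gamma}\,,\qquad \gamma=1-\tfrac nq\,.\]
Using the Oseen-tensor estimate $|\n E(t-s)\mathbb{P}|\lesssim (t-s)^{-(n+1)/2}$, one writes $u\otimes u$ as a product of one factor in weighted-$L^\infty$, one in $L^q$ and one in $L^p_\alpha$, the latter persistently bounded by $c\dm u_0\dm_{\alpha,p}$ through the $L^p_\alpha$-stability of the semigroup and of the iterates. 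The exponent $\gamma=1-n/q$ is forced by the requirement that the time integral $\int_0^t(t-s)^{\cdots}s^{\cdots}\,ds$ converges and simultaneously reproduces the three powers of $t$ in \rf{TM}. The iteration is then closed via the scalar inequality $M(t)\leq K(t,\rho)+c\,\dm u_0\dm_{\alpha,p}^{1-\gamma}M(t)^{1+\gamma}$ with $M(t):=\sup_{0<s<t}\tm u(s)\tm$, which gives \rf{andamento_sol_q} on an interval $(0,T)$ on which the nonlinear term is subdominant; if $\dm u_0\dm_{\alpha,p}$ is small enough, the subdominance persists for all $t>0$, delivering the global part of the statement.

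The limit \rf{limite_linfinito_soluzione} follows from \rf{andamento_sol_q} by choosing $\rho=\rho(t)$ with $\rho(t)\to 0$ and $\rho(t)^2/t\to\infty$ as $t\to 0^+$ (e.g.\ $\rho(t)=t^{1/3}$), so that both terms of $K(t,\rho(t))$ vanish---the first by absolute continuity of the weighted integral, the second through the exponential factor. The weak continuity \rf{weak_convergence1} is obtained by decomposing $(u(t),\vf)=(E(t)u_0,\vf)+(\mathcal N[u,u](t),\vf)$: the linear pairing converges by the strong continuity of the semigroup on $J^p_\alpha$ (Theorem\,\ref{HWD}), while the nonlinear pairing vanishes via duality between the Lebesgue/Lorentz factors composing $X(p)$ and the corresponding norms of $\mathcal N[u,u](t)$ controlled by the bilinear estimate. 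Finally, \rf{andamento_pressione} is the Calder\'on--Zygmund consequence of the representation $\pi=R_iR_j(u_iu_j)$ in $L^{q/2}$, squaring the $L^q$-bound already proved. The main obstacle throughout is the bilinear estimate for the weighted-$L^\infty$ component of $\tm\cdot\tm$: convolving the Oseen kernel against $u\otimes u$ while carrying the weight $|x|^\alpha$ requires a spatial splitting that must preserve the critical scaling underlying $L^p_\alpha$, and controlling this interaction is the true technical core of the argument.
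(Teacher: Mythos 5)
Your overall architecture (successive approximations for the Oseen/mild integral equation, measured in the scaling-invariant functional $\tm\cdot\tm$, with the linear part split over $B_\rho(x)$ and its complement to produce $K(t,\rho)$, the limit \rf{limite_linfinito_soluzione} via a joint choice $\rho_p\to0$, $t_p\to 0$, the pressure via Calder\'on--Zygmund, and the weak convergence via the decomposition $u=u^0+\ov u$) is the same as the paper's. But there is a genuine gap at the step you yourself identify as central. You claim the bilinear estimate $\tm\mathcal N[u,u](t)\tm\leq c\,\dm u_0\dm_{\alpha,p}^{1-\gamma}\sup_s\tm u(s)\tm^{1+\gamma}$ by pairing one factor of $u\otimes u$ in $L^p_\alpha$, ``persistently bounded by $c\dm u_0\dm_{\alpha,p}$ through the $L^p_\alpha$-stability of the semigroup and of the iterates.'' The $L^p_\alpha$-stability holds for the heat semigroup (Theorem\,\ref{SPWS}), but nothing gives you a uniform $L^p_\alpha$ bound on the iterates $u^{(k)}$: that would require an estimate of $\dm\n E*(u\otimes u)\dm_{\alpha,p}$, which is never established and which the paper deliberately avoids --- indeed the introduction stresses that the solution is \emph{not} shown to lie in the space of the initial datum uniformly in $t$. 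The paper's bilinear estimates (Lemmas\,\ref{PPNLF}, \ref{stim_termine_convettivo_inf}, \ref{stima_termine_convettivo_q}) are purely quadratic in the components of $\widetilde\tm\cdot\tm$, and the iteration closes through the scalar recursion $\xi_m\leq\xi_0+c\,\xi_{m-1}^2$ (Lemma\,\ref{lemma_iterate}), not through $M\leq K+c\dm u_0\dm^{1-\gamma}M^{1+\gamma}$.

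A related misplacement: your first step asserts $\tm E(t)u_0\tm\leq K(t,\rho)$ for all three components, but this fails for the $L^q$-component. Since $u_0$ is only in $L^p_\alpha$ with $q>p$, the bound $t^{\frac n2(\frac1n-\frac1q)}\dm E(t)u_0\dm_q$ is obtained by interpolating between $\dm E(t)u_0\dm_{\alpha,p}\leq\dm u_0\dm_{\alpha,p}$ and $t^{1/2}\dm E(t)u_0\dm_\infty\leq K(t,\rho)$ (Lemma\,\ref{IWSLq} with $r=\infty$), which yields $c\dm u_0\dm_{\alpha,p}^{1-\gamma}K(t,\rho)^{\gamma}$ --- this, and not the nonlinear term, is the origin of the second summand on the right of \rf{andamento_sol_q}; it is not dominated by $K(t,\rho)$ when $\dm u_0\dm_{\alpha,p,\rho}$ is small but $\dm u_0\dm_{\alpha,p}$ is large, which is exactly the regime the local existence argument lives in. Finally, for \rf{weak_convergence1} the duality pairing of $\ov u$ against the second factor of $X(p)$ requires the case analysis $p\in(n,2n)$, $p=2n$, $p>2n$ of Lemma\,\ref{andamento_a_0_parte_non_lineare} (Lebesgue norm $\frac q2$ versus the two Lorentz quasi-norms); your one-line appeal to ``duality'' skips the reason $X(p)$ is defined piecewise.
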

\begin{rem}\label{R-I}{\rm As it is well known, by means of standard arguments, see {\it e.g.} \cite{Knightly}, based on the representation formulas \rf{RI}-\rf{RIT},   for a solution $(u,\pi)$ furnished in Theorem\,\ref{esistenza}   one proves that for the solution $(u,\pi)$ the pointwise regularity holds  for all $(t, x)\in (\eta,T)\times\R^n$, for all $\eta>0$.
}\end{rem}
\begin{theorem}[Uniqueness]\label{unicita}{\sl
Let $(u,\pi)$ be a solution to Cauchy problem \eqref{problem} whose existence is ensured by Theorem\,\ref{esistenza}. Then $(u,\pi)$, up to a function $h(t)$ for the pressure field, is unique in the class of existence detected in Theorem\,\ref{esistenza}.}
\end{theorem}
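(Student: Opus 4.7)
The plan is a standard differencing plus contraction, driven by the smallness property \rf{limite_linfinito_soluzione} at $t\to 0^+$. Let $(u_1,\pi_1)$ and $(u_2,\pi_2)$ be two solutions in the class produced by Theorem\,\ref{esistenza}; I set $w:=u_1-u_2$ and $\Pi:=\pi_1-\pi_2$. Then $w$ satisfies a Navier--Stokes type system with zero initial datum and forcing of the form $-u_1\cdot\n w-w\cdot\n u_2$. Exploiting the integral representations \rf{RI}--\rf{RIT} already satisfied by each $u_i$, linearity delivers the Duhamel formula
\[w(t)=-\int_0^t e^{(t-s)\Delta}P\,\n\cdot\bigl(u_1\otimes w+w\otimes u_2\bigr)(s)\,ds\,,\]
where $P$ is the Leray projector.

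The heart of the argument is to estimate $w$ via the $L^q$--component of the norm $\tm\cdot\tm$ for some $q>n$. Set $\sigma:=\frac{n}{2}\bigl(\frac1n-\frac1q\bigr)=\frac12-\frac{n}{2q}\in(0,\frac12)$, and introduce
\[M(t):=\sup_{0<s\leq t}\bigl(s^\sigma\dm u_1(s)\dm_q+s^\sigma\dm u_2(s)\dm_q\bigr),\qquad N(t):=\sup_{0<s\leq t}s^\sigma\dm w(s)\dm_q\,.\]
The standard Oseen bound $\dm e^{t\Delta}P\,\n\cdot F\dm_q\leq Ct^{-\frac12-\frac{n}{2q}}\dm F\dm_{q/2}$ combined with H\"older's inequality, and the elementary identity $\int_0^t(t-s)^{-\frac12-\frac{n}{2q}}s^{-2\sigma}\,ds=B\bigl(\frac12-\frac{n}{2q},\frac{n}{q}\bigr)\,t^{-\sigma}$ (well defined since $q>n$), yield the estimate $N(t)\leq C_0\,M(t)\,N(t)$. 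By \rf{limite_linfinito_soluzione} applied to both solutions, $M(t)\to 0$ as $t\to 0^+$; choosing $t_0$ with $C_0M(t_0)<\tfrac12$ forces $N(t_0)=0$, hence $u_1\equiv u_2$ on $(0,t_0)$.

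To propagate to the full interval $(0,T)$ I would run a continuation argument: set $T^*:=\sup\{\tau\in[0,T]:u_1\equiv u_2\mbox{ on }[0,\tau]\}$ and suppose by contradiction $T^*<T$. Remark\,\ref{R-I} provides pointwise regularity at $T^*$, so the common value $u_1(T^*)=u_2(T^*)$ is a smooth bounded datum, and a restart at $T^*$ (via either the same contraction step or a classical $L^q$--uniqueness result for Navier--Stokes with $q>n$) gives equality on a right neighborhood of $T^*$, contradicting maximality. Once $u_1=u_2$ on $(0,T)$ is secured, substitution into the momentum equation gives $\n\Pi=0$, so $\Pi=\Pi(t)=:h(t)$, which is the conclusion on the pressure. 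The main obstacle is the clean justification of the Duhamel representation for $w$ in the functional class of Theorem\,\ref{esistenza}: one must verify that $u_i\otimes w\in L^{q/2}(\R^n)$ for a.e.\ $s\in(0,T)$, which is exactly what the $L^q$ control built into $\tm\cdot\tm$ by \rf{andamento_sol_q} provides, and that the $s$--integral is convergent near $s=0$, guaranteed by $2\sigma<1$.
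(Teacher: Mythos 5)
Your contraction step is sound as a formal computation and correctly locates the source of smallness in \rf{limite_linfinito_soluzione}, but there is a genuine gap exactly where you dismiss ``the main obstacle'' as a mere integrability check. The Duhamel formula for $w=u_1-u_2$ is not available ``by linearity from \rf{RI}--\rf{RIT}'': only the solution constructed in Theorem~\ref{esistenza} is known to satisfy the Oseen integral equation. A competitor $v$ in the class of existence is merely a solution of \eqref{problem} enjoying \rf{andamento_sol_q} and attaining the datum only in the weak sense \rf{weak_convergence1}, and one must \emph{prove} that it admits the representation $v=H\ast u_0-\n E\ast(v\otimes v)$ before any Duhamel identity for the difference can be written with lower limit $s=0$. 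This is the content of the paper's Lemma~\ref{Dv}, and it is where the real work sits: one needs the uniqueness of the Stokes part with data in $J^p_\alpha$ (Theorem~\ref{SPWS}) to cancel the linear parts, and then a duality argument against backward heat flows in which the boundary term $(\ov w(s),\psi(t-s))$ must be shown to vanish as $s\to0$. Since $\dm w(s)\dm_q\sim s^{-\sigma}$ blows up for every $q>p$ while the datum is attained only weakly, this forces estimates of the nonlinear part $\ov v$ in norms with exponent \emph{below} $p$ --- $L^{\ov q/2}$ for $p\in(n,2n)$, and the Lorentz quasi-norms $L(n,\infty)$ and $L(\frac{np}{2(p-n)},\infty)$ for $p=2n$ and $p>2n$ --- together with the vanishing or boundedness of these norms as $t\to0$ (Lemma~\ref{andamento_a_0_parte_non_lineare}). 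This is precisely why the paper's uniqueness argument is run in spaces outside the existence class; none of it is supplied, or identified, by your argument, whereas the verification that $u_i\otimes w\in L^{q/2}$ is the trivial part.

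A secondary issue is the continuation past the first interval, which you only sketch. The paper propagates uniqueness for $t\ge\delta$ by reducing \rf{ultima} with $s=\delta$ to a weakly singular Gronwall inequality for $h(\tau)=\dm w(\tau)\dm_r$ and invoking Lemma~\ref{L-GWSI}; this is needed because the coefficient $\sup_\tau\tau^{1/2}(\dm\ov u\dm_\infty+\dm\ov v\dm_\infty+\dm u^0\dm_\infty)$ is small only near $t=0$, so a plain contraction does not close on $[\delta,T)$. Your appeal to ``a classical $L^q$-uniqueness result'' at $T^*$ is plausible given Remark~\ref{R-I}, but it is an assertion rather than a proof, and it would still require placing the competitor in the relevant classical uniqueness class.
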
     
We give a possible reading of the behavior for large $|x|$. \par Roughly speaking,  we can state that usually to a weak initial datum $u_0$, by means of the equations, corresponds for $t>0$ a ``regularizing'' effect for the field $u_0$ defined on $\{0\}\times\R^n$, that is $u(t, x)$ is  regular   in $(0,T)\times\R^n$. Here, we find fine the fact that to a weak asymptotic property, that is $u_0\in L^p_\alpha$,  allow us to achieve an  asymptotic spatial behavior.    We believe that this is interesting because it indicates that the equations and the weighted  integrability  realize a ``spatial stabilizing effect'' too. \par This is different from the usual properties of spatial decay as given, remaining closely connected to the case of perturbations to the rest state, in \cite{CM-AB,Knightly}. In these papers the profile of the spatial asymptotic behavior of $u_0$ is given on $\{0\}\times\R^3$. \par In a next paper we investigate about these properties in the case of global weak solutions\footnote{\, In order to obtain the existence of a weak solution with data $u_0\in J^p_\alpha(\R^3)$ (scaling invariant space) we employ the approach given in \cite{M-WS} for the same question but with an initial datum $u_0\in J^3(\OO)$ (scaling invariant space).}.  In the light to prove, {\it mutatis mutandis}, the equivalent of the results obtained in \cite{CKN} (considered in \cite{CM-DC} under slightly weaker hypotheses),  where, provided that the initial datum $u_0$ is in $H^1(\R^3)$, the corresponding solution is regular in the exterior of a ball. Heuristically, some smallness of datum for large $|x|$ is enough to ensure regularity and, in our setting not only, also a suitable asymptotic behavior for all $(t, x)\in (0,\infty)\times \R^3- B_R$\,.
\par We would like to give the outlines of the proof and comments to the results. We set
\be\label{RI}u(t,x)= u^0(t,x)+\ov{u}(t,x)\,, \ee
where
\begin{equation}
u^0(t,x):=H\ast u_0(t,x)
\end{equation}
and
\be\label{RIT}\ov u(t,x):= - \int_0^t \int_{\R^n} \nabla E(t-\tau,x-y)u(\tau,y)\cdot u(\tau,y)\,dy\,d\tau\,.\ee
For the existence we follow the approach recently given in the paper \footnote{\label{EMCM}\,We take the opportunity to point out that in the print of the paper \cite{CM-L3} the ratio line in the exponents has been removed. This has been probably made by the editor  for a style question that provides for  the slash symbol in the ratio, but not suitably substituted in the formulas.  As a consequence   to a nonspecialist of semigroup properties, related to solutions of parabolic equations, some exponents  appear  with no meaning.}  
\cite{CM-L3}. More precisely, we employ the Oseen representation formula (see \rf{RI}-\rf{RIT}) that we solve via the method of the successive approximations. Denoted by $\{u^m\}$ the sequence, using the scaling invariant functional \rf{TM}, we achieve the convergence   by means of  a special decomposition  of the linear part (in this connection see  \cite{CM-KC} too).  The decomposition  furnishes smallness properties of the $\tm u^m(t)\tm$ in a neighborhood  of $t=0$,   that ensure the convergence of the sequence on some interval $(0,T(u_0))$, $u_0$ initial datum.  By stating the existence of a solution $u$ having finite norm  $\tm\cdot\tm$, hence,  {\it a priori}, not finite weighted $L^p_\alpha$-norm  of the initial datum (see \rf{andamento_sol_q}),  we get the spatial asymptotic behavior of the solution and its time singular  behavior in a right neighborhood of $t=0$, both dependending on $\alpha=1-\frac np$. 
\par We believe that  the uniqueness result is some sense intriguing. Actually, we go to state a uniqueness result just considering the special metric $\tm\cdot\tm$ and for a solution that achieves the initial datum in the weak form \rf{weak_convergence1}. Hence, our solution does not belong to a functional space with finite norm uniformly in $t\in(0,T)$, in particular that of the initial datum (as usual, in particular as in the case of the paper \cite{GM} closely connected with this), and without  requiring smallness to the size of some suitable norm of the solutions. We bridge the quoted gaps  with the following strategy. Firstly, we recall  that any solution $u$, enjoying the same regularity of our solution, can be written as the sum of a solution to the Stokes problem, say $u^0(t, x)$ that brought  the initial datum $u_0(x)$, and another solution $\ov u(t, x)$ that is related to a Stokes problem that has the convective term $u\cdot \n u$ as ``body force'' and has homogeneous initial datum (see formulas \rf{RI}-\rf{RIT}). Then we can claim that if $u\in L^{\ov q}(\R^n)$,  for a suitable $\ov q>p$, then $\ov u\in L^{\frac {\ov q}2}(\R^n)$ Lebesgue's space or, by virtue of the spatial decay, some suitable Lorentz's space (see Lemma\,\ref{andamento_a_0_parte_non_lineare} for the details). Moreover,  the property $\displ\lim_{t\to0}\dm  \ov u(t)\dm _\frac{\ov q}2=0$  or those related to the possible Lorentz norm hold.   Since the linear parts are trivially  coincident  for any pair of solutions   both corresponding to $u_0$ (see Theorem\,\ref{SPWS}), that is $u=u^0+\ov u$ and $v=u^0+\ov v$, then the difference $w:=u-v$ can be write as $w=\ov u-\ov v$.  The difference $w$ can be seen as solution to a special integral equation (see \rf{ultima}) that we discuss by means of an argument of duality as introduced by Foias in \cite{F} for the Navier-Stokes equations. The duality argument leads to apply a special Gronwall's lemma (see Lemma\,\ref{GRLNL}) that works with Gronwall's disequations in $h$ (see \rf{GRLNI})  defined by means of kernel weakly singular,  and  function $h$ defined a.e. in $t>t_0$. This strategy allow us to achieve the uniqueness in our class of existence.\par The plan of the paper is the following. In sect. 2 we furnish some preliminaries of notations and results related to some interpolation inequalities, to a generalized Gronwall's  lemma and to the solutions to the Stokes problem.  In sect. 3 we furnish the proof of Theorem\,\ref{esistenza} and Theorem\,\ref{unicita}.
\section{Preliminary results}
In this section, we recall some notions that will be employed in our proofs and we rewrite \eqref{problem} as an integral equation. Succesively, we prove some preliminary results. Hereafter we use the symbol $D_x^h$ to denote the partial derivatives $D_x^{\alpha}$
with respect the variable $x \in \R^n$, where $\alpha$ is a multi-index with $|\alpha|=h$. Also, we use the symbol $D_t^k$ to denote  the $k$-th partial derivative with respect the real variable $t$.\\
Firstly, we introduce the Lorentz spaces\footnote{\,For a wider background, we refer
the reader, {\it e.g.}, to  \cite{PKJF-LS}.}. Let $\Omega \subseteq \R^n$ and $g(x)$ be a (Lebesgue) measurable function, we denote by $\mu_g$ the distribution function of $g$, that is:
\[\mu_g(\lambda):=meas(\{x\in \Omega\,:\, |g(x)|>\lambda\})\quad \text{for all}\,\, \lambda>0,\]
and we define the nonincreasing rearrangement of $g$ as:
\[g^*(t):=\inf\{\lambda\,:\, \mu_g(\lambda)\leq t\} \quad \text{for}\,\, t\in[0,\infty).\]
Now, assume that $0<p\leq q \leq \infty$. The Lorentz space $L(p,q)(\Omega)$ is the collection of all measurable function $g(x)$ such that $\dm g  \dm _{(p,q)}<\infty$, where:
\begin{equation}
\dm g  \dm _{(p,q)}=
\begin{cases}
\bigg(\mbox{\Large$ \int$}_0^{\infty}[t^{\frac{1}{p}}g^*(t)]^q\,  \frac{dt}{t}\bigg)^{\frac{1}{q}}\quad &\text{if}\,\, 0<q<\infty,\\
\displaystyle\sup_{t\in (0,\infty)} t^{\frac{1}{p}}g^*(t) \quad &\text{if}\,\, q=\infty.
\end{cases}
\end{equation}
\\
Additionally, we consider the Helmoltz decomposition for the space $L^q_{\beta}$, $q\in(1,\infty)$. Precisely, setting:
\begin{gather*}
J^q_{\beta}(\mathbb{R}^n)=\{v\in L^q_{\beta}(\mathbb{R}^n)\, |\, (v,\nabla h)=0 \,\, \text{for all}\, h \in W^{1,q'}_{loc}(\mathbb{R}^n,\beta ')\text{,}\, \nabla h \in L^{q'}_{\beta'}(\R^n)\}\\
G^q_{\beta}(\mathbb{R}^n)=\{u\in L^q_{\beta}(\mathbb{R}^n)\, |\, \exists h\, : \,\, u=\nabla \pi \text{,}\,\text{with} \, \pi \in W^{1,q}_{loc}(\mathbb{R}^n,\beta ')\text{,}\, \nabla \pi \in L^{q}_{\beta}(\R^n)\} 
\end{gather*}
we have the following result:
\begin{theorem}\label{HWD}{\sl
Let $q \in (1,+\infty)$ and $\beta \in (-\frac{n}{q},\frac{n}{q'})$. Then,
\begin{equation}
L^q_{\beta}(\R^n)=J^q_{\beta}(\mathbb{R}^n)\oplus G^q_{\beta}(\mathbb{R}^n),
\end{equation}
holds, that is for all $u \in L^q_{\beta}(\R^n)$, $u=v+\nabla \pi_u$, with the following integral identities:
\begin{gather}
(u,\nabla \pi)=(\nabla \pi_u ,\nabla \pi) \quad \text{for all}\, \, \nabla \pi \in G_{\beta '}^{q'}(\R^n), \\
(v,\nabla \pi)=0 \quad \text{for all}\, \, \nabla \pi \in G_{\beta '}^{q'}(\R^n), 
\end{gather}
and
\begin{equation} 
\dm v\dm _{q,\beta} + \dm  \nabla \pi \dm _{q,\beta} \leq C \dm u\dm _{q,\beta},
\end{equation}
whith $C$ indipendent of $u$. Finally, $J^q_\beta(\R^n)$ coincides with the completion of $\mathscr C_0(\R^3)$ in $L^q_\beta(\R^3)$-norm.}
\end{theorem}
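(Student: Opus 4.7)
The plan is to reduce the weighted Helmholtz decomposition to the Coifman--Fefferman weighted norm inequalities for the Riesz transforms $R_j$. The key observation is that $w(x)=|x|^{\beta q}$ lies in the Muckenhoupt class $A_q(\R^n)$ if and only if $-n<\beta q<n(q-1)$, that is, precisely when $\beta\in(-n/q,n/q')$. Since each $R_j$ is a standard Calder\'on--Zygmund operator, it is therefore bounded on $L^q(w\,dx)=L^q_\beta(\R^n)$, and so are all compositions $R_iR_j$.

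Given this, I would define, for $u=(u_1,\ldots,u_n)\in L^q_\beta(\R^n)$, the candidate gradient and solenoidal parts by
\[
(\nabla\pi_u)_i:=-\sum_{j=1}^n R_iR_j u_j, \qquad v:=u-\nabla\pi_u,
\]
so that $\dm v\dm_{q,\beta}+\dm\nabla\pi_u\dm_{q,\beta}\le C\dm u\dm_{q,\beta}$ follows directly from the weighted Riesz transform bound. To check that $\nabla\pi_u$ really is a gradient, I would verify that on $\mathscr{C}_0(\R^n)^n$ it coincides with $\nabla\Delta^{-1}(\nabla\cdot u)$, so one takes $\pi_u$ as the Riesz potential of $\nabla\cdot u$, which lies in $W^{1,q}_{loc}(\R^n,\beta')$. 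The integral identities $(u,\nabla\pi)=(\nabla\pi_u,\nabla\pi)$ and $(v,\nabla\pi)=0$ for every $\nabla\pi\in G^{q'}_{\beta'}(\R^n)$ are then obtained first for smooth compactly supported $u$ by integration by parts (using $\Delta\pi_u=-\nabla\cdot u$), and extended to all of $L^q_\beta$ by density together with the weighted bounds. Directness of the splitting $J^q_\beta\cap G^q_\beta=\{0\}$ follows immediately by applying the projection just constructed to any element of the intersection.

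For the final claim, that $J^q_\beta(\R^n)$ coincides with the $L^q_\beta$-completion of $\mathscr{C}_0(\R^n)$, I would use the standard three-step approximation: given $u\in J^q_\beta$, first mollify it (which preserves the divergence-free condition), then multiply by a radial cut-off $\chi_R$ with $\chi_R=1$ on $|x|<R$, and finally correct the small divergence $(\nabla\chi_R)\cdot u_\varepsilon$ by a Bogovskii-type solution supported in the annulus $\{R<|x|<2R\}$. The point is that the weighted norm of the corrector together with the $L^q_\beta$-norm of $u$ outside large balls tend to $0$ as $R\to\infty$; for this one uses $A_q$-weighted estimates for the Bogovskii operator, uniform in $R$ by scaling.

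The technical heart of the argument is precisely this uniform-in-$R$ control of the Bogovskii corrector in the weighted space, since $|x|^\beta$ does not behave uniformly on annuli of increasing radii: the admissibility range $\beta\in(-n/q,n/q')$ is exactly what ensures that the $A_q$-character of the weight survives the rescaling to a fixed annulus. The endpoints are excluded precisely because $A_q$ fails there, making the stated range sharp.
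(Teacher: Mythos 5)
Your construction of the decomposition is essentially the paper's own, in modern dress. The paper (Appendix, Lemma~\ref{lemma} and Theorem~\ref{dec_helmotz_teorema}) also sets $\nabla\pi_u=\nabla\Delta^{-1}(\nabla\cdot u)$ for $u\in C^\infty_0(\R^n)$, obtains the weighted bound $\dm\nabla\psi\,|x|^\beta\dm_q\leq A\dm u\,|x|^\beta\dm_q$ from Stein's power-weight inequality for the singular integral $\nabla\nabla\mathcal E\ast$ --- which is exactly your $A_q$-boundedness of $R_iR_j$ specialized to $w=|x|^{\beta q}$, with the same admissible range $\beta\in(-n/q,n/q')$ --- and then extends by density. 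So the core mechanism and the reason the range is what it is coincide. Two points of genuine divergence. First, you dispatch $J^q_\beta\cap G^q_\beta=\{0\}$ and the identity $(v,\nabla\pi)=0$ for \emph{all} $\nabla\pi\in G^{q'}_{\beta'}(\R^n)$ rather quickly: the test gradients here are not gradients of compactly supported functions, so the integration by parts needs a cut-off $h_R$ and control of the commutator terms $(\pi-\ov\pi_R)\nabla h_R\cdot\nabla\psi$ on the annuli $R<|x|<2R$; the paper devotes most of its appendix to precisely these error terms (split into the cases $\beta<0$ and $\beta>0$, using the decay $|\nabla\psi|\lesssim|x|^{-n}$ and the weighted Poincar\'e-type bound). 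This is where the hypothesis on $\beta$ is used a second time, independently of the $A_q$ condition, and your ``follows immediately by applying the projection'' hides that work; it is fillable, but it is not immediate. Second, for the final claim that $J^q_\beta$ is the $L^q_\beta$-closure of $\mathscr C_0$, the paper simply cites the literature (Galdi, Simader--Sohr, Specovius-Neugebauer), whereas you sketch a mollification--truncation--Bogovski\u{\i} argument with the uniform-in-$R$ weighted estimate on dilated annuli. That sketch is sound (the weight is comparable to the constant $R^\beta$ on each annulus, so the unweighted Bogovski\u{\i} bound rescales uniformly), and it buys a self-contained proof of a step the paper outsources; the price is that you must also check $\dm B[\nabla\chi_R\cdot u_\varepsilon]\dm_{q,\beta}\to0$, i.e.\ the zeroth-order (not just gradient) Bogovski\u{\i} bound, which your scaling remark does cover.
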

\begin{proof}
For the sake of the completeness we give the proof in the  Appendix, Theorem\,\ref{dec_helmotz_teorema}. However, we also quote, {\it e.g.}, \cite{Fr-UF,HDS}. 
\end{proof}
In the following, we denote by $H(t,x)$ the fundamental solution of the heat equation, that is
\begin{equation*}
H(t,x):=\frac{1}{(4\pi t)^{\frac{n}{2}}}e^{-\frac{|x|^2}{4t}}\,,
\end{equation*}
and with $E(s,x)$ the Oseen tensor, fundamental solution of the Stokes system, with components
\begin{gather*}
E_{ij}(t,x):=-H(t,x)\delta_{ij}+ D_{x_i x_j}\phi(t,x)\\
\phi(t,x):= \mathcal{E}(x)\sfrac{1}{(2\sqrt t)^{n-2}}\int_0^{|x|}\exp\Big[-\ssfrac{a^2}{4t}\Big]\,da
\end{gather*}
where $\mathcal{E}$ is the fundamental solution of the Laplace equation.     Let $h+k\geq 1$, then the following estimates hold\footnote{ \, For more details about the Oseen tensor's properties see \cite{Knightly} or \cite{Os}.}:
\begin{gather}
|D^k_t D^h_x H(t,x)| \leq c t^{-\frac{n}{2}-k-\frac{h}{2}}e^{-\frac{|x|^2}{4t}}, \quad \text{for all}\,\, t>0 \,\, \text{and}\,\, x\in \R^{n}  \label{EHE}\,,\\
|D_{t}^k D_{x}^h E(t,x)| \leq c(|x|+t^{\frac{1}{2}})^{-n-h-k}, \quad \text{for all}\,\, t>0 \,\, \text{and}\,\, x\in \R^{n} \,. \label{prop_tens_oseen_derivate}
\end{gather}
We look for a solution to the problem \eqref{problem} by means of nonlinear Oseen integral equation
\begin{equation*}
u(t,x)= H\ast u_0 (t,x) + \int_0^t \int_{\R^n} E(t-\tau,x-y)u(\tau,y)\cdot \nabla u(\tau,y)\,dy\,d\tau\,.
\end{equation*}
After an integration by parts on $\R^n$, the last equation is equivalent to
\begin{equation}\label{equazione_integrale}
u(t,x)= H\ast u_0 (t,x) - \int_0^t \int_{\R^n} \nabla E(t-\tau,x-y)u\otimes u(\tau,y)\,dy\,d\tau\,.
\end{equation} 
\subsection{Some interpolation inequalities}
We provide two results concerning interpolation type inequalities. We have the following:
\begin{lemma}\label{IWSLq}{\sl Let $p>n$ and $r\in[p,\infty]$. 
Let $g\in  L^{p}_{\alpha}(\R^n)\cap L^{r}(\R^n)$ with $\alpha:=1-\frac{n}{p}$. Then, for all $q\in (n,r)$, there exists a positive constant $c$ indipendent of $g$ such that
\begin{equation}\label{interpolazione}
\dm g\dm_q\leq c \dm g \dm_{\alpha,p}^{1-\theta}\dm g\dm_r^{\theta}\, , \quad \mbox{with } \theta=\sfrac{r(q-n)}{q(r-n)}\, .
\end{equation}}
\end{lemma}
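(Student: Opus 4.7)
The plan is to bound $\int_{\R^n}|g|^q\,dx$ by splitting it at a free radius $R>0$: on $B_R$ the weight $|x|^\alpha$ gives no useful information and I must rely on the $L^r$ norm, while on $\R^n\setminus B_R$ the weight is favorable, $|x|^\alpha\geq R^\alpha$, and I can cash in $\dm g\dm_{\alpha,p}$. An optimization in $R$ at the end will produce the announced interpolation exponents.

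For the inner piece, H\"older's inequality with exponents $r/q,\,r/(r-q)$ (or the trivial $L^\infty$ bound if $r=\infty$) yields
$$\int_{|x|<R}|g|^q\,dx\leq c\, R^{n(r-q)/r}\dm g\dm_r^q.$$
For the outer piece, the natural H\"older decomposition depends on whether $q\leq p$ or $q>p$. When $q\in(n,p]$ I factorize $|g|^q=(|g|^p|x|^{\alpha p})^{q/p}\,|x|^{-\alpha q}$ and apply H\"older with exponents $p/q$ and $p/(p-q)$; the weight tail $\int_{|x|>R}|x|^{-\alpha pq/(p-q)}\,dx$ converges exactly thanks to $q>n$ (via $\alpha p=p-n$), producing
$$\int_{|x|>R}|g|^q\,dx\leq c\, R^{n-q}\dm g\dm_{\alpha,p}^q.$$
When $q\in(p,r)$ the exponent $p/q<1$ is inadmissible, so I switch to the three-factor decomposition $|g|^q=(|g|^p|x|^{\alpha p})^\beta\,|g|^\gamma\,|x|^{-\alpha p\beta}$ with $\beta=(r-q)/(r-p)$ and $\gamma=r(q-p)/(r-p)$, tuned so that $\beta+\gamma/r=1$; bounding $|x|^{-\alpha p\beta}\leq R^{-\alpha p\beta}$ pointwise on $\{|x|>R\}$ and then using two-factor H\"older with exponents $1/\beta,\,r/\gamma$ delivers a bound of fully analogous form.

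Summing the two contributions and optimizing in $R$ leads to
$$\dm g\dm_q^q\leq c\,\dm g\dm_{\alpha,p}^{\,n(r-q)/(r-n)}\,\dm g\dm_r^{\,r(q-n)/(r-n)},$$
which after taking $q$-th roots is exactly \eqref{interpolazione} with $\theta=r(q-n)/(q(r-n))$.

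The main obstacle is bookkeeping. In each case I have to verify that the H\"older exponents are $\geq 1$, that the weight integrals converge (this is precisely where the hypothesis $q>n$ enters, through $\alpha p=p-n$), and that both case distinctions deliver the same exponent pair $(1-\theta,\theta)$ after the optimization in $R$. A slicker unified route would be to first establish $\dm g\dm_{(n,\infty)}\leq c\,\dm g\dm_{\alpha,p}$ by a one-line Chebyshev-plus-splitting argument relying on the same identity $n+\alpha p=p$, and then appeal to the standard weak-type interpolation $L(n,\infty)\cap L^r\hookrightarrow L^q$ valid for $q\in(n,r)$; this bypasses the case split at the cost of the Lorentz-space machinery the paper has already introduced.
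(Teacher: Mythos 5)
Your proof is correct, and while it shares the paper's overall architecture (split $\R^n$ at a free radius, estimate each piece, optimize the radius), the treatment of the two regions is genuinely different. The paper handles the inner ball by first bounding $\dm g\dm_{L^1(B_{R_0})}\leq c\dm g\dm_{\alpha,p}R_0^{n-1}$ -- exploiting the local integrability of $|x|^{-\alpha p'}$ near the origin -- and then interpolating between $L^1$ and $L^r$ on the ball; you instead use only the $L^r$ norm there, via a one-line H\"older estimate $\int_{|x|<R}|g|^q\leq cR^{n(r-q)/r}\dm g\dm_r^q$, which is simpler and keeps the exponent bookkeeping transparent. On the exterior region the two arguments essentially coincide: both cash in $\dm g\dm_{\alpha,p}$ through H\"older and rely on the convergence of $\int_{|x|>R}|x|^{-\alpha pq/(p-q)}dx$ at infinity, which is exactly where $q>n$ enters; like the paper, you need the case split $q\leq p$ versus $q>p$ because the H\"older exponent $p/q$ degenerates, and your three-factor decomposition with $\beta=(r-q)/(r-p)$, $\gamma=r(q-p)/(r-p)$ is the right fix. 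I checked the bookkeeping you flagged: in both cases the optimal radius is $R=(\dm g\dm_{\alpha,p}/\dm g\dm_r)^{r/(r-n)}$ (the same choice the paper makes for $R_0$) and both cases return the exponent pair $\big(n(r-q)/(r-n),\,r(q-n)/(r-n)\big)$ for $\dm g\dm_q^q$, so the conclusion follows; the limiting cases $q=p$ and $r=\infty$ also go through. Your alternative route -- first proving $\dm g\dm_{(n,\infty)}\leq c\dm g\dm_{\alpha,p}$ by Chebyshev plus the same radial splitting, then invoking $L(n,\infty)\cap L^r\hookrightarrow L^q$ for $q\in(n,r)$ -- is also valid and arguably the most conceptual formulation, since it isolates the single piece of information the weight really carries (a weak-$L^n$ bound) and removes the case distinction entirely; the paper does not take this path, though it has the Lorentz machinery available.
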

\begin{proof}
We start proving the relation \eqref{interpolazione} for all $q \in (n,p)$. \par
Let us consider $R>R_0$, with $R_0$ to be fixed. We set $C_{R_0,R}:=B_R\setminus B_{R_0}$. In our hypotheses we can assume $g\in L^q(B_R)$. Hence, we get:
\be \label{interp1}
\dm g \dm_{L^q(B_R)}\leq  \dm g \dm_{L^q(B_{R_0})}+\dm g \dm_{L^q(C_{R_0,R})}. 
\ee
In order to estimate the first term on the right side-hand, using H\"older's inequality, we calculate
\be \label{SI:L1}
\dm g \dm_{L^1({B_{R_0}})}= \int_{B_{R_0}}|g(x)| |x|^{\alpha}|x|^{-\alpha}\,dx\leq \dm g\dm_{\alpha,p}\bigg(\int_{B_{R_0}}|x|^{-\alpha p'}\bigg)^{\sfrac{1}{p'}}= c \dm g\dm_{\alpha,p} R_0 ^{n-1}.
\ee
By  interpolation inequality for Lebesgue's spaces, taking in account last relation, we have:
\be\label{SI:Bn}\dm g \dm_{L^q({B_{R_0}})}\leq \dm g \dm_{L^1({B_{R_0}})}^{1-a}\dm g \dm_{L^r({B_{R_0}})}^{a}\leq c \dm g \dm_{\alpha,p}^{1-a}\dm g \dm_{L^r({B_{R_0}})}^a R_0^{(n-1)(1-a)}\, ,\ee
with $a=\sfrac{r(q-1)}{q(r-1)}$. Now, we consider $\dm g \dm_{L^q(C_{R_0,R})}$, the second term in the right-hand side of \eqref{interp1}. Since $q<p$, applying H\"older's inequality with exponent $p$, $q'$ and $\ssfrac{pq}{p-q}$, we get
\begin{equation*}
\begin{split}
\dm g\dm_{L^q(C_{R_0,R})}^q&=\int_{C_{R_0,R}}|g||x|^{\alpha}|g|^{q-1}|x|^{-\alpha}\,dx \leq \dm g\dm _{\alpha,p}\dm g\dm_{L^q(C_{R_0,R})}^{q-1}\bigg(\int_{C_{R_0,R}}|x|^{-\alpha \ssfrac{pq}{p-q}}\bigg)^{\ssfrac{p-q}{pq}}\\
&=c\, \dm g\dm _{\alpha,p}\dm g\dm_{L^q(C_{R_0,R})}^{q-1}\, R_0^{\, \ssfrac{n(p-q)}{pq}-\alpha}, 
\end{split}
\end{equation*}
from which, recalling the expression of $\alpha$ and dividing by $\dm g\dm_{L^q(C_{R_0,R})}^{q-1}$, we arrive at
\begin{equation}\label{int3}
\dm g\dm_{L^q(C_{R_0,R})}\leq c\, \dm g \dm _{\alpha,p}\, R_0^{\, -1+\frac{n}{q}}\, .
\end{equation}
Then, summing \eqref{SI:Bn} and \eqref{int3} we have:
\begin{equation*}
\dm g\dm_{L^q(B_R)}\leq c \dm g \dm_{\alpha,p}^{(1-a)}\, \dm g\dm_r^{a}\, R_0^{\,(n-1)(1-a)} + \dm g \dm _{\alpha,p}\, R_0^{\, -1+\frac{n}{q}}\, . 
\end{equation*}
Setting $R_0:= \Big(\frac{\dm g\dm_{\alpha,p}}{\dm g\dm_r}\Big)^{\frac{1}{1-\frac nr}}$, we arrive at 
\begin{equation}
\dm g\dm_{L^q(B_R)}\leq c \dm g \dm_{\alpha,p}^{\ssfrac{n(r-q)}{q(r-n)}}\dm g\dm_r^{\ssfrac{r(q-n)}{q(r-n)}}\,.
\end{equation}
Since the right-hand side is independent of $R$, letting $R\to\infty$, we get  \rf{interpolazione} for $q \in (n,p)$\,.\par 
Let us consider $q\in [p,r)$. Employing the same arguments used to obtain estimate \rf{SI:Bn}, we get:
\begin{equation}\label{SI:Bp}
\begin{split}
\dm g \dm_{L^q(B_{R_0})}\leq \dm g\dm_{L^1(B_{R_0})}^{1-a}\dm g \dm_{L^r(B_{R_0})}^{a}\leq \dm g \dm_{\alpha,p}^{1-a}\dm g \dm_{L^r(B_{R_0})}^{a}R_0^{(n-1)(1-a)}\,,
\end{split}
\end{equation}
with $a=\ssfrac{r(q-1)}{q(r-1)}$.\par
For the term $\dm g \dm_{L^p(C_{R,R_0})}$ we obtain
\begin{align*}
\dm g \dm_{L^p(C_{R,R_0})}^p= \int_{C_{R,R_0}}|g||x|^{\alpha}|g|^{p-1}|x|^{-\alpha}\,dx\leq R_0^{-\alpha}\dm g\dm_{\alpha,p}\dm g \dm_{L^p(C_{R,R_0})}^{p-1}
\end{align*}
from which we get
$$
\dm g \dm_{L^p(C_{R,R_0})}\leq R_0^{-\alpha}\dm g\dm_{\alpha,p}\,.
$$
By interpolation inequality on $(p,r)$ we also arrive at
\begin{equation}\label{SI:Cp}
\dm g \dm_{L^q(C_{R,R_0})}\leq \dm g \dm_{L^p(B_R)}^{1-d}\dm g \dm_{L^r(B_R)}^d\leq \dm g\dm_{\alpha,p}^{1-d}\dm g\dm_r^dR_0^{-\alpha(1-d)}
\ee
with $d=\sfrac{r(q-p)}{q(r-p)}$ and $1-d=\sfrac{p(r-q)}{q(r-p)}$, for all $q\in[p,r)$.
Summing \eqref{SI:Bp} and \eqref{SI:Cp} and setting $R_0:=\Big(\frac{\dm g\dm_{\alpha,p}}{\dm g\dm_r}\Big)^{\frac{1}{1-\frac nr}}$,  we get
\be\label{SI:BRp}\ba{ll}
\dm g\dm_{L^q(B_R)}&\leq R_0^{(n-1)(1-a)}\dm  g \dm_{\alpha, p}^{1-a}\dm g \dm_r^a +\dm g\dm_{\alpha,p}^{1-d}\dm g\dm_r^d R_0^{-\alpha (1-d)}\dm g\dm_{\alpha,p} 
\VSE=2\dm g\dm_{\alpha,p}^{\frac{n(r-q)}{q(r-n)}}\,\dm g \dm_r^{\frac{r(q-n)}{q(r-n)}}\,.\ea
\ee
Hence, letting $R \to \infty$, we obtain \eqref{interpolazione} for $q\in[p,\infty)$. \end{proof}
\begin{lemma}\label{LQFLPW}{\sl Assume $\OO\subset\R^n$  bounded and $g\in L(p,\infty)(\OO)$ Lorentz's space. Then for all $q\in[1,p)$ there exists $c(p,q)$ independent of $g$ such that
\be\label{LQFLPW-I}\dm  g\dm _q\leq c|\OO|^\alpha\dm  g\dm _{(p, \infty)}\,,\mbox{ with }\alpha:=\sfrac1q-\sfrac1p\,.\ee}\end{lemma}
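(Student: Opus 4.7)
The plan is to reduce everything to the nonincreasing rearrangement $g^*$ and then carry out a one-variable computation. Recall that $\|g\|_q=\|g^*\|_{L^q(0,\infty)}$ (a consequence of the equimeasurability of $g$ and $g^*$), so controlling $g^*$ on the half-line is enough.

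First, I would exploit the boundedness of $\Omega$. Since $\mu_g(\lambda)\leq|\Omega|$ for every $\lambda\geq 0$, the definition of $g^*$ gives $g^*(t)=0$ for all $t>|\Omega|$. Hence the $L^q$-integral collapses to the interval $(0,|\Omega|)$:
\[
\|g\|_q^q=\int_0^{|\Omega|}\bigl(g^*(t)\bigr)^q\,dt\,.
\]

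Next, I would use the very definition of the weak-$L^p$ norm: from $\|g\|_{(p,\infty)}=\sup_{t>0}t^{1/p}g^*(t)$ one reads off the pointwise bound $g^*(t)\leq t^{-1/p}\|g\|_{(p,\infty)}$ for all $t>0$. Plugging this into the previous display,
\[
\|g\|_q^q\leq \|g\|_{(p,\infty)}^q\int_0^{|\Omega|}t^{-q/p}\,dt\,.
\]
The integral converges precisely because $q<p$, and equals $\bigl(1-\tfrac{q}{p}\bigr)^{-1}|\Omega|^{1-q/p}$. Taking $q$-th roots and setting $\alpha=\tfrac{1}{q}-\tfrac{1}{p}$ so that $\tfrac{1}{q}(1-\tfrac{q}{p})=\alpha$, one obtains \eqref{LQFLPW-I} with $c(p,q)=\bigl(1-\tfrac{q}{p}\bigr)^{-1/q}$, which depends only on $p$ and $q$ (not on $g$, nor on $\Omega$).

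There is essentially no obstacle here: the only non-cosmetic ingredient is the identity $\|g\|_q=\|g^*\|_{L^q(0,\infty)}$, which is the standard equimeasurability of rearrangements. The role of the hypothesis $|\Omega|<\infty$ is to truncate the integral so that the divergence of $t^{-q/p}$ at $t=\infty$ does not matter, while $q<p$ is what ensures integrability at $t=0$.
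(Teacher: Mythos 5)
Your proof is correct, and it takes a slightly different (in fact, dual) route from the paper's. The paper works with the distribution function via the layer--cake formula $\dm g\dm_q^q=q\int_0^\infty\sigma^{q-1}m(\sigma,g)\,d\sigma$, splits the integral at a level $\rho$, bounds $m(\sigma,g)$ by $|\OO|$ on $(0,\rho)$ and by $\dm g\dm_{(p,\infty)}^p\sigma^{-p}$ on $(\rho,\infty)$, and then optimizes over $\rho$ to balance the two contributions. You instead work directly with the nonincreasing rearrangement: the boundedness of $\OO$ truncates the integration to $(0,|\OO|)$ (since $g^*(t)=0$ for $t>|\OO|$), and the single pointwise bound $g^*(t)\leq t^{-1/p}\dm g\dm_{(p,\infty)}$ finishes the computation with no splitting and no optimization. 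The two arguments use the same two pieces of information (finite measure of $\OO$ and the weak-$L^p$ decay), just read off in the level variable versus the measure variable; yours is the cleaner of the two, as it avoids the optimization step and does not need the passage from the rearrangement-based definition of $\dm\cdot\dm_{(p,\infty)}$ to the Chebyshev-type bound on $m(\sigma,g)$. Both yield a constant of the form $c(p,q)$ blowing up as $q\uparrow p$, as expected.
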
 \bp
  Assuming $\OO$ bounded, it is well known that $L(p,\infty)(\OO)\subset L^q(\OO)$, for all $q\in [1,p)$. Hence we limit ourselves to prove the estimate \rf{LQFLPW-I}. Actually, for all $\rho>0$, we get $$\ba{ll}\dm  g\dm _q^q\hskip-0.2cm&\displ=q\intll0\infty\sigma^{q-1}m(\sigma,g)d\sigma=q\intll0\rho \sigma^{q-1}m(\sigma,g)d\sigma+q\intll\rho\infty\sigma^{q-1}m(\sigma,g)d\sigma\\&\displ\leq q|\OO|\intll0\rho\sigma^{q-1}d\sigma+q\dm  g\dm _{(p,\infty)}^p\intll\rho\infty \sigma^{q-1-p}d\sigma =|\OO|\rho^q+\sfrac q{p-q}\dm  g\dm _{(p,\infty)}^p\rho^{q-p}\,.\ea$$
Setting $\rho:=\Big[\frac{ q\dm  g\dm _{(p,\infty)}^p}{(p-q)|\OO|}\Big]^\frac1p$\,, we get \rf{LQFLPW-I}.
\ep  
\subsection{A generalized Gronwall's inequality}
Let us consider the following special Gronwall's inequality:
\be\label{GRLNI}\mu\in [0,1)\,,\; h(t)\leq \psi(t)+ \intll0t\sfrac{g(\tau,h(\tau))}{(t-\tau)^\mu}d\tau\,,\;h(t)\geq0\,,\;\mbox{ a.e. in }t\in(0,T)\,.\ee
where $\psi(t) \in L(\ssfrac{1}{\mu},\infty)(0,T)$ and $g$ is a function such that, a.e. in $\tau\in(0,T)$, \begin{equation}\label{PgGI}
g(\tau,h(\tau))\leq  A h(\tau)\hskip0.05cm |\tau-t_0|^{-\nu}+Bh^q(\tau)\hskip0.05cm|\tau-t_0|^{-\sigma}\,,  \text{ for }  q\geq1,
\end{equation}
with $A$, $B\geq 0$, independent of $\tau$, $t_0 \in [0,T)$ and for  suitable exponents $\nu,\sigma\in[0,1)$.\par
We recall Gronwall's inequalities similar to \rf{GRLNI} stated  in  \cite{DM} and \cite{GMS}  slightly different in the hypotheses and thesis. In \cite{DM}, the authors consider the following weakly singular Gronwall's inequality:
\begin{equation}\label{GIDM}
x(t)\leq \psi (t) + \int_0^t \sfrac{x(s)}{(t-s)^{\delta}}\,ds\quad \text{with}\,\, \delta \in (0,1).
\end{equation}
Instead, in \cite{GMS}, Giga {\it et al.} discuss a  weakly singular Gronwall's inequality of the type:
\begin{equation}\label{GIG}
f(t)\leq \alpha+ \int_0^t a(t,s)f(s)\,ds + \beta\int_0^t [1+ \log(1+f(s))]f(s)\,ds\, , 
\end{equation}
for $a(s,t)$ that satisfy suitable assumptions. Both  assume that the functions $x(t)$ and $f(t)$, respectively, are continuous on $(0,T)$. The thesis is a suitable bound of $x(t)$ or $f(t)$ by means of $\psi(t)$ or $\alpha$, respectively.\par  
Now, we turn to the discussion of \eqref{GRLNI}. The following result holds:
\begin{lemma}\label{L-GWSI}{\sl 
In \eqref{PgGI}, let $B=0$. Moreover, suppose that $\mu+\nu<1$ and   $|t-t_0|^{-\nu}h(t)\in L^1(0,T)$. Then:
\be\label{I-G-I}\Big[\intll0t h^s(\tau)d\tau\Big]^\frac1s\leq t^{\frac1s-\mu}\dm h\dm_{(\frac{1}{\mu},\infty)}\leq C \dm \psi \dm_{(\frac{1}{\mu},\infty)}\,\;\mbox{for all }s\in[1,\ssfrac1\mu)\mbox{ and }t\in [0,T)\,,   \ee 
where $C:= C(A,T)$ is a constant.
}
\end{lemma}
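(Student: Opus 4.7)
The statement splits into an embedding inequality and a genuine Gronwall-type bound. The embedding
$$\bigl[\intll0t h^s(\tau)d\tau\bigr]^{1/s}\leq t^{1/s-\mu}\dm h\dm_{(1/\mu,\infty)}$$
is an immediate application of Lemma \ref{LQFLPW} on $\Omega=(0,t)$ with exponents $p=1/\mu$, $q=s$, noting that $s<1/\mu$ makes $\alpha=1/s-\mu>0$ and $|\Omega|=t$. So the core of the lemma is the second inequality $\dm h\dm_{(1/\mu,\infty)(0,T)}\leq C\dm\psi\dm_{(1/\mu,\infty)(0,T)}$.

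For the latter, I would rewrite \eqref{GRLNI}--\eqref{PgGI} (with $B=0$) as a convolution-type inequality,
$$h(t)\leq \psi(t)+A\intll0t (t-\tau)^{-\mu}\bigl[|\tau-t_0|^{-\nu}h(\tau)\bigr]d\tau,$$
where the bracketed quantity lies in $L^1(0,T)$ by hypothesis, while the kernel $(t-\tau)^{-\mu}$ belongs to the weak space $L(1/\mu,\infty)$. The Young--O'Neil inequality for convolutions in Lorentz spaces ($L(1/\mu,\infty)\ast L^1\hookrightarrow L(1/\mu,1)\hookrightarrow L(1/\mu,\infty)$) then yields
$$\dm h\dm_{(1/\mu,\infty)(0,T)}\leq \dm \psi\dm_{(1/\mu,\infty)(0,T)}+cA\,\dm |\cdot-t_0|^{-\nu}h\dm_{L^1(0,T)}.$$

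To close the estimate I apply Hölder's inequality in Lorentz spaces,
$$\dm |\cdot-t_0|^{-\nu}h\dm_{L^1(0,T)}\leq \dm h\dm_{(1/\mu,\infty)(0,T)}\,\dm|\cdot-t_0|^{-\nu}\dm_{(1/(1-\mu),1)(0,T)}.$$
The hypothesis $\mu+\nu<1$ is exactly $\nu/(1-\mu)<1$, so the last factor is finite and behaves as a positive power of $T$ (tending to $0$ as $T\to 0$). On a short enough initial interval $(0,T_0)$ the $\dm h\dm_{(1/\mu,\infty)}$ term can be absorbed on the left, giving $\dm h\dm_{(1/\mu,\infty)(0,T_0)}\leq c\dm \psi\dm_{(1/\mu,\infty)(0,T_0)}$. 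A finite iteration along successive intervals $(kT_0,(k+1)T_0)$ then covers $(0,T)$: on the $k$-th step the contribution of $\int_0^{kT_0}$ to the Volterra term is controlled by the already-established $L^1$-bound of $|\cdot-t_0|^{-\nu}h$, while the fresh piece $\int_{kT_0}^t$ is absorbed exactly as before, producing $C=C(A,T)$.

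The principal technical hurdle is the simultaneous presence of two weak singularities, at $\tau=t$ (from the kernel $(t-\tau)^{-\mu}$) and at $\tau=t_0$ (from $|\tau-t_0|^{-\nu}$, with $t_0$ possibly interior to $(0,T)$). Strong-Lebesgue estimates alone do not disentangle them; it is precisely the Lorentz-scale Young and Hölder inequalities, together with the sharp compatibility condition $\mu+\nu<1$, that make the absorption/bootstrap close on small intervals and hence iterate up to arbitrary $T$.
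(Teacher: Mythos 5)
Your argument is correct and follows essentially the same route as the paper: both rest on the weak-type mapping of the weakly singular integral operator from $L^1$ into $L(\frac1\mu,\infty)$, both close the loop by estimating $\dm h\,|\cdot-t_0|^{-\nu}\dm_1$ by $\dm h\dm_{(\frac1\mu,\infty)}$ times the power $\bar t^{\,1-\mu-\nu}$ (the paper passes through an intermediate $L^s$ norm with $s\in(\frac1{1-\nu},\frac1\mu)$ via Lemma~\ref{LQFLPW}, where you invoke Lorentz--H\"older directly), and both then absorb on a short subinterval and iterate up to $T$. The only flaw is the intermediate claim $L(\frac1\mu,\infty)\ast L^1\hookrightarrow L(\frac1\mu,1)$, which fails at this endpoint; but only the weak-type conclusion $L(\frac1\mu,\infty)\ast L^1\hookrightarrow L(\frac1\mu,\infty)$ is actually used, so the proof stands.
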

\bp 
Let $t \in (0,T)$.  At first, we discuss \rf{GRLNI} on $(0,\ov t)$, with $\ov{t}= \ssfrac{t}{N}$ and $N \in \N$ that we fix later.  We set $\ov{h}(t):=\chi_{(0,\ov{t})}h(t)$, where $\chi_{(0,\ov{t})}$ is the charatteristic function of $(0,\ov{t})$ and we consider the integral operator  $$ \mathbb T( g)(t):=\int_{\R}\sfrac{g(\tau,\ov{h}(\tau))}{|t-\tau|^\mu}d\tau\,,\mbox{ for all }t\in(0,\ov t)\,.$$ From the theory of weakly singular integral, we know that $\mathbb T$ is linear and continuous from $L^1(\R)$  into $L(\frac1\mu,\infty)(\R)$. Hence, the following estimates hold:
\be\label{G-II}\dm \mathbb T(g(\tau,\ov h(\tau))\dm_{(\frac1\mu,\hskip0.05cm\infty)}\leq cA \dm \ov{h}\hskip0.05cm|t_0-\tau|^{-\nu}\dm_1\,.\ee 
Taking \rf{GRLNI} into account, the inequality $\ov{h}(t)\leq \psi(t) + \mathbb T(g(\tau,\ov h(\tau))(t)$ holds almost everywhere in $t\in(0,\ov t)$. Hence, on $(0,\ov t)$  we  get
\be\label{G-III}\dm \ov{h}\dm_{(\frac1\mu,\hskip0.05cm\infty)}\leq 2\dm \psi \dm_{(\frac{1}{\mu},\infty)}+ 2\dm \mathbb T(g(\tau,\ov h(\tau))\dm_{(\frac1\mu,\hskip0.05cm\infty)}\leq 2\dm \psi \dm_{(\frac{1}{\mu},\infty)}+ c A\dm \ov{h}\hskip0.05cm|t_0-\tau|^{-\nu}\dm_1 \,.\ee 
Since in our assumptions $\mu+\nu<1$, the interval $(\frac1{1-\nu},\frac{1}{\mu})$ is not empty.  Applying H\"older's inequality, via estimate \rf{LQFLPW-I}, for all $s\in (\frac1{1-\nu},\frac{1}{\mu})$, we get
\be\label{G-IV}\dm \ov{h}\hskip0.05cm|t_0-\tau|^{-\nu}\dm_1\leq c\dm \ov{h}\dm_s\ov t\hskip0.05cm^{\frac1 {s'}-\nu}\leq c\ov t^{\hskip0.05cm \gamma}\dm \ov{h}\dm_{(\frac1\mu,\hskip0.05cm\infty)}\,,\mbox{ with }\gamma:=1-\mu-\nu>0\,.\ee
Increasing the right-hand side of \rf{G-III} by means of  \rf{G-IV}, we have
\[\dm \ov{h}\dm_{(\frac1\mu,\hskip0.05cm\infty)}\leq 2\dm \psi \dm_{(\frac{1}{\mu},\infty)}+cA\ov t^{\hskip0.05cm \gamma}\dm \ov{h}\dm_{(\frac1\mu,\hskip0.05cm\infty)}. \]
Choosing $N$ large enough to get $1-cA\ov t^{\hskip0.05cm \gamma}>0$ and moving $cA\ov t^{\hskip0.05cm \gamma}\dm \ov{h}\dm_{(\frac1\mu,\hskip0.05cm\infty)}$ to the left-hand side, we arrive at
\[\dm \ov{h}\dm_{(\frac1\mu,\hskip0.05cm\infty)}\leq \sfrac{1}{1-cA\ov t^{\hskip0.05cm \gamma}}\dm \psi \dm_{(\frac{1}{\mu},\infty)},\]
that, via estimate \eqref{LQFLPW-I} for $\dm h\dm_s$, implies the thesis on $(0,\ov{t})$. Now, recalling the definition of $\ov t$, let us consider the following partition:
$$(0,t)=\cup_{i=0}^{N-1}(i\hskip0.05cm\ov{t},(i+1)\hskip0.05cm\ov{t})$$
Defining $\ov{h}_i:= h \hskip0.05cm\chi_{(i\hskip0.05cm\ov{t},(i+1)\hskip0.05cm\ov{t})}$ and iterating the same arguments used before, we get:
\begin{equation*}
\dm h\dm_{L(\frac1\mu,\hskip0.05cm\infty)(0,t)}\leq c\mbox{$\underset{i=1}{\overset{N-1}\sum}$} \dm \ov{h}_i\dm_{L(\frac1\mu,\hskip0.05cm\infty)(i\hskip0.05cm\ov{t},(i+1)\hskip0.05cm\ov{t})}\leq \sfrac{c}{1-cA t^{\gamma}}\dm \psi \dm_{(\frac{1}{\mu},\infty)}.
\end{equation*}
Again, via estimate \eqref{LQFLPW-I}, last relation implies the thesis on $(0,t)$ for all $t \in [0,T)$. 
\end{proof}
\begin{lemma}\label{GRLNL}{\sl 
Let  $\psi \equiv 0$ in \eqref{GRLNI}. Let $\nu + \mu<1$, $\sigma + \mu<1$ and $q \in [1,\frac{1}{\mu})$.
Moreover, assume that
\[\sigma< 1-\ssfrac{q}{s}, \quad\text{with}\,\,s \in \big(\ssfrac{1}{1-\nu},\ssfrac{1}{\mu}\big)\,\, \text{and}\,\, s>q,\] 
and  $Ah|t_0-\tau|^{-\nu}+Bh^q|t_0-\tau|^{-\sigma}\in L^1(0,T)$. Then in \rf{GRLNI} $h$ is zero almost everywhere in $t\in(0,T)$\,.}\end{lemma}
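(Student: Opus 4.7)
The plan is to adapt the scheme of Lemma \ref{L-GWSI}, proving first that $h\equiv 0$ on a small initial subinterval $(0,\bar t)$ with $\bar t=T/N$, and then iterating over the partition $(0,T)=\bigcup_{i=0}^{N-1}(i\bar t,(i+1)\bar t)$. A preliminary observation is that the $L^1$-hypothesis on $Ah|t_0-\tau|^{-\nu}+Bh^q|t_0-\tau|^{-\sigma}$, combined with the continuity of the weakly singular operator $\mathbb T:L^1(\R)\to L(\frac1\mu,\infty)(\R)$ already used in Lemma \ref{L-GWSI}, supplies an \emph{a priori} finite bound $\dm h\dm_{L(\frac1\mu,\infty)(0,T)}\le M$.

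Writing $\bar h:=h\chi_{(0,\bar t)}$ and inserting \eqref{PgGI} (with $\psi\equiv 0$) into \eqref{GRLNI}, I obtain
\[
\dm \bar h\dm_{(\frac1\mu,\infty)}\le cA\,\dm \bar h\,|t_0-\tau|^{-\nu}\dm_1+cB\,\dm \bar h^{\,q}\,|t_0-\tau|^{-\sigma}\dm_1 .
\]
For the linear term I argue exactly as in Lemma \ref{L-GWSI}: H\"older with exponents $s$ and $s'$, together with Lemma \ref{LQFLPW} applied to $\dm \bar h\dm_s$, yields $\dm \bar h\,|t_0-\tau|^{-\nu}\dm_1\le c\,\bar t^{\,1-\mu-\nu}\,\dm \bar h\dm_{(\frac1\mu,\infty)}$, with a strictly positive power of $\bar t$ by $\mu+\nu<1$ and with $|t_0-\tau|^{-\nu}\in L^{s'}(0,\bar t)$ thanks to $s>\frac1{1-\nu}$. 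For the nonlinear term I split by H\"older with the conjugate pair $(\frac sq,\frac s{s-q})$; both exponents are admissible because $s>q$, and the condition $\sigma<1-\frac qs$ is exactly what makes $|t_0-\tau|^{-\sigma}\in L^{s/(s-q)}(0,\bar t)$. Applying Lemma \ref{LQFLPW} once more to $\dm \bar h\dm^q_s$ gives
\[
\dm \bar h^{\,q}\,|t_0-\tau|^{-\sigma}\dm_1\le c\,\bar t^{\,q(\frac1s-\mu)+1-\frac qs-\sigma}\,\dm \bar h\dm^q_{(\frac1\mu,\infty)}=c\,\bar t^{\,1-q\mu-\sigma}\,\dm \bar h\dm^q_{(\frac1\mu,\infty)},
\]
and this exponent is strictly positive because $\sigma<1-\frac qs<1-q\mu$ (using $s<\frac1\mu$). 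Combining,
\[
\dm \bar h\dm_{(\frac1\mu,\infty)}\bigl(1-cA\,\bar t^{\,1-\mu-\nu}-cB\,\bar t^{\,1-q\mu-\sigma}\,\dm \bar h\dm^{q-1}_{(\frac1\mu,\infty)}\bigr)\le 0 .
\]

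Using the \emph{a priori} bound $\dm \bar h\dm_{(\frac1\mu,\infty)}\le M$, one can choose $N$ large enough that the parenthesis is strictly positive; this forces $\dm \bar h\dm_{(\frac1\mu,\infty)}=0$ and hence $h=0$ a.e. on $(0,\bar t)$. Since $g(\tau,0)=0$, for $t\in(\bar t,2\bar t)$ the integral in \eqref{GRLNI} reduces to one over $(\bar t,t)$; after a time translation this places me back in exactly the same situation on an interval of length $\bar t$, with the same $M$ and the same parameters, so the same argument applies verbatim. Iterating $N$ times yields $h\equiv 0$ a.e.\ on $(0,T)$.

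I expect the main delicate point to be the exponent accounting: verifying that the hypotheses $\mu+\nu<1$, $\mu+\sigma<1$, $q\in[1,\frac1\mu)$, together with the existence of $s\in(\frac1{1-\nu},\frac1\mu)\cap(q,\infty)$ with $\sigma<1-\frac qs$, are precisely what is needed to (i) make both H\"older splittings legitimate, (ii) keep both singular kernels $|t_0-\tau|^{-\nu},\,|t_0-\tau|^{-\sigma}$ integrable over $(0,\bar t)$ in the appropriate Lebesgue exponent, and (iii) leave strictly positive powers of $\bar t$ in both bounds so that the absorption step succeeds once $\bar t$ is chosen small.
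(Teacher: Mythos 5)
Your proposal is correct and follows essentially the same route as the paper: localize to $(0,\overline t)$, use the $L^1\to L(\frac1\mu,\infty)$ continuity of the weakly singular operator, apply H\"older with exponents $(s,s')$ and $(\frac sq,\frac s{s-q})$ together with Lemma \ref{LQFLPW}, absorb for $\overline t$ small, and iterate over the partition. Your preliminary \emph{a priori} bound $\dm h\dm_{L(\frac1\mu,\infty)(0,T)}\leq M$, deduced from the $L^1$-hypothesis, is a slightly cleaner way to justify the final absorption than the paper's appeal to \eqref{hs} to get $\dm h\dm_s\leq 1$, but it is the same argument in substance.
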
 
\bp
We initially discuss \rf{GRLNI} on $(0,\ov t)$ with $\ov t$ small enough. By iterating the arguments, the result trivially can be extended to all $\ov t\in (0,T)$ once again.  We set $\ov{h}(t):=\chi_{(0,\ov{t})}h(t)$ and consider  the integral operator again  $$ \mathbb T( g)(t):=\intl{\R}\sfrac{ g(\tau,\ov h)}{|t-\tau|^\mu\hskip-0.15cm}\hskip0.15cmd\tau\,,\mbox{ for all }t\in(0,\ov t)\,.$$ Operator $\mathbb T$ is linear and continuous from $L^1(\R)$  into $L(\frac1\mu, \infty)(\R)$. Hence, the following estimates hold:
\be\label{GR-II}\dm  \mathbb T( g(\tau,\ov{h}(\tau))\dm _{(\frac1\mu,\hskip0.05cm\infty)}\leq c \dm  g(\tau,\ov{h}(\tau))\dm _1\leq cA\dm  \ov{h}\hskip0.05cm|t_0-\tau|^{-\nu}\dm _1+cB \dm  \ov{h}^{\hskip0.05cm q}\hskip0.05cm |t_0-\tau|^{-\sigma}\dm _1 \,.\ee  
Taking \rf{GRLNI} into account, the inequality $\ov{h}(t)\leq \mathbb T(g(\tau,\ov{h}(\tau))(t)$ holds almost everywhere in $t\in(0,\ov t)$. Hence, on $(0,\ov t)$  we  get
\be\label{GRLN-III}\dm  \ov{h}\dm _{(\frac1\mu,\hskip0.05cm\infty)}\leq \dm  \mathbb T(g(\tau, \ov{h}(\tau))\dm _{(\frac1\mu,\hskip0.05cm\infty)}\leq  c \dm  g(\tau,\ov{h}(\tau))\dm _1\leq cA\dm  \ov{h}\hskip0.05cm|t_0-\tau|^{-\nu}\dm _1+cB \dm  \ov{h}^{\hskip0.05cm q} \hskip0.05cm|t_0-\tau|^{-\sigma}\dm _1 \,.\ee 
By virtue of \rf{LQFLPW-I} and \rf{GRLN-III}, for $s\in[1,\frac1\mu)$,  we also deduce
\be \label{hs}
\dm  \ov{h}\dm _s\leq c\hskip0.05cm\ov t^{\hskip0.05cm\frac1s-\mu}\Big[A\dm  \ov{h}\hskip0.05cm|t_0-\tau|^{-\nu}\dm _1+B \dm  \ov{h}^{\hskip0.05cm q} \hskip0.05cm|t_0-\tau|^{-\sigma}\dm _1\Big]\,.
\ee
Since $\nu+\mu<1$, we can consider $s\in(\frac1{1-\nu},\frac1\mu)$. Hence, $s'\nu<1$ holds. Applying  H\"older's inequality, we get
$$\dm  \ov{h}\hskip0.05cm|t_0-\tau|^{-\nu}\dm _1\leq c\hskip0.05cm\dm  \ov{h}\dm _s\hskip0.05cm\ov t^{\hskip0.05cm\frac1{s'\hskip-0.1cm}\hskip0.1cm-\nu}\,,\mbox{ for }s\in (\ssfrac1{1-\nu},\ssfrac1\mu)\,.$$ Hence, we also get
$$\dm  \ov{h}\hskip0.05cm|t_0-\tau|^{-\nu}\dm _1\leq c\dm  \ov{h}\dm _s\ov t^{\hskip0.05cm\frac1{s'\hskip-0.1cm}\hskip0.1cm-\nu}\leq c\ov t^{\hskip0.05cm1-\mu-\nu}\Big[A\dm  \ov{h}\hskip0.05cm|t_0-\tau|^{-\nu}\dm _1+B \dm  \ov{h}^{\hskip0.05cm q} \hskip0.05cm|t_0-\tau|^{-\sigma}\dm _1\Big]\,.$$
Therefore, summing we arrive at
$$\dm  \ov{h}\hskip0.05cm|t_0-\tau|^{-\nu}\dm _1+\dm  \ov{h}\dm _s\leq 
c\big[\ov t^{\hskip0.05cm1-\mu-\nu}+\ov t^{\hskip0.05cm\frac1s-\mu}\big]\Big[A\dm  \ov{h}\hskip0.05cm|t_0-\tau|^{-\nu}\dm _1+B \dm  \ov{h}^{\hskip0.05cm q} \hskip0.05cm|t_0-\tau|^{-\sigma}\dm _1\Big]\,.$$ By assumption on $\sigma$, setting $\theta=1-\frac{q}{s}-\sigma$, a further application of H\"older's inequality furnishes
\[\dm  \ov{h}^{\hskip0.05cm q} \hskip0.05cm|t_0-\tau|^{-\sigma}\dm _1\leq \bigg(\int_0^{\ov{t}}h(\tau)^s\, d\tau\bigg)^{\frac{q}{s}}\bigg(\int_0^{\ov{t}}\hskip0.05cm|t_0-\tau|^{-\sigma(\frac{s}{s-q})}\bigg)^{1-\frac{q}{s}}=c \,\ov{t}^{\hskip0.05cm \theta}\,\dm\ov{h}\dm_s^q \]
from which we arrive at
$$\dm  \ov{h}\hskip0.05cm|t_0-\tau|^{-\nu}\dm _1+\dm  \ov{h}\dm _s\leq 
c\big[\ov t^{\hskip0.05cm1-\mu-\nu}+\ov t^{\hskip0.05cm\frac1s-\mu}\big]\Big[A\dm  \ov{h}\hskip0.05cm|t_0-\tau|^{-\nu}\dm _1+B\ov{t}^{\hskip0.05cm \theta} \dm \ov{h}\dm _s^q\Big]\,.$$
Thanks to estimate \eqref{hs}, for $\ov t$ sufficiently small we get $\dm  h\dm _s\leq1$, and a further possible assumption of smallness on $\ov t$ one proves $h=0$ a.e. in $(0,\ov t)$. Hence, the lemma is proved.
 \ep 
\subsection{The Stokes Cauchy problem}
Let us consider the Stokes Cauchy problem
\begin{equation}\label{eq_calore}
\begin{cases}
u_t(t,x)-\Delta u(t,x)=-\n \pi(t,x) + \n \cdot (a \otimes b(t,x))\qquad &\text{in} \quad (0,\infty)\times \R^n\,, \\
\nabla \cdot u(t,x)=0 \qquad &\text{in} \quad (0,\infty)\times \R^n \,,\\
u(0,x)=u_0(x) \qquad &\text{on}\quad  \{0\}\times \R^n\,.
\end{cases}
\end{equation}
We recall the following results
\begin{lemma}\label{SHE}{\sl In \eqref{eq_calore}, assume that $a=b\equiv 0$. Let $\varphi_0\in \mathscr C_0(\R^n)$. Then we get a unique solution to problem \rf{eq_calore} such that
\be\label{RHE}\varphi(t, x)\in C^k(0,T;{\underset{q>1}\cap} J^q(\R^n))\cap({\underset{q>1}\cap} L^q(0,T;W^{2,q}(\R^n))\,,\mbox{ for }k\in\N_0\mbox{ and for all }T>0\,,\ee
and the representation formula holds:\be\label{RHEO} 
\varphi(t,x)=\int_{\R^n} H(t,x-y)\varphi_0(y)\,dy\,.
\ee
In particular, for all $r\geq q>1$, there exists a constant $c$ such that \be\label{SP}\dm \varphi(t)\dm_r+(t-s)^\frac12\dm\n \varphi(t)\dm_r\leq c(t-s)^{-\frac n2\left(\frac1q-\frac1r\right)}\dm \varphi(s)\dm_q\,,\mbox{ for all }t>s\geq0\,.\ee Finally,  for  $q\in(1,\infty)$ and $r\in[1,\infty)$, we get 
\be\label{SPLS}\ba{c}\varphi\in C([0,T);L(q,r)(\R^n))\,,\VS\dm \varphi(t)\dm_{(q,r)}+(t-s)^\frac12\dm\n \varphi(t)\dm_{(q,r)}\leq c\dm \varphi(s)\dm_{(q,r)}\,\mbox{ for all }t>s\geq0\,.\ea\ee
}\end{lemma}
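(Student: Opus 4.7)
The plan is to exploit the fact that with zero forcing and divergence‑free initial datum the Stokes system collapses to the heat equation component by component, so that the whole statement reduces to classical heat‑kernel estimates.

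First, I would apply the Helmholtz--Leray projector to \eqref{eq_calore}: it commutes with $\Delta$ and annihilates $\nabla\pi$, while $\varphi_0\in\mathscr C_0(\R^n)$ is already solenoidal. Uniqueness within a class with sufficient decay at infinity then forces $\pi$ to be a space‑time constant and each Cartesian component of $\varphi$ to solve $\varphi_t-\Delta\varphi=0$ with datum $\varphi_0$. This delivers the representation \eqref{RHEO}. The regularity \eqref{RHE} follows by differentiating under the integral sign in \eqref{RHEO}, which is legitimate because $H(t,\cdot)\in C^\infty$ with all derivatives rapidly decaying for each $t>0$ and because $\varphi_0\in C_0^\infty$; the divergence‑free character is preserved under convolution with a radial kernel.

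Next, I would derive \eqref{SP} from the semigroup property $\varphi(t)=H(t-s)\ast\varphi(s)$ via Young's inequality with exponents $1+\tfrac1r=\tfrac1m+\tfrac1q$. A direct computation gives
\[
\dm H(t-s)\dm_m = c\,(t-s)^{-\frac{n}{2}(1-\frac1m)}=c\,(t-s)^{-\frac{n}{2}(\frac1q-\frac1r)},
\qquad
\dm \nabla H(t-s)\dm_m \leq c\,(t-s)^{-\frac12-\frac{n}{2}(\frac1q-\frac1r)},
\]
and Young's inequality applied to $\varphi(t)$ and $\nabla\varphi(t)=\nabla H(t-s)\ast\varphi(s)$ yields both halves of \eqref{SP}. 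Note that no cancellation between pressure gradient and Laplacian is needed, because $\pi\equiv 0$ in this case.

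For the Lorentz statement \eqref{SPLS} I would invoke O'Neil's convolution inequality for Lorentz spaces: since $H(t)\in L^1(\R^n)$ with $\dm H(t)\dm_1=1$, convolution with $H(t)$ is a contraction on every $L(q,r)(\R^n)$; similarly $\dm \nabla H(t)\dm_1\leq c\,t^{-1/2}$ supplies the gradient factor. Strong continuity $\varphi\in C([0,T);L(q,r))$ for $r\in[1,\infty)$ follows from density of $\mathscr C_0(\R^n)$ in $L(q,r)(\R^n)$ together with dominated convergence applied to $H(t)\ast\varphi_0-\varphi_0$ as $t\to 0^+$, plus the contraction bound just quoted.

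There is no genuine obstacle here: the argument is pure bookkeeping on the heat kernel. The only point requiring a small amount of care is the $L^1$‑boundedness on Lorentz spaces, which is precisely what O'Neil's inequality provides.
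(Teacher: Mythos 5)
Your proposal is correct and follows essentially the same route as the paper, whose proof of this lemma consists almost entirely of citations: existence, uniqueness and \eqref{RHE}--\eqref{RHEO} are referred to the monograph of Lady\v zenskaja--Solonnikov--Ural'ceva, \eqref{SP} is obtained exactly as you do by Young's theorem applied to the representation formula, \eqref{SPLS}$_1$ by interpolation, and \eqref{SPLS}$_2$ by reference to the Lorentz-space Stokes literature. You have simply written out the standard details (Leray projection to reduce to the heat equation, heat-kernel $L^m$ bounds, and the $L^1$-convolution bound on $L(q,r)$ via O'Neil/real interpolation) that the paper delegates to those references, and these details are sound.
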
\bp The existence, the  uniqueness and properties \rf{RHE}-\rf{RHEO} are classical results and we refer to monograph \cite{LSU}. Concerning \rf{SP}, it also is classic, in any case is an easy consequence of the Young theorem applied to \rf{RHEO}. Property \rf{SPLS}$_1$    follows from \rf{RHE} via the interpolation between Lebesgue spaces. Finally, for estimate \rf{SPLS}$_2$ see, {\it e.g.}, \cite{Maremonti_lorentz,yamazaki}. \ep
With the aim to making the work self-contained, we give the existence theorem and uniqueness of problem \rf{eq_calore} with $a=b\equiv 0$ and 
with initial data in some weighted Lebesgue spaces and divergence free in weak sense. For a general initial boundary value problem in exterior domains in the weighted spaces, we refer to the papers \cite{Ko-Ku}, \cite{Fw-Sh}, \cite{Fr-I,Fr-II}. However, in the quoted papers, in the case of the Cauchy problem, we are not able to detect if estimate \rf{DCWS} holds with $c=1$, that is, the achievement of the same result as the usual $L^p$ theory.
 \par The following result holds:
\begin{theorem}\label{SPWS}{\sl 
In \eqref{eq_calore}, assume  $a=b\equiv 0$. Let $\alpha=1-\frac{n}{p}$ with $p \in (n,+\infty)$ and $n\geq 3$. Assume $u_0 \in J_\alpha^p(\R^n)$. Then, there exists a unique  smooth solution $u(t,x)$ to the Cauchy problem \eqref{eq_calore} such that 
\begin{equation}\label{DCWS}
\dm u(t)\dm _{\alpha, p}\leq \dm u(s)\dm _{\alpha,p}\,,\mbox{ for all }t>s \geq 0\,,
\end{equation}
\begin{equation}\label{convergenza_parte_lineare}
\lim_{t \to 0^+} \dm u(t) - u_0\dm _{\alpha,p} = 0\,.
\end{equation}
The  representation formula \rf{RHEO}  holds with $\varphi_0(y)=u_0(y)$.
Moreover, for all $s,h \in \N_0$ with $s+h \geq 0 $, and for $q \in (p,\infty)$, there is a costant $c$ such that 
\begin{equation}\label{gradienti_eq_stokes_beta_q}
\dm D_t^h\nabla^s u(t)\dm _{q}\leq ct^{-\frac{s}{2}-h-\frac{n}{2}({\frac{1}{n}}-\frac{1}{q})}\dm u_0\dm _{\alpha, p}\,, \text{ for all } t>0\,.
\end{equation}
  Finally,  for all $T>0$ and $q>p$, we get \be\label{CRHE}u\in C([0,T);L^p_\alpha(\R^n))\cap C((\vep,T); L^q(\R^n)),\mbox{ for all }\vep>0\,.\ee} 
\end{theorem}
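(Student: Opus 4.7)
The plan is to carry the standard semigroup theory for the heat equation over to the weighted class $L^p_\alpha(\R^n)$ through a density argument, exploiting that by definition $J^p_\alpha(\R^n)$ is the $\|\cdot\|_{\alpha,p}$-closure of $\mathscr C_0(\R^n)$. First I would approximate $u_0$ by a sequence $u_0^{(k)}\in\mathscr C_0(\R^n)$ and set $u^{(k)}(t,x):=(H*u_0^{(k)})(t,x)$. By Lemma\,\ref{SHE} each $u^{(k)}$ is smooth, divergence-free and satisfies the heat equation, so the representation formula \rf{RHEO} is built in at the smooth level.

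The heart of the argument is the weighted contraction \rf{DCWS} for the smooth approximants. I would set this up by duality in the pair $(L^p_\alpha, L^{p'}_{\alpha'})$ provided by Theorem\,\ref{HWD}: for $g\in\mathscr C_0$ the identity
\begin{equation*}
(u^{(k)}(t), g) = (u_0^{(k)},\,H(t)*g)
\end{equation*}
reduces \rf{DCWS} to the dual estimate for the heat convolution on $L^{p'}_{\alpha'}$. An alternative is to compute $\tfrac{d}{dt}\|u^{(k)}(t)\|_{\alpha,p}^p$ directly: after integration by parts the leading dissipative term $-p\int|x|^{\alpha p}|u|^{p-2}|\nabla u|^2dx$ is dominant, and the correction coming from $\nabla|x|^{\alpha p}$ is absorbed through a weighted Hardy-Cauchy-Schwarz step that uses $\alpha=1-n/p$ and the Muckenhoupt $A_p$ character of $|x|^{\alpha p}$.

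For the decay bound \rf{gradienti_eq_stokes_beta_q}, I would start from the representation and H\"older-factorize $|u_0(y)| = (|u_0(y)||y|^\alpha)\cdot|y|^{-\alpha}$ to obtain
\begin{equation*}
|u(t,x)| \leq \|u_0\|_{\alpha,p}\,\|H(t,x-\cdot)|\cdot|^{-\alpha}\|_{p'}.
\end{equation*}
Because $\alpha p'=(p-n)/(p-1)<n$ (consequence of $p>n$ and $\alpha<1$), a direct scaling computation with the Gaussian kernel gives $\|H(t,\cdot)|\cdot|^{-\alpha}\|_{p'} \leq ct^{-1/2}$, hence $\|u(t)\|_\infty \leq ct^{-1/2}\|u_0\|_{\alpha,p}$, which is precisely \rf{gradienti_eq_stokes_beta_q} for $s=h=0$, $q=\infty$. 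The intermediate $L^q$ bounds for $q\in(p,\infty)$ follow by interpolating between the contraction of the previous step and this $L^\infty$ bound via Lemma\,\ref{IWSLq}, and the higher-order estimates for $D_t^h\nabla^s u$ come from differentiating under the integral sign and using \rf{EHE}.

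To pass to the limit, the contraction makes $\{u^{(k)}\}$ Cauchy in $L^\infty(0,T; L^p_\alpha)$, so the sequence produces a solution $u$ for arbitrary $u_0\in J^p_\alpha$. A three-$\varepsilon$ argument together with the classical fact that $H(t)*\phi \to \phi$ in $L^p_\alpha$ for $\phi\in\mathscr C_0$ yields \rf{convergenza_parte_lineare}, and the continuity $u\in C((\varepsilon,T);L^q)$ follows from the smoothing estimates combined with Lemma\,\ref{SHE}; uniqueness in the class is then immediate by applying the contraction to the difference of two solutions. The principal obstacle will be the weighted contraction with constant exactly one: a naive Schur test on the kernel $H(t,x-y)|x|^\alpha|y|^{-\alpha}$ only produces uniform boundedness of the row and column integrals (neither is pointwise $\leq 1$), so the proof must genuinely exploit the weighted Helmholtz-Weyl duality of Theorem\,\ref{HWD} or a delicate sign analysis of the weight-derivative term in the energy identity.
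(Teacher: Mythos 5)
Your overall architecture (approximation from $\mathscr C_0(\R^n)$, contraction, pointwise bound, interpolation, limit passage) matches the paper's, and you correctly isolate the crux: the contraction \rf{DCWS} with constant exactly $1$. But you leave that crux unresolved, and of the two routes you sketch, the second would fail. A direct computation of $\frac{d}{dt}\|u(t)\|_{\alpha,p}^p$ in the primal space produces, after integration by parts, a weight-derivative term with coefficient $\alpha p(\alpha p+n-2)=(p-n)(p-2)>0$: it enters with the \emph{wrong} sign, is a genuine source term of the form $(p-n)(p-2)\int|u|^p|x|^{\alpha p-2}dx$, and any Muckenhoupt or Hardy absorption into the dissipation costs a multiplicative constant, so it cannot yield \rf{DCWS} as stated. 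The paper's actual resolution is your first route carried out concretely: it runs the energy identity for the \emph{dual} problem, i.e.\ for $\varphi=H*\varphi_0$ tested against $h_R\,\varphi(\sigma^2+|\varphi|^2)^{(p'-2)/2}|x|^{\alpha'p'}$ with $\alpha'=-\alpha<0$. There the analogous coefficient is $\alpha'p'(\alpha'p'+n-2)$ with $\alpha'p'=(n-p)/(p-1)<0$ and $\alpha'p'+n-2=(p(n-3)+2)/(p-1)>0$ for $n\geq3$, so the term is negative and can simply be discarded, giving $\|\varphi(t)\|_{\alpha',p'}\leq\|\varphi_0\|_{\alpha',p'}$; the self-adjointness of $H(t)*$ and H\"older's inequality then transfer this to $\|u(t)\|_{\alpha,p}\leq\|u_0\|_{\alpha,p}$. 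Note this is plain $L^p_\alpha$--$L^{p'}_{\alpha'}$ H\"older duality; the Helmholtz decomposition of Theorem \ref{HWD} is not what is needed here. Until you supply the dual estimate, the central inequality of the theorem is unproven, and the hypotheses $n\geq3$, $p>n$ (hence $p'\in(1,n/(n-1))$) enter precisely through this sign computation, so they must appear explicitly.

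A smaller point: your claim that uniqueness is ``immediate by applying the contraction to the difference of two solutions'' is too quick. Estimate \rf{DCWS} is derived for the representation-formula solution, whereas the theorem asserts uniqueness among smooth solutions of the Stokes system \eqref{eq_calore}, whose difference solves $w_t-\Delta w=-\nabla\pi_w$ with $w(0)=0$; one must still rule out nontrivial gradient-driven solutions, which the paper does by pairing $w$ with a backward dual solution $\widehat\varphi(\tau,x)=\varphi(t-\tau,x)$ and invoking the dual contraction once more. The remainder of your outline --- the $t^{-1/2}$ pointwise bound via H\"older and Gaussian scaling, interpolation through Lemma \ref{IWSLq}, the semigroup step for the derivatives, and the density passage to $u_0\in J^p_\alpha(\R^n)$ --- is sound and coincides with the paper's argument.
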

\begin{proof}{[{\it Existence}]}
Firstly, assume that $\varphi_0 \in \mathscr{C}_0(\R^n)$. By virtue of Lemma\,\ref{SHE}, we know that exists one, and only one, smooth solution $(\varphi(t, x),c(t))$   to the equations \eqref{eq_calore} enjoying \rf{RHE}.    
Let $\alpha'=-\alpha$ and $p'$ conjugate exponent of $p$. Let $h_R \in [0,1]$ be a smooth cutoff function with:
\begin{equation*}
h_R =
\begin{cases}
1 \quad \text{if}\,\, |x|\leq R \,,\\
0 \quad \text{if}\,\, |x|\geq 2R\,,
\end{cases}
\end{equation*}
and $\nabla h_R=O(\frac{1}{R})$. Recalling that for a $\phi$ radial function the following holds
\[\Delta \phi(r)=\phi '' (r) + \frac{n-1}{r}\phi '(r)\,,\]
then, multiplying   the first equation in \eqref{eq_calore} by $h_R\,\varphi(\sigma^2+|\varphi|^2)^{\frac{p'-2}{2}}|x|^{\alpha'  p'}$, with $\sigma>0$, by means of an integration by parts over $\R^n$, we obtain
\be\label{int_parti_SE}\hskip-0.2cm\ba{ll}\displ
&\displ\sfrac{d}{dt}\dm h_R^\frac1{p'\hskip-0.1cm}(|\varphi(t)|^2+\sigma^2)^{\frac{1}{2}}\dm _{\alpha' , p'}^{p'}+ p'\!\intl{\R^n}\!h_R\,|\nabla \varphi(t,x)|^2(\sigma^2+|\varphi(t, x)|^2)^{\frac{p'-2}{2}}|x|^{\alpha' p'}\,dx   \\
&\displ\hskip3.1cm+ p'(p'-2)\!\intl{\R^n}h_R\Big(\nabla \varphi(t,x)\varphi(t,x) \Big)^2(\sigma^2+|\varphi(t, x)|^2)^{\frac{p'-4}{2}}|x|^{\alpha' p'} dx \\
&\displ\hskip3.6cm=-I_R(t,\sigma)+\alpha'  p'(\alpha'  p'\hskip-0.05cm +\hskip-0.05cm n \hskip-0.05cm- 2) \!\intl{\R^n}\!h_R(\sigma^2\hskip-0.1cm+|\varphi(t, x)|^2)^{\frac{p'-2}{2}}|x|^{\alpha'  p'-2}dx ,
\ea\ee where we set
$$I_R(t,\sigma):= \intl{\R^n}|x|^{\alpha'p'}(\sigma^2+|\varphi|^2)^\frac{p'}2 \Delta h_R dx+2 \intl{\R^n} \n h_R\cdot\n |x|^{\alpha'p'}(\sigma^2+|\varphi|^2)^{\frac{p'}{2}}dx\,.$$
Integrating both sides of  relation \rf{int_parti_SE} on $(0,t)$, recalling \eqref{RHE} for $q=p'$ and letting $\sigma \to 0$ first and subsequently $R\to \infty$, by virtue of the Lebesgue theorem of the dominate convergence we have:
\begin{align*}
&\dm \varphi(t)\dm _{\alpha' , p'}^{p'}+ p' \int_0^t\intl{\R^n}|\nabla \varphi(t,x)|^2|\varphi(t,x)|^{p'-2}|x|^{\alpha' p'}\,dx\,d\tau+ \\ 
& \hskip4cm+ p'(p'-2)\int_0^t\intl{\R^n}\Big(\nabla \varphi(t,x)\varphi(t,x) \Big)^2|\varphi(t,x)|^{p'-4}|x|^{\alpha' p'}\, dx\\
&\hskip6cm=\alpha'  p'(\alpha'  p' + n - 2)\int_0^t \intl{\R^n}|\varphi|^{p'}|x|^{\alpha'  p'-2}\,dx + \dm \varphi_0\dm _{\alpha',p'}.
\end{align*}
In our assumptions, the first term on the right hand in the last equation is a negative quantity. So, for all $t>0$, we can deduce that
\begin{equation}\label{stima_per_convergenza}
\dm \varphi(t)\dm_{\alpha',p'}\leq \dm\varphi_0\dm _{\alpha',p'}
\end{equation}
for $p' \in (1, \frac{n}{n-1})$ and for all $t>0$. Testing the equations in $\varphi(t, x)$, one easily achieves the weak continuity, that is $(\varphi(t),\phi)\in C((0,T))$, for all $\phi\in L^p_\alpha(\R^n)$. Hence, employing  estimate \rf{stima_per_convergenza}, one deduces 
\be\label{FTS}\lim_{t\to0}\dm \varphi(t)-\varphi_0\dm _{\alpha',p'}=0\,.\ee
  We consider
\be\label{FRSU}u(t,x)=\int_{\R^n} H(t,x-y)u_0(y)\,dy\ee
the solution to problem \eqref{eq_calore} with initial datum $u_0\in \mathscr C_0(\R^n)$. By the H\"older inequality and by estimate \eqref{stima_per_convergenza} we get
\be\label{DALQ}\ba{ll}
(u(t),\varphi_0)\hskip-0.2cm&=\!\displ \int_{\R^n}\!\!\Big[\int_{\R^n}\!\!\!H(t,x-y)u_0(y)\,dy\Big]\varphi_0(x)dx=\!\int_{\R^n}\!\!\Big[\int_{\R^n}\!\!\!H(t,x-y)\varphi_0(x)\,dx\Big]u_0(y)dy\VS
&=\displ \int_{\R^n}\varphi(t,y)u_0(y)dy\leq \dm \varphi(t)\dm _{\alpha',p'}\dm u_0\dm _{\alpha,p}\leq \dm \varphi_0\dm _{\alpha',p'}\dm u_0\dm _{\alpha,p}
\ea\ee
from which we obtain:
\begin{equation}\label{stima_alfa_p}
\dm u(t)\dm _{\alpha,p}\leq \dm u_0\dm _{\alpha,p}
\end{equation}
for $p>n$ and for all $t>0$.  By testing \rf{eq_calore} in $u(t, x)$, one deduces the weak continuity, and employing \rf{stima_alfa_p} we get 
    $$\lim_{t\to0}\dm u(t)-u_0\dm_{p,\alpha}=0\,.$$ Differentiating  \rf{FRSU}, then, employing the duality argument used in \rf{DALQ} for $u_t$, for all $t>0$, we also get
\be\label{RGE-I}\dm u_t(t)\dm_{\alpha,p}\leq \dm \Delta u_0\dm_{\alpha,p}\,.\ee
Hence, as a consequence of estimates \rf{stima_alfa_p} and \rf{RGE-I} and regularity of $u(t, x)$, we get \be\label{RGE-III}u\in C([0,T);L^{p}_{\alpha}(\R^n))\,,\mbox{ for all }T>0\,.\ee
{{Now, we need to estimate $D_t^h \nabla_x^s u(t,x)$. Firstly, we observe that by virtue of Lemma \ref{IWSLq}, estimates \eqref{FGMSI} and \eqref{stima_alfa_p} , for all $q \in (p,\infty)$ and $t>0$, we have:
\be\label{SLQ}\dm u(t)\dm_{q}\leq c \dm u(t) \dm^{1-\gamma}_{\alpha,p} \dm u(t)\dm^{\gamma}_{\infty}\leq c t^{-\frac{n}{2}(\frac{1}{n}-\frac{1}{q})}\dm u_0\dm_{\alpha,p}\ee
Now, recalling the well known estimate 
\begin{equation} \label{usare_per_conv_2}
\dm D_t^h\nabla^s u(t-\sigma)\dm _{q}\leq c (t-\sigma)^{-\frac{s}{2}-h} \dm u(\sigma)\dm_q\,,  
\end{equation}
for all $h,s \in \N_0$ and $t>\sigma\geq0$,}} choosing $\sigma=\frac t2$ and then employing \rf{SLQ}, we arrive at 
\be\label{LQ-LW}\dm D_t^h\nabla^s u(t)\dm _{q}\leq ct^{-\frac{s}{2}-h-\frac{n}{2}(\frac{1}{n}-\frac{1}{q})}\dm u_0\dm _{\alpha, p}\,.\ee 
Now, we consider $u_0\in J_{\alpha}^p(\R^n)$. There exists a sequence $\{u_0^m\}\subset \mathscr{C}_0$ such that $u_0^m \to u_0$ in $J_{\alpha}^p(R^n)$ and we indicate with $\{u^m(t,x)\}$ the sequence of smooth solutions to the Cauchy problem \rf{eq_calore} with $a=b=0$.
By virtue of the linearity of the problem  and \eqref{LQ-LW},  we also  get:
\begin{gather*}
\dm u^m(t)-u^{\nu}(t)\dm _{\alpha, p}\leq \dm u_0^m-u_0^{\nu}\dm _{\alpha,p}\\
\dm D_t^h\nabla^s u^m(t)-D_t^h\nabla^s u^{\nu}(t)\dm _{q}\leq ct^{-\frac{s}{2}-h-\frac{n}{2}({\frac{1}{n}}-\frac{1}{q})}\dm u^m_0-u^{\nu}_0\dm _{\alpha, p}
\end{gather*}
which ensure existence of a limit $u(t,x)$ uniformly in $t>0$ and $u$ enjoys \rf{CRHE}. Moreover, for the limit $u(t,x)$, \eqref{stima_alfa_p} and \eqref{usare_per_conv_2} trivially hold.\par[{\it Uniqueness}] As it is usual for the $L^p$-theory, also in our special $L^p$-spaces,  we use a duality argument to state   the uniqueness. Let  $(u(t,x), \pi_u(t,x))$ and $(v (t,x), \pi_v(t,x))$ be two solutions to the Cauchy Stokes problem \eqref{eq_calore} with initial datum $u_0$. We set $w(t,x): = u (t,x)-v (t,x)$ and $\pi_w(t, x):=\pi_u(t, x)-\pi_v(t, x)$. So, $w$ is a solution to following problem:
\begin{equation} \label{eq_w}
\begin{cases} 
w_t - \Delta w = -\n\pi_w\qquad &\text{in} \quad (0,\infty)\times \R^n \\
\n \cdot w(t,x)=0 \qquad &\text{in} \quad (0,\infty)\times \R^n \\
w(0,x)=0\qquad &\text{on}\quad  \{0\}\times \R^n
\end{cases}
\end{equation}
Let $\varphi_0 \in \mathscr{C}_0(\R^n)$ 
and let $(\varphi(t,x),c(t))$ a solution to the Stokes Cauchy problem \eqref{eq_calore} corresponding to $\varphi_0$ enjoying \rf{RHE}, \rf{stima_per_convergenza} and  \rf{FTS}. We define:
\begin{equation}
\widehat{\varphi}(\tau,x) = \varphi(t-\tau,x) \qquad \text{for} \,(\tau,x) \in (0,t)\times \R^n
\end{equation}
As it is known $\widehat{\varphi}$ is a solution backward in time on $(0,t)\times\R^3$ with $
\widehat{\varphi}(t,x) = {\varphi}_0(x) $ on $  \{t\}\times \R^n
$.
Multiplying the first equation in  \eqref{eq_w} for $\widehat{\varphi}$ and integranting by parts on $[s,t] \times \R^n$, we obtain:
\begin{equation*}
(w(t),\varphi_0) = (w(s),\varphi(t-s))
\end{equation*}
from which 
\begin{equation*}
|(w(t),\varphi_0)| = |(w(s),\varphi(t-s))| \leq \dm w(s)\dm _{\alpha,p}\dm \varphi(t-s)\dm _{\alpha',p'}
\end{equation*}
and, considering \eqref{stima_per_convergenza},
we get 
\begin{equation*}
|(w(t),\varphi_0)| \leq \dm w(s)\dm _{\alpha,p} \dm \varphi_0\dm _{\alpha',p'}
\end{equation*}
The second term tends to $0$ for $s \to 0$, so $$(w(t),\varphi_0) = 0$$Due to the arbitrariness of $\varphi_0$ and $t$, the last relation implies that $w = 0 $ on $\{t\} \times \R^n$ for all  $t>0$.
This complete the proof. \end{proof}
\begin{lemma}\label{GMSI}{\sl
Let $g\in L^p_{\alpha}(\R^n)$. Then for the convolution product $H\ast g$ the following estimate holds
\begin{equation}\label{FGMSI}
t^{\frac{1}{2}}\dm H\ast g(t)\dm_{\infty}\leq c \dm g\dm_{\alpha,p}, \quad \text{for all}\,\, t>0.
\end{equation}}
\end{lemma}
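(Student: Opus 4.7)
The plan is to bound the convolution pointwise via H\"older's inequality, after splitting the weight $|y|^{\alpha}$ off of $g$, and then to compute the resulting $L^{p'}$ norm of the heat kernel against the dual weight by an explicit parabolic rescaling. The final step is to verify that the rescaled integral is bounded uniformly in the space variable, which reduces to a mild integrability condition on $\alpha p'$ near the origin and Gaussian decay at infinity.

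First, for fixed $t>0$ and $x\in\R^n$, write
\[
|H\ast g(t,x)|=\Big|\intl{\R^n}H(t,x-y)|y|^{-\alpha}\cdot g(y)|y|^{\alpha}\,dy\Big|
\leq \Big(\intl{\R^n}H(t,x-y)^{p'}|y|^{-\alpha p'}\,dy\Big)^{1/p'}\dm g\dm_{\alpha,p}.
\]
So the lemma reduces to showing
\[
\sup_{x\in\R^n}\Big(\intl{\R^n}H(t,x-y)^{p'}|y|^{-\alpha p'}\,dy\Big)^{1/p'}\leq c\,t^{-1/2}.
\]
Next, I perform the change of variables $y=\sqrt{t}\,z$ and set $\xi:=x/\sqrt t$. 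Using the explicit form of the heat kernel, one finds the integral equals
\[
\frac{t^{-np'/2-\alpha p'/2+n/2}}{(4\pi)^{np'/2}}\intl{\R^n}e^{-p'|\xi-z|^2/4}|z|^{-\alpha p'}\,dz.
\]
Taking the $p'$-th root, the power of $t$ that appears is
\[
-\sfrac{n}{2}+\sfrac{n}{2p'}-\sfrac{\alpha}{2}=-\sfrac{n}{2p}-\sfrac{\alpha}{2}=-\sfrac12,
\]
since $\alpha=1-\frac{n}{p}$. This identifies the exact scaling $t^{-1/2}$ required in \rf{FGMSI}.

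It remains to show that
\[
M:=\sup_{\xi\in\R^n}\intl{\R^n}e^{-p'|\xi-z|^2/4}|z|^{-\alpha p'}\,dz<\infty.
\]
Here I split the domain into $\{|z|<1\}$ and $\{|z|\geq 1\}$. On the first piece I simply drop the exponential and use the integrability of $|z|^{-\alpha p'}$ near the origin, which holds because
\[
\alpha p'=\sfrac{p-n}{p-1}<1\leq n-1<n\quad\text{for }p>n,\; n\geq 3.
\]
On the second piece I use $|z|^{-\alpha p'}\leq 1$ and invoke translation invariance to bound the Gaussian integral uniformly in $\xi$. The only mildly delicate point is checking the exponent $\alpha p'<n$, which is the technical heart of the argument but is immediate from the choice $\alpha=1-n/p$. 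Combining the bound $M<\infty$ with the scaling computation yields \rf{FGMSI}.
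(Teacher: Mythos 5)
Your proof is correct, and it is genuinely self-contained, whereas the paper disposes of Lemma \ref{GMSI} by citing Lemma 6 of \cite{GM} and gives no argument at all. Your route --- H\"older against the dual weight, the parabolic rescaling $y=\sqrt t\,z$, $\xi=x/\sqrt t$, and the elementary verification that $\sup_\xi\int e^{-p'|\xi-z|^2/4}|z|^{-\alpha p'}\,dz<\infty$ via the split $|z|<1$ versus $|z|\ge1$ --- is also different in technique from what the authors do in the two neighboring results that treat the same kernel. In Lemma \ref{stima_L_infinito} they peel off $H^{p'-1}\le ct^{-n(p'-1)/2}$ and then regard $\mathcal G(t,x)=\int H(t,x-y)|y|^{-\alpha p'}\,dy$ as the heat semigroup acting on the initial datum $|y|^{-\alpha p'}\in L(\frac{n}{\alpha p'},\infty)(\R^n)$, invoking $L(r,\infty)\to L^\infty$ smoothing in Lorentz spaces; in Lemma \ref{GMWP} they estimate $H^{p'}*|y|^{-\alpha p'}$ by splitting $|y|<\frac{|x|}2$ against $|y|>\frac{|x|}2$ (estimate \rf{PELD}). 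What their Lorentz-space route buys is reusability: the same semigroup estimate drives the refined bound \rf{stima_L_infinito_formula} with the localized quantity $K_g(t,\rho)$, which is what the existence scheme actually needs. What your rescaling buys is transparency: the exponent $-\frac12$ drops out identically from $\alpha=1-\frac np$ with no semigroup machinery, and the only hypothesis used is $0<\alpha p'=\frac{p-n}{p-1}<n$, which you verify correctly. Your exponent arithmetic ($\frac{n}{2p'}-\frac n2-\frac\alpha2=-\frac12$) and the uniformity in $\xi$ of the constant $M$ are both sound, so the argument stands as a complete proof of \rf{FGMSI}.
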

\begin{proof}
See, {\it e.g.}, Lemma\,6 in \cite{GM}.
\end{proof}
\begin{lemma}\label{GMWP}{\sl Let $g\in L^p_\alpha(\R^n)$. There exists a constant $c$, independent of $g$, such that for all $(t,x)$
\be\label{GMWP-I}t^{\frac 12(1-\alpha)} |x|^\alpha|H*g (t, x)|\leq c\dm g\dm_{\alpha,p}\,.\ee}\end{lemma}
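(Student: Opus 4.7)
The plan is to reduce the pointwise bound for $H\ast g(t,x)$ to a weighted $L^{p'}$--estimate on the heat kernel via H\"older's inequality, and then to extract the sharp powers of $t$ and $|x|$ by splitting the $y$--integration into a \emph{far region} and a \emph{near region}. The underlying mechanism is exactly the one of Lemma\,\ref{GMSI} (cf.\ also Lemma\,6 in \cite{GM}), but now one has to keep track of the extra weight $|x|^\alpha$ on the left-hand side.

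First I would absorb the weight $|y|^\alpha$ into $g$ and apply H\"older with conjugate exponents $p,p'$:
\[
|H\ast g(t,x)|\leq \intl{\R^n}H(t,x-y)|y|^{-\alpha}\cdot |g(y)||y|^\alpha\,dy\leq \dm g\dm_{\alpha,p}\Big(\intl{\R^n}H(t,x-y)^{p'}|y|^{-\alpha p'}\,dy\Big)^{\frac1{p'}}.
\]
A preliminary remark is that $\alpha p'=(p-n)/(p-1)<n$, so the integrand is locally summable at $y=0$. Setting $r:=|x|$, I split the integration domain according to $\{|y|\geq r/2\}$ and $\{|y|<r/2\}$. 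On the far region the weight is harmless, $|y|^{-\alpha p'}\le 2^{\alpha p'}r^{-\alpha p'}$, and a direct Gaussian evaluation gives $\intl{\R^n} H(t,x-y)^{p'}\,dy=c_n\,t^{-n(p'-1)/2}$. On the near region $|x-y|\geq r/2$, so the Gaussian factor of $H(t,x-y)^{p'}$ can be pulled out as $e^{-p'r^2/(16t)}(4\pi t)^{-np'/2}$, while the remaining integral of the weight over a ball of radius $r/2$ equals $c_n\,r^{n-\alpha p'}$.

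Taking the $1/p'$--root of each piece and multiplying by $t^{\frac12(1-\alpha)}r^\alpha$, I exploit the identity $\alpha=1-\frac{n}{p}$, which yields the crucial matching $\frac12(1-\alpha)=\frac{n}{2p}=\frac{n(p'-1)}{2p'}$. In the far region this makes all powers of $t$ and $r$ cancel, leaving just the constant $c$. In the near region the algebra leaves a factor of the shape $(r^2/t)^{n/(2p')}\,e^{-r^2/(16t)}$, uniformly bounded on $(0,\infty)$ since the real function $s\mapsto s^{n/(2p')}e^{-s/16}$ is bounded. Summing the two contributions yields \rf{GMWP-I}.

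The step I expect to be the main obstacle is purely the careful bookkeeping of exponents in the two regions; no conceptual new idea is needed beyond the scaling choice $\alpha=1-\frac{n}{p}$, which is precisely tuned so that the powers of $r$ and $t$ in the far-region estimate match and cancel, while the near-region Gaussian tail automatically absorbs the leftover algebraic factor.
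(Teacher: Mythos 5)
Your proposal is correct and follows essentially the same route as the paper's proof: H\"older's inequality with exponents $p,p'$ reduces the claim to estimating $H^{p'}\ast|y|^{-\alpha p'}$, which is then split over $\{|y|<|x|/2\}$ and $\{|y|>|x|/2\}$ exactly as in \rf{PELD}. The only cosmetic difference is that the paper bounds the heat kernel by $c(|z|+t^{1/2})^{-n}$ while you keep the Gaussian and absorb the leftover power $(r^2/t)^{n/(2p')}$ into the exponential tail; the exponent bookkeeping is equivalent.
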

\bp This result is proved in \cite{GM}, Corollary\,1. Here, for the convenience of the reader, we reproduce the very short proof. Applying H\"older's inequality, we get
$$|H*g(t,x)|\leq  \big[ H^{p'}*|y|^{-\alpha p'}(t,x)\big]^\frac1{p'}\dm g\dm_{\alpha,p}\,.$$
Recalling \rf{EHE}, $|H(t,z)|\leq c(|z|+t^\frac12)^{-n}$, we show the following estimate\be\label{PELD}\ba{ll}H^{p'}*|y|^{-\alpha p'}(t,x)\hskip-0.2cm&\leq\displ \intl{|y|<\frac{|x|}2}H^{p'}(t,x-y)|y|^{-\alpha p'}dy+\intl{|y|>\frac{|x|}2}H^{p'}(t,x-y)|y|^{-\alpha p'}dy\VSE \leq c|x|^{-n}t^{-\frac n2\frac1{p-1}}\intl{|y|<\frac{|x|}2}|y|^{-\alpha p'}dy
+c|x|^{-\alpha p'}\intl{\R^n}H^{p'}(t,z)dz\VSE \leq c |x|^{-\alpha p'}t^{-\frac n2\frac1{p-1}}\,,\ea\ee where $c$ is a constant independent of $(t,x)$, which leads to estimate \rf{GMWP-I}.\ep
{The following results give an estimate that slightly differs from that obtained in  Lemma\,\ref{GMSI} and Lemma\,\ref{GMWP}. They are closely related to our approach. Indeed, to prove the existence of a solution to problem \eqref{problem}, we seek an upper bound that is appropriately small as a function of $t$. The following estimates will be utilized in Lemma \ref{Lemma_8} with $g=u_0$. }
\begin{lemma}\label{stima_L_infinito}{\sl
Let $g \in L^p_{\alpha}(\R^n)$. Then, for the convolution product $H\ast g$ the following estimate holds
\begin{equation}\label{stima_L_infinito_formula}
t^{\frac{1}{2}}\dm H\ast g(t)\dm _{\infty}\leq K_g(t,\rho),\quad \text{for all}\, \, t>0\mbox{ and }\rho>0\,.
\end{equation}
where $K_g(t, \rho)$ is definied in \eqref{K(t,ro)}.
}
\end{lemma}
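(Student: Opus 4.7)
The strategy is to split the heat convolution by the geometry of $B_\rho(x)$, treating the ``far'' part via Gaussian decay to extract $e^{-\rho^2/(8t)}$ and the ``near'' part via a Hölder inequality so that only the local weighted norm $\|g\|_{\alpha,p,\rho}$ intervenes. Concretely, I would write
\[
H\ast g(t,x)=\int_{B_\rho(x)}H(t,x-y)\,g(y)\,dy+\int_{\R^n\setminus B_\rho(x)}H(t,x-y)\,g(y)\,dy=:I_1(t,x)+I_2(t,x),
\]
and estimate the two pieces separately so that $t^{1/2}|I_1|$ contributes $c_0\|g\|_{\alpha,p,\rho}$ and $t^{1/2}|I_2|$ contributes $c_1 e^{-\rho^2/(8t)}\|g\|_{\alpha,p}$.

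For $I_2$ I would use the elementary identity $e^{-|x-y|^2/(4t)}\leq e^{-\rho^2/(8t)}e^{-|x-y|^2/(8t)}$ valid on $|x-y|\geq\rho$. This lets one write
\[
H(t,x-y)\,\chi_{\{|x-y|\geq\rho\}}\leq c\,e^{-\rho^2/(8t)}\,H(2t,x-y),
\]
so that $|I_2(t,x)|\leq c\,e^{-\rho^2/(8t)}\,(H(2t,\cdot)\ast|g|)(x)$. Applying Lemma \ref{GMSI} at time $2t$ (with $|g|$ in place of $g$, which is legitimate since that lemma only uses $\|\,\cdot\,\|_{\alpha,p}$) gives $\|H(2t,\cdot)\ast|g|\|_\infty\leq c\,t^{-1/2}\|g\|_{\alpha,p}$, and hence $t^{1/2}|I_2(t,x)|\leq c_1\,e^{-\rho^2/(8t)}\|g\|_{\alpha,p}$ uniformly in $x$.

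For $I_1$ I would employ the pointwise bound $H(t,z)\leq c(|z|+t^{1/2})^{-n}$ and Hölder's inequality with conjugate exponents $p,p'$ and weight $|y|^\alpha$:
\[
|I_1(t,x)|\leq c\,\|g\|_{\alpha,p,\rho}\Big[\int_{B_\rho(x)}(|x-y|+t^{1/2})^{-np'}|y|^{-\alpha p'}\,dy\Big]^{1/p'}.
\]
Since the integrand is nonnegative, one may enlarge the domain to $\R^n$. The resulting integral is $x$-dependent, and here is where some care is needed: rescaling $w=y/t^{1/2}$ and $\tilde x=x/t^{1/2}$, and using the key identity $\alpha p'+np'-n=p'$ (a consequence of $\alpha=1-n/p$), one obtains
\[
\int_{\R^n}(|x-y|+t^{1/2})^{-np'}|y|^{-\alpha p'}\,dy=t^{-p'/2}\int_{\R^n}(|\tilde x-w|+1)^{-np'}|w|^{-\alpha p'}\,dw=:t^{-p'/2}\widetilde I(\tilde x).
\]
The hard part is checking $\widetilde I$ is bounded uniformly in $\tilde x\in\R^n$; this follows by splitting the $w$-integral into $\{|w|<|\tilde x|/2\}$ and its complement and using $n-\alpha p'>0$ and $np'+\alpha p'-n=p'>0$, which show that $\widetilde I(\tilde x)\to 0$ as $|\tilde x|\to\infty$, so by continuity $\widetilde I$ is globally bounded. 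Taking the $1/p'$-power yields $[\cdots]^{1/p'}\leq c\,t^{-1/2}$, so $t^{1/2}|I_1|\leq c_0\,\|g\|_{\alpha,p,\rho}$. Summing the bounds for $I_1$ and $I_2$ gives \eqref{stima_L_infinito_formula}.
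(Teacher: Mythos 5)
Your proof is correct, and the skeleton is the same as the paper's: split the convolution over $B_\rho(x)$ and its complement, and on the far region halve the Gaussian, $e^{-|x-y|^2/(4t)}\leq e^{-\rho^2/(8t)}e^{-|x-y|^2/(8t)}$, to extract the factor $e^{-\rho^2/(8t)}$. Where you genuinely diverge is in how the two resulting integrals are estimated. The paper applies H\"older in the split form $H=H^{1/p'}\cdot H^{1/p}$, pulls out $\dm H(t)\dm_\infty^{1/p}=c\,t^{-\frac n{2p}}$, and is then left with $\mathcal G(t,x)=H\ast|y|^{-\alpha p'}(t,x)$, the heat flow emanating from the homogeneous datum $|y|^{-\alpha p'}\in L(\frac n{\alpha p'},\infty)(\R^n)$, which it bounds by $c\,t^{-\frac{\alpha p'}2}$ via the $L(q,\infty)\to L^\infty$ smoothing estimate for the heat semigroup in Lorentz spaces. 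You instead keep the standard H\"older pairing, reduce the near field to $H^{p'}\ast|y|^{-\alpha p'}$, and evaluate it by scaling together with the identity $np'+\alpha p'-n=p'$; this is essentially the computation \rf{PELD} already carried out in the proof of Lemma\,\ref{GMWP} (there with the factor $|x|^{-\alpha p'}$ retained), so your argument is elementary and self-contained, at the price of the uniform-boundedness check for $\widetilde I(\tilde x)$, which you handle correctly. Your treatment of the far field, absorbing $\ov H(t,\cdot)=2^{\frac n2}H(2t,\cdot)$ and invoking Lemma\,\ref{GMSI} at time $2t$, is a clean shortcut the paper does not take (it repeats the H\"older/Lorentz argument with $\ov H$). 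Both routes yield constants $c_0$, $c_1$ in \rf{K(t,ro)} independent of $t$, $\rho$, $x$ and $g$, as required.
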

\begin{proof}Let $\rho > 0$. We set 
\[\ov{H}(t,x):=\sfrac{1}{(4\pi t)^{\frac{n}{2}}}e^{-\frac{|x|^2}{8t}}\,.\]
Applying the H\"older inequality, we get:
\begin{align*}
|H\ast g(t,x)|&\leq\sfrac{c_0}{t^{\frac{n}{2p}}}\bigg[\int_{B_\rho(x)}\hskip-0.3cm \sfrac{H(t,x-y)}{|y|^{\alpha p'}}\,dy\bigg]^{\frac{1}{p'}}\dm g\dm _{\alpha,p,\rho}+ c_1\,\sfrac{e^{-\frac{\rho ^2}{8t}}}{t^{\frac{n}{2p}}}\bigg[\int_{|x-y|>\rho} \hskip-0.3cm\sfrac{\ov H(t,x-y)}{|y|^{\alpha p'}}\,dy\bigg]^{\frac{1}{p'}}\dm g\dm _{\alpha,p}\\&\leq \sfrac{c_0}{t^{\frac{n}{2p}}}\bigg[\int_{\R^n} \sfrac{H(t,x-y)}{|y|^{\alpha p'}}\,dy\bigg]^{\frac{1}{p'}}\dm g\dm _{\alpha,p,\rho}+ c_1\,\frac{e^{-\frac{\rho ^2}{8t}}}{t^{\frac{n}{2p}}}\bigg[\int_{\R^n} \sfrac{\ov H(t,x-y)}{|y|^{\alpha p'}}\,dy\bigg]^{\frac{1}{p'}}\dm g\dm _{\alpha,p}
\end{align*}
for all $(t,x)\in (0,T)\times \R^n$.\\
The functions
\begin{equation*}
\mathcal{G}(t,x):= \int_{\R^n} \sfrac{H(t,x-y)}{|y|^{\alpha p'}}\,dy\mbox{\; and\; }\ov{\mathcal{G}}(t,x):= \int_{\R^n} \sfrac{\ov H(t,x-y)}{|y|^{\alpha p'}}\,dy
\end{equation*}
are the solutions to the heat equation corresponding to the initial datum $|y|^{-\alpha p'}\in L(\frac{n}{\alpha p'},\infty)(\R^n)$. By the properties of the solution to the heat equation in Lorentz spaces (see, {\it e.g.},  \cite{Maremonti_lorentz} and \cite{yamazaki}) we deduce that
\begin{equation}\label{sol_calore_stima_weak}
\dm \mathcal{G}(t)\dm _{\infty}\leq c t^{-\frac{\alpha}{2}p'}\mbox{\; and \;}\dm \ov{\mathcal{G}}(t)\dm _{\infty}\leq c t^{-\frac{\alpha}{2}p'}, \quad \text{for all}\,\, t>0
\end{equation}
So, we get:
\begin{equation*}
t^{\frac{1}{2}}|H \ast g(t,x)|\leq c_0 \dm g\dm _{\alpha,p,\rho} + c_1e^{-\frac{\rho ^2}{8t}}\dm g\dm _{\alpha,p} \,,\mbox{ for all }(t, x)\in (0,T)\times\R^n\,,
\end{equation*}
with $c_0$ and $c_1$ positive constants independent of $t$, $\rho$ and $g$. Hence, via definition \rf{K(t,ro)},  estimate \eqref{stima_L_infinito_formula} holds.
\end{proof}
\begin{lemma}\label{PPLF}{\sl
Let $g\in L^{p}_{\alpha}(\R^n)$. Then
\begin{equation}
t^{\frac{n}{2p}}|x|^{\alpha}|H\ast g(t,x)|\leq K_g(t,\rho)\,,\mbox{ for all }(t,x)\in(0,T)\times\R^n\,, 
\end{equation}
where $K_g(t, \rho)$ is definied in \eqref{K(t,ro)}.}
\end{lemma}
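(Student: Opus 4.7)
The plan is to replicate the proof of Lemma\,\ref{GMWP} but with the additional splitting used in Lemma\,\ref{stima_L_infinito}, so as to produce the refined right-hand side $K_g(t,\rho)$ in place of the crude $c\dm g\dm_{\alpha,p}$.

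First I would fix $\rho>0$ and write
\[
H\ast g(t,x)=\int_{B_\rho(x)}H(t,x-y)g(y)\,dy+\int_{|x-y|>\rho}H(t,x-y)g(y)\,dy=:I_1(t,x)+I_2(t,x),
\]
and introduce, as in Lemma\,\ref{stima_L_infinito}, the kernel $\ov H(t,z):=(4\pi t)^{-n/2}e^{-|z|^2/(8t)}$. For $|x-y|>\rho$ one has the trivial bound $H(t,x-y)\leq e^{-\rho^2/(8t)}\ov H(t,x-y)$, which factors out the exponentially small multiplier.

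Second, I would apply H\"older's inequality with exponents $p,p'$ and the weight $|y|^{-\alpha}$ to each piece, exactly as in the proof of Lemma\,\ref{GMWP}:
\[
|I_1(t,x)|\leq\Bigl[\intl{\R^n}\sfrac{H^{p'}(t,x-y)}{|y|^{\alpha p'}}\,dy\Bigr]^{1/p'}\dm g\dm_{\alpha,p,\rho},\qquad
|I_2(t,x)|\leq e^{-\rho^2/(8t)}\Bigl[\intl{\R^n}\sfrac{\ov H^{p'}(t,x-y)}{|y|^{\alpha p'}}\,dy\Bigr]^{1/p'}\dm g\dm_{\alpha,p},
\]
where in $I_1$ the restricted integral has been enlarged to all of $\R^n$ at no cost. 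The crucial point is that the inner integrals are precisely of the form treated in \rf{PELD}. That computation (repeated verbatim for $\ov H$, which has the same Gaussian structure with a harmless change of constant) yields
\[
\bigl[H^{p'}\ast|y|^{-\alpha p'}(t,x)\bigr]^{1/p'}+\bigl[\ov H^{p'}\ast|y|^{-\alpha p'}(t,x)\bigr]^{1/p'}\leq c\,|x|^{-\alpha}t^{-\frac{n}{2p}},
\]
since $\frac{1}{p'}\cdot\frac{n}{2(p-1)}=\frac{n}{2p}$.

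Third, substituting these bounds into the splitting and multiplying through by $t^{n/(2p)}|x|^\alpha$ gives
\[
t^{\frac{n}{2p}}|x|^\alpha|H\ast g(t,x)|\leq c_0\dm g\dm_{\alpha,p,\rho}+c_1 e^{-\rho^2/(8t)}\dm g\dm_{\alpha,p}=K_g(t,\rho),
\]
for all $(t,x)\in(0,T)\times\R^n$, with constants $c_0,c_1$ independent of $t,\rho,x$ and $g$. There is no real obstacle here: the only nontrivial ingredient is the pointwise bound on the convolution $H^{p'}\ast|y|^{-\alpha p'}$, which has already been established inside the proof of Lemma\,\ref{GMWP}, and the verification that the same bound holds for $\ov H$ is immediate since $\ov H$ differs from $H$ only by a constant factor in the Gaussian exponent.
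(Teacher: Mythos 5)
Your proposal is correct and follows essentially the same route as the paper's own proof: the same splitting of the convolution over $B_\rho(x)$ and its complement with the modified kernel $\ov H$, the same weighted H\"older application producing $\dm g\dm_{\alpha,p,\rho}$ and $\dm g\dm_{\alpha,p}$ respectively, and the same appeal to the pointwise bound \rf{PELD} (whose extension to $\ov H$ is indeed immediate). The exponent arithmetic $\frac1{p'}\cdot\frac n{2(p-1)}=\frac n{2p}$ is verified correctly.
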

\begin{proof}
Let $\rho > 0$. We set 
\[\ov{H}(t,x):=\sfrac{1}{(4\pi t)^{\frac{n}{2}}}e^{-\frac{|x|^2}{8t}}\,.\]
For all $(t, x)$, we get
\begin{align*}
|H\ast g(t,x)|&\leq \int_{\R^n}H(t,x-y)|g(y)|\, dy \\&\leq \int_{|x-y|<\rho}\sfrac{H(t,x-y)}{|y|^{\alpha}}|g(y)||y|^{\alpha}\,dy 
+e^{-\frac{\rho^2}{8t}}\int_{|x-y|>\rho}\sfrac{\ov{H}(t,x-y)}{|y|^{\alpha}}|g(y)||y|^{\alpha}\,dy\\& \leq c_0 \,\Big[\int_{\R^n}\sfrac{H(t,x-y)^{p'}}{|y|^{\alpha p'}}\Big]^{\frac{1}{p'}}\dm g\dm_{\alpha,p,\rho} + c_1\, e^{\frac{\rho^2}{8t}}\Big[\int_{\R^n}\sfrac{\ov H(t,x-y)^{p'}}{|y|^{\alpha p'}}\Big]^{\frac{1}{p'}}\dm g\dm_{\alpha,p}\,.
\end{align*}
Taking estimate \rf{PELD} into account, we arrive at
\begin{align*}
|x|^{\alpha}\,t^{\frac{n}{2p}}\,|H\ast g(t,x)|\leq K_g(t,\rho)\,,\mbox{ for all }(t, x)\in (0,T)\times\R^n\,,
\end{align*}
that is the thesis.
\end{proof}
Now, in \eqref{eq_calore}, let us consider $u_0(x)\equiv 0$ and $a(t,x)$ and $b(t,x)$ divergence free vector function in a weak sense.\par
It's well known that the solution $v(t,x)$ of this problem has the following representation:
\be\label{OEAB}
v(t,x)=\int_0^t\int_{\R^n}\n E(t-\tau,x-y)a \otimes b(\tau,y)\,dy\,d\tau= \n E \ast a\otimes b (t,x)
\ee
Let us provide some a priori estimates:
\begin{lemma}\label{PPNLF}
{\sl
Assume  in \rf{OEAB}
\[\sup_{(0,t)}\tau^{\frac{1}{2}}\dm a\dm_{\infty}+\sup_{(0,t)}\tau^{\frac{n}{2p}}\sup_{\R^n}|y|^{\alpha}|b(\tau,y)|< \infty.\]
Then there exists a constant $c$ independent of $a$ and $b$ such that
\begin{equation}
|x|^{\alpha}\,t^{\frac{n}{2p}}|\n E \ast a\otimes b(t,x)|\leq c\,\sup_{(0,t)}\tau^{\frac{1}{2}}\dm a\dm_{\infty}\,\sup_{(0,t)}\tau^{\frac{n}{2p}}\sup_{\R^n}|y|^{\alpha}|b(\tau,y)|\,.
\end{equation}
}
\end{lemma}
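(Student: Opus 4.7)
The plan is to derive the pointwise estimate directly from the integral representation \eqref{OEAB} by pairing the Oseen tensor estimate \eqref{prop_tens_oseen_derivate} with the hypothesized decay of $a$ and $b$, and then carrying out the space--time integration by a dyadic split in $y$ and a scaling argument in $\tau$.

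Set $A := \sup_{(0,t)} \tau^{1/2}\dm a(\tau)\dm_\infty$ and $B := \sup_{(0,t)} \tau^{n/(2p)} \sup_{y \in \R^n} |y|^\alpha |b(\tau,y)|$. From \rf{prop_tens_oseen_derivate} we have $|\nabla E(s,z)| \leq c(|z|+s^{1/2})^{-n-1}$, hence from \rf{OEAB}
\[
|\nabla E \ast a \otimes b(t,x)| \leq cAB \intll0t \tau^{-\frac12 - \frac n{2p}} \intl{\R^n} \big(|x-y| + (t-\tau)^{\frac12}\big)^{-n-1} |y|^{-\alpha}\, dy\, d\tau.
\]
Thus the goal reduces to showing that $t^{n/(2p)}|x|^\alpha$ times the above double integral is bounded by a constant.

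First I would split the spatial integral along $\{|y| < |x|/2\}$ versus $\{|y| \geq |x|/2\}$. On the former, $|x-y| \geq |x|/2$, and since $\alpha = 1 - n/p < n$, the singularity of $|y|^{-\alpha}$ at the origin is integrable, giving the bound $c|x|^{n-\alpha} (|x| + (t-\tau)^{1/2})^{-n-1}$. On the latter, $|y|^{-\alpha} \leq c|x|^{-\alpha}$ and a standard change of variable yields $\intl{\R^n}(|x-y| + (t-\tau)^{1/2})^{-n-1}\, dy \leq c(t-\tau)^{-1/2}$. Multiplying by $|x|^\alpha t^{n/(2p)}$, the ``far'' contribution becomes
\[
c\, t^{\frac n{2p}} \intll0t \tau^{-\frac12 - \frac n{2p}} (t-\tau)^{-\frac12}\, d\tau = c\, B\big(\tfrac12 - \tfrac n{2p},\tfrac12\big),
\]
which is finite thanks to $p > n$.

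For the ``near'' contribution, after multiplying by $|x|^\alpha t^{n/(2p)}$ one obtains
\[
t^{\frac n{2p}} |x|^n \intll0t \tau^{-\frac12 - \frac n{2p}} (|x| + (t-\tau)^{\frac12})^{-n-1}\, d\tau.
\]
Here I would rescale by $\tau = t\sigma$ and introduce the dimensionless ratio $r := |x|/t^{1/2}$; the resulting integral reduces to
\[
r^n \intll01 \sigma^{-\frac12 - \frac n{2p}}(r + (1-\sigma)^{\frac12})^{-n-1}\, d\sigma,
\]
which must be bounded uniformly in $r \geq 0$. For $r \geq 1$ one simply uses $(r+(1-\sigma)^{1/2})^{-n-1} \leq r^{-n-1}$, leaving $c/r$. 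For $r \leq 1$, I would split $\sigma \in [0,\frac12]$ (where $(1-\sigma)^{1/2}$ is bounded below and the $\sigma$-factor integrates) and $\sigma \in [\frac12,1]$ (where the $\sigma$-factor is bounded and the substitution $u = (1-\sigma)^{1/2}$ reduces the inner integral to $\int_0^{1/\sqrt2}(r+u)^{-n-1}\, 2u\, du$, which is $O(r^{1-n})$ so that $r^n$ times it is $O(r)$).

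The only delicate step is verifying the uniform boundedness of the rescaled near-field integral: the scaling exponents $-\tfrac12-\tfrac{n}{2p}$ in $\sigma$ and $-(n+1)$ from the Oseen kernel conspire with $\alpha = 1 - n/p$ precisely so that $t$ cancels out. Once this bookkeeping is confirmed, summing the two contributions yields the stated estimate.
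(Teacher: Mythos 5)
Your proof is correct and follows essentially the same route as the paper: the same split of $\R^n$ at $\{|y|=|x|/2\}$, the same Oseen kernel bound from \rf{prop_tens_oseen_derivate}, and the identical far-field computation ending in the Beta function $B\big(\tfrac12-\tfrac n{2p},\tfrac12\big)$. The only divergence is in the near-field time integral, where the paper simply decouples the variables via $(|x-y|+(t-\tau)^{1/2})^{-n-1}\leq c\,|x|^{-n}(t-\tau)^{-1/2}$ and lands on the same Beta integral, whereas your rescaling to $r=|x|/t^{1/2}$ with the case analysis $r\gtrless 1$ is correct but does more work than necessary.
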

\begin{proof}
We operate the following decomposition  of $\R^n$
\[\R^n=\big\{\,y\,:\,|y|\leq\sfrac{|x|}{2}\big\} \cup \big\{\,y\,:\,|y|>\sfrac{|x|}{2}\big\}\,.\]
Thus, we have
\begin{align*}
|\n E \ast a\otimes b(t,x)|&\leq c\, \int_0^t\int_{\R^n}\sfrac{|a(\tau,y)||b(\tau,y)|}{(|x-y|+(t-\tau)^{\frac{1}{2}})^{n+1}}\,dy\,d\tau\\&= c\!\int_0^t\!\int_{|y|<\frac{|x|}{2}}\!\!\sfrac{|a(\tau,y)||b(\tau,y)|}{(|x-y|+(t-\tau)^{\frac{1}{2}})^{n+1}}\,dy\,d\tau
+  c\!\int_0^t\!\int_{|y|>\frac{|x|}{2}}\!\!\sfrac{|a(\tau,y)||b(\tau,y)|}{(|x-y|+(t-\tau)^{\frac{1}{2}})^{n+1}}\,dy\,d\tau\\&=:c\,(I_1+I_2)\,.
\end{align*}
We set
\[D:=\sup_{(0,t)}\tau^{\frac{1}{2}}\dm a\dm_{\infty}\,\sup_{(0,t)}\tau^{\frac{n}{2p}}\sup_{\R^n}|y|^{\alpha}|b(\tau,y)|\,.\] 
Since $|x-y|\geq |x| -|y|\geq \frac{|x|}{2}$ for $|y|<\frac{|x|}{2}$, then  for $I_1$ we get
\begin{align*}
I_1\leq {D}{|x|^{-n}}\int_0^t \tau^{-\frac{1}{2}-\frac{n}{2p}}(t-\tau)^{-\frac{1}{2}}\int_{|y|<\frac{|x|}{2}}{|y|^{-\alpha}}\,dy\,d\tau\leq {\ov{c}\,D}{|x|^{-\alpha}\, t^{-\frac{n}{2p}}}\,.
\end{align*}
For $I_2$ we have
\begin{align*}
I_2 &\leq {\ov{c}\, D}{|x|^{-\alpha}}\int_0^t {\tau^{-\frac{1}{2}-\frac{n}{2p}}}\bigg(\int_{|y|>\frac{|x|}{2}}{(|x-y|+(t-\tau)^{\frac{1}{2}})^{-n-1}}\,dy\bigg)\,d\tau\VSE\leq {\ov{c}\, D}{|x|^{-\alpha}}\int_0^t {\tau^{-\frac{1}{2}-\frac{n}{2p}}(t-\tau)^{-\frac{1}{2}}}\,d\tau\leq {\ov{c}\, D}{|x|^{-\alpha}\, t^{-\frac{n}{2p}}}\,.
\end{align*}
Recalling the expression of $D$, we arrive at
\begin{align*}
|x|^{\alpha}\,t^{\frac{n}{2p}}|\n E \ast a\otimes b(t,x)|\leq c\,\sup_{(0,t)}\tau^{\frac{1}{2}}\dm a\dm_{\infty}\cdot\sup_{(0,t)}\tau^{\frac{n}{2p}}\sup_{\R^n}|y|^{\alpha}|b(\tau,y)|
\end{align*}
that is the thesis.
\end{proof}
\begin{lemma}\label{stim_termine_convettivo_inf}
{\sl Assume in \rf{OEAB}\begin{equation*}
\sup_{(0,T)}t^{\frac{1}{2}}\dm a(t)\dm _{\infty}<\infty,\quad \sup_{(0,T)}t^{\frac{n}{2}(\frac{1}{n}-\frac{1}{q})}\dm b(t)\dm _{q}<\infty
\end{equation*}
Then, there exists a constant $c>0$, independent of $a(t,x)$ and $b(t,x)$, such that
\begin{equation} \label{stima_lemma_convettivo_inf}
t^{\frac{1}{2}}\dm \nabla E \ast a \otimes b\dm _{\infty}\leq c\,\sup_{(0,t)} \tau^{\frac{1}{2}}\dm a(\tau)\dm _{\infty}\cdot \sup_{(0,t)}\tau^{\frac{n}{2}(\frac{1}{n}-\frac{1}{q})}\dm b(\tau)\dm _q
\end{equation}
for all $t\in(0,T)$.}
\end{lemma}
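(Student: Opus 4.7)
The plan is to bound the convolution pointwise and then factor the time integral into a Beta-type integral that can be controlled uniformly in $x$. First, starting from the Oseen representation \rf{OEAB} and using the kernel estimate \rf{prop_tens_oseen_derivate} with $h=1$, $k=0$, I would write
\[
|\nabla E\ast a\otimes b(t,x)|\leq c\int_0^t\!\int_{\R^n}\sfrac{|a(\tau,y)|\,|b(\tau,y)|}{(|x-y|+(t-\tau)^{\frac12})^{n+1}}\,dy\,d\tau.
\]
Pull the $L^\infty$-norm of $a$ out and then apply H\"older's inequality in $y$ with exponents $q$ and $q'$ to obtain
\[
|\nabla E\ast a\otimes b(t,x)|\leq c\int_0^t \dm a(\tau)\dm_\infty\,\dm b(\tau)\dm_q\,\bigg(\int_{\R^n}(|x-y|+(t-\tau)^{\frac12})^{-(n+1)q'}\,dy\bigg)^{\!\frac1{q'}}\!d\tau.
\]

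Second, the inner integral is computed by the change of variable $y=x+(t-\tau)^{\frac12}w$; since $q>p>n$ guarantees $(n+1)q'>n$, the integral converges and yields, after a short computation,
\[
\bigg(\int_{\R^n}(|x-y|+(t-\tau)^{\frac12})^{-(n+1)q'}\,dy\bigg)^{\!\frac1{q'}}\leq c\,(t-\tau)^{\frac{n/q'-(n+1)}{2}}=c\,(t-\tau)^{-\frac12-\frac{n}{2q}},
\]
uniformly in $x\in\R^n$.

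Third, invoke the sup-bounds in the hypotheses. Setting $A:=\sup_{(0,t)}\tau^{\frac12}\dm a(\tau)\dm_\infty$ and $B:=\sup_{(0,t)}\tau^{\frac{n}{2}(\frac1n-\frac1q)}\dm b(\tau)\dm_q$, and noting that $\frac{n}{2}(\frac1n-\frac1q)=\frac12-\frac{n}{2q}$, the product $\dm a(\tau)\dm_\infty\dm b(\tau)\dm_q$ is dominated by $AB\,\tau^{-1+\frac{n}{2q}}$. Substituting into the previous estimate gives
\[
|\nabla E\ast a\otimes b(t,x)|\leq cAB\int_0^t\tau^{-1+\frac{n}{2q}}(t-\tau)^{-\frac12-\frac{n}{2q}}\,d\tau.
\]

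Fourth, the remaining integral is evaluated by scaling $\tau=ts$: it equals $t^{-\frac12}B\!\left(\frac{n}{2q},\frac12-\frac{n}{2q}\right)$, where the Beta function is finite precisely because $q>n$ (so that both arguments are strictly positive). This yields
\[
t^{\frac12}\dm\nabla E\ast a\otimes b(t)\dm_\infty\leq c\,AB,
\]
which is \eqref{stima_lemma_convettivo_inf}. The only delicate point is the admissibility of the Beta integral, which reduces to checking $q>n$; this is automatic in our setting since $q>p>n$, so there is no genuine obstacle beyond careful bookkeeping of exponents.
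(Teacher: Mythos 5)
Your proof is correct and is exactly the standard argument that the paper delegates to \cite{CM-L3}: pointwise Oseen kernel bound \rf{prop_tens_oseen_derivate}, H\"older's inequality in space, and a Beta-function integral in time, with the genuinely needed admissibility condition $q>n$ correctly identified. One cosmetic remark: the convergence of the spatial integral $\int_{\R^n}(|x-y|+(t-\tau)^{1/2})^{-(n+1)q'}\,dy$ is automatic for every $q'\geq 1$, since $(n+1)q'>n$ always holds; the restriction $q>n$ is only truly required to make both arguments of the Beta function positive in the final time integral.
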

\begin{proof}
The proof is analogous to that contained in \cite{CM-L3} with a trivial adaptation to our setting.
\end{proof}
\begin{lemma}\label{stima_termine_convettivo_q}
{\sl Assume in \rf{OEAB} $$\displaystyle \sup_{(0,T)}t^{\frac{n}{2}(\frac{1}{n}-\frac{1}{q})}\dm a(t)\dm _{q} +\sup_{(0,T)}t^{\frac{n}{2}(\frac{1}{n}-\frac{1}{q})}\dm b(t)\dm _{q}<\infty\,.$$ Then  there exists a constant $c>0$, independent of $a(t,x)$ and $b(t,x)$, such that\begin{equation}
t^{\frac{n}{2}(\frac{1}{n}-\frac{1}{q})}\dm \nabla E \ast a \otimes b(t)\dm _q\leq c\, \sup_{(0,t)}\tau^{n(\frac{1}{n}-\frac{1}{q})}\dm a(\tau)\dm _q\dm b(\tau)\dm _q
\end{equation}
for all $t\in (0,T)$.}
\begin{proof}
By the Young inequality we have:
\begin{align*}
 \dm \nabla E \ast a \otimes b(t,x)\dm _q
&\leq  \int_0^t \dm \nabla E(t-\tau)\dm _{q'}\dm |a(\tau)||b(\tau)|\dm _{\frac{q
}{2}}\, d\tau\\&\leq  \sup_{(0,t)}\tau^{n(\frac{1}{n}-\frac{1}{q})}\dm |a(\tau)||b(\tau)|\dm _{\frac{q
}{2}}\int_0^t \sfrac{\dm \nabla E(t-\tau)\dm _{q'}}{\tau ^{n(\frac{1}{n}-\frac{1}{q})}}\, d\tau \\
& \leq c\,\sup_{(0,t)}\tau^{n(\frac{1}{n}-\frac{1}{q})}\dm |a(\tau)||b(\tau)|\dm _{\frac{q
}{2}} \int_0^t {\tau^{-n(\frac{1}{n}+\frac{1}{q})}(t-\tau)^{-\frac{1}{2}-\frac{n}{2q}}}\, d\tau\\&\leq c\, t^{-\frac{n}{2}(\frac{1}{n}-\frac{1}{q})}\sup_{(0,t)}\tau^{n(\frac{1}{n}-\frac{1}{q})}\dm a(\tau)\dm _q\dm b(\tau)\dm _q
 \end{align*}
which implies the thesis.
\end{proof}
\end{lemma}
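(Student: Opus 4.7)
The plan is to bound the nonlinear term in $L^q$ by reducing to a scalar Beta-type time integral, using Minkowski's integral inequality together with Young's convolution inequality in the spatial variable and the pointwise estimate for $\nabla E$ from \rf{prop_tens_oseen_derivate}.

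First, I would pull the $L^q_x$-norm inside the outer $\tau$-integral in \rf{OEAB} by Minkowski, and then apply Young's convolution inequality with exponents $(q',q/2;q)$ (note $1+\frac{1}{q}=\frac{1}{q'}+\frac{2}{q}$) to obtain
\[\dm \nabla E(t-\tau)\ast(a(\tau)\otimes b(\tau))\dm_q\leq \dm \nabla E(t-\tau)\dm_{q'}\,\dm a(\tau)\otimes b(\tau)\dm_{\frac{q}{2}}.\]
A direct radial integration based on \rf{prop_tens_oseen_derivate} then yields $\dm \nabla E(s)\dm_{q'}\leq c\,s^{-\frac{1}{2}-\frac{n}{2q}}$, the exponent being exactly that forced by parabolic scaling.

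Next, H\"older's inequality gives $\dm a(\tau)\otimes b(\tau)\dm_{q/2}\leq \dm a(\tau)\dm_q\dm b(\tau)\dm_q$. Setting $M(t):=\sup_{(0,t)}\tau^{n(\frac{1}{n}-\frac{1}{q})}\dm a(\tau)\dm_q\dm b(\tau)\dm_q$ and pulling $M(t)$ outside the integrand, I arrive at
\[\dm \nabla E\ast a\otimes b(t)\dm_q\leq c\,M(t)\intll0t(t-\tau)^{-\frac{1}{2}-\frac{n}{2q}}\,\tau^{-n(\frac{1}{n}-\frac{1}{q})}\,d\tau.\]

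Finally, the last integral is of Beta type: via the change of variable $\tau=ts$ it equals a constant times $t^{1-\frac{1}{2}-\frac{n}{2q}-n(\frac{1}{n}-\frac{1}{q})}=t^{-\frac{n}{2}(\frac{1}{n}-\frac{1}{q})}$. Multiplying through by $t^{\frac{n}{2}(\frac{1}{n}-\frac{1}{q})}$ gives the claim. The only subtlety requiring verification is that both exponents in the Beta integrand exceed $-1$, namely $-\frac{1}{2}-\frac{n}{2q}>-1$ and $-n(\frac{1}{n}-\frac{1}{q})>-1$; both reduce to $q>n$, which is consistent with the standing assumption $q>p>n$ in this section. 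Beyond this verification, the argument is a routine coupling of Young and H\"older in the convolution formula, so I do not foresee any substantive obstacle.
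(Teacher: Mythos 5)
Your argument is correct and is essentially the paper's own proof: both rest on Minkowski/Young's convolution inequality with exponents $(q',\frac q2;q)$, the bound $\dm \nabla E(s)\dm_{q'}\leq c\,s^{-\frac12-\frac n{2q}}$ obtained from \rf{prop_tens_oseen_derivate}, H\"older's inequality for the product, and the same Beta-type time integral (your $\tau^{-n(\frac1n-\frac1q)}$ is the correct exponent; the $\tau^{-n(\frac1n+\frac1q)}$ appearing in the paper's third display line is a sign typo). One tiny imprecision: of your two integrability conditions only $-\frac12-\frac n{2q}>-1$ actually requires $q>n$, since $-n(\frac1n-\frac1q)=-1+\frac nq>-1$ holds automatically.
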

\section{Proof of the main results}
For $m\geq 1$, we consider the following approximation scheme:
\begin{equation}\label{definizione_schema_iterativo}
u^m(t,x)=H\ast u_0(t,x)-\nabla_x E\ast (u^{m-1}\otimes u^{m-1})(t,x)
\end{equation}
with $u^0(t,x):= H\ast u_0$. We set
\begin{equation}\label{definizione_funzionale}
\widetilde{\tm} u(t)\tm:= \sup_{(0,t)} \tau^{\frac{1}{2}}\dm u(\tau)\dm _{\infty} + \sup_{(0,t)}\tau^{\frac{n}{2}(\frac{1}{n}-\frac{1}{q})}\dm u(\tau)\dm _q+\sup_{(0,t)}\sup_{\R^n}\tau^{\frac{n}{2p}}|y|^{\alpha}|u(\tau,y)|
\end{equation}
{and in the following we just write $K(t,\rho)$ in place of $K_{u_0}(t,\rho)$ defined in \rf{K(t,ro)}. We have}
\begin{lemma}\label{Lemma_8}{\sl 
Let $u_0 \in L^p_{\alpha}(\R^n)$. Then, there exist a constant $c>0$, independent of $u_0$, and $m \in \N$, such that for sequence \eqref{definizione_schema_iterativo}, for all $T>0$, we have
\begin{equation}\label{c}
\widetilde{\tm} u^m(t)\tm\leq K(t,\rho) + c\,\dm u_0\dm _{\alpha,p}^{1-\gamma}K(t,\rho)^{\gamma}+ c_2\,\widetilde{\tm} u^{m-1}(t)\tm\,,\mbox{ for all }t\in(0,T)\,.
\end{equation}}
\end{lemma}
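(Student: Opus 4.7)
The strategy is to apply the iteration formula
\[
u^m = H\ast u_0 \;-\; \nabla_x E\ast (u^{m-1}\otimes u^{m-1})
\]
and estimate each of the three sup-quantities entering the definition \rf{definizione_funzionale} of $\widetilde{\tm} u^m(t)\tm$ separately, splitting into the Stokes part $H\ast u_0$ and the bilinear convolution term.

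For the Stokes contribution I would invoke Lemma~\ref{stima_L_infinito} to control $\tau^{1/2}\dm H\ast u_0(\tau)\dm_\infty\leq K(t,\rho)$ and Lemma~\ref{PPLF} to control $\tau^{n/(2p)}\sup_{\R^n}|y|^\alpha|H\ast u_0(\tau,y)|\leq K(t,\rho)$. For the middle quantity $\tau^{\frac{n}{2}(\frac1n-\frac1q)}\dm H\ast u_0(\tau)\dm_q$, I use the interpolation of Lemma~\ref{IWSLq} with $r=\infty$, for which the exponent reduces to $\theta=1-\frac{n}{q}=\gamma$: the bound \rf{DCWS} from Theorem~\ref{SPWS} gives $\dm H\ast u_0(\tau)\dm_{\alpha,p}\leq\dm u_0\dm_{\alpha,p}$, while Lemma~\ref{stima_L_infinito} supplies $\dm H\ast u_0(\tau)\dm_\infty\leq \tau^{-1/2}K(\tau,\rho)$. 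Multiplying by $\tau^{\frac{n}{2}(\frac1n-\frac1q)}$ and passing to the sup on $(0,t)$ produces the contribution $c\,\dm u_0\dm_{\alpha,p}^{1-\gamma}K(t,\rho)^{\gamma}$, so that the linear part is bounded by $K(t,\rho)+c\,\dm u_0\dm_{\alpha,p}^{1-\gamma}K(t,\rho)^{\gamma}$.

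For the bilinear convolution $\nabla E\ast (u^{m-1}\otimes u^{m-1})$ I apply, with $a=b=u^{m-1}$, Lemma~\ref{stim_termine_convettivo_inf} to bound the $L^\infty$-component, Lemma~\ref{PPNLF} to bound the weighted pointwise component, and Lemma~\ref{stima_termine_convettivo_q} to bound the $L^q$-component. The hypotheses of these three lemmas for $u^{m-1}$ are exactly the assumption that $\widetilde{\tm} u^{m-1}(t)\tm<\infty$. Each right-hand side is a product of two sup-quantities that are dominated by $\widetilde{\tm} u^{m-1}(t)\tm$, so summing the three estimates yields the bound $c\,\widetilde{\tm} u^{m-1}(t)\tm^{\,2}$ for the total bilinear contribution; setting $c_2:=c\,\widetilde{\tm} u^{m-1}(t)\tm$ rewrites it in the linear form $c_2\,\widetilde{\tm} u^{m-1}(t)\tm$ and produces \rf{c}.

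The main delicate points are essentially bookkeeping: (i) checking that Lemma~\ref{IWSLq} with $r=\infty$ indeed produces the exponent $\gamma=1-n/q$ appearing in the statement, so that the power of $\dm u_0\dm_{\alpha,p}$ matches; (ii) verifying that the constants supplied by Lemmas~\ref{PPNLF}, \ref{stim_termine_convettivo_inf}, \ref{stima_termine_convettivo_q} are independent of $m$ and of $u_0$. No genuine analytic obstacle arises beyond this; the slightly unusual feature is that the bilinear bound is naturally quadratic in $\widetilde{\tm} u^{m-1}(t)\tm$, and the linear form of \rf{c} is obtained by absorbing one factor into the coefficient $c_2$, which consequently carries a mild dependence on the previous iterate rather than being a universal constant.
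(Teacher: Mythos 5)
Your proposal is correct and follows essentially the same route as the paper: the linear part is handled by Lemma~\ref{stima_L_infinito}, Lemma~\ref{PPLF} and the interpolation Lemma~\ref{IWSLq} with $r=\infty$ (giving $\theta=\gamma=1-n/q$ against \rf{DCWS}), and the bilinear part by Lemmas~\ref{stim_termine_convettivo_inf}, \ref{PPNLF} and \ref{stima_termine_convettivo_q} with $a=b=u^{m-1}$. Your closing observation is also on target: the paper's own proof actually ends with the quadratic recursion \rf{approssimazione_funzionale}, $\widetilde{\tm}u^{m+1}\tm\leq K+c\dm u_0\dm_{\alpha,p}^{1-\gamma}K^{\gamma}+3c\,\widetilde{\tm}u^{m}\tm^{2}$, which is the form fed into Lemma~\ref{lemma_iterate} later, so the linear statement \rf{c} is indeed only obtained by absorbing one factor of $\widetilde{\tm}u^{m-1}\tm$ into $c_2$, exactly as you note.
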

\begin{proof}
Let $m=1$. From definition \eqref{definizione_schema_iterativo}, by virtue of Lemmas\,\ref{stima_L_infinito} and Lemma\,\ref{stim_termine_convettivo_inf}, for all $s>0$ and $\rho>0$, we get
\begin{align*}
s^{\frac{1}{2}}\dm u^1(s)\dm _{\infty}&\leq s^{\frac{1}{2}}\dm u^0 (s)\dm _{\infty} + c\,\Big(\sup_{(0,s)} \tau^{\frac{1}{2}}\dm u^0(\tau)\dm _{\infty}+\sup_{(0,s)}\tau^{\frac{n}{2}(\frac{1}{n}-\frac{1}{q})}\dm u^0(\tau)\dm _q\Big)^2 \\
&\leq K(s,\rho) + c\widetilde{\tm} u^0(s)\tm^2\,.
\end{align*}
Hence, being the right-hand side   an increasing function of $s$, on any interval $(0,t)$, we get 
$$\sup_{(0,t)} s^\frac12\dm u^1(s)\dm_\infty\leq K(t,\rho)+c\widetilde{\tm}u^0(t)\tm^2\,.$$
Applying   estimate \eqref{interpolazione} with $r=\infty$ for $u^0$ and Lemma\,\ref{stima_termine_convettivo_q} for the convective term, we have:
\begin{align*}
s^{\frac{n}{2}(\frac{1}{n}-\frac{1}{q})}\dm u^1(s)\dm _q &\leq s^{\frac{n}{2}(\frac{1}{n}-\frac{1}{q})}\dm u^0(s)\dm _q + c\,\Big( \sup_{(0,s)}\tau^{\frac{n}{2}(\frac{1}{n}-\frac{1}{q})}\dm u^0(\tau)\dm _q\Big)^2\\
& \leq c\dm u_0\dm _{\alpha,p}^{1-\gamma}K(s,\rho)^{\gamma} + c\,\widetilde{\tm} u^0(s)\tm^2\,.
\end{align*}
The same argument lines employed for the estimate in $L^\infty$ lead to the following:
$$\sup_{ (0,t)}s^{\frac{n}{2}(\frac{1}{n}-\frac{1}{q})}\dm u^1(s)\dm _q  \leq c\dm u_0\dm _{\alpha,p}^{1-\gamma}K(t,\rho)^{\gamma} + c\,\widetilde{\tm} u^0(t)\tm^2\,.$$ Finally, 
taking in account Lemma \ref{PPLF} for the linear part and Lemma \ref{PPNLF} for the one involving Oseen tensor, we get
\begin{align*}
s^{\frac{n}{2p}}|x|^{\alpha}|u^1(s,x)|\leq s^{\frac{n}{2p}}|x|^{\alpha}|u^0(s,x)| + c\sup_{(0,s)}\tau^{\frac{1}{2}}\dm u^0(\tau)\dm_{\infty}\sup_{(0,s)}\tau^{\frac{n}{2p}}\dm|y|^{\alpha}u^0(\tau)\dm_\infty\leq K(s,\rho) + c \,\widetilde{\tm} u^0(s) \tm\,.
\end{align*}Hence, on $(0,t)$, we get
$$\sup_{(0,t)}s^{\frac{n}{2p}}|x|^{\alpha}|u^1(s,x)|\leq K(t,\rho) + c \,\widetilde{\tm} u^0(t) \tm\,.$$
Summing last relations, we obtain
\begin{equation}\label{approssimazione_funzionale}
\widetilde{\tm} u^1(s)\tm\leq K(s,\rho)+  c\,\dm u_0\dm _{\alpha,p}^{1-\gamma}K(s,\rho)^{\gamma} + 3c\, \widetilde{\tm} u^0(s)\tm^2
\end{equation}
with constant $c$ independent of the datum $u_0$.   Now, let $m>1$ and let assume that $u^m$ satisfies the property \eqref{c}. We show that for $u^{m+1}$ estimate \eqref{c} still holds. Indeed, by inductive hypothesis, $u^m$ satisfies the assumptions of the Lemmas \ref{stima_L_infinito}-\ref{stima_termine_convettivo_q}. So, by the same computations done in the case $m=1$, we get
$$ s^{\frac{1}{2}}\dm u^{m+1}(s)\dm _{\infty}\leq K(s,\rho) + c\widetilde{\tm} u^{m}(s)\tm ^2$$
\[s^{\frac{n}{2}(\frac{1}{n}-\frac{1}{q})}\dm u^{m+1}(s)\dm _q\leq c\dm u_0\dm _{\alpha,p}^{1-\gamma}K(s,\rho)^{\gamma} +\,c\widetilde\tm u^m(s)\tm ^2\]
and
\[s^{\frac{n}{2p}}\,|x|^{\alpha}|u^{m+}(s,x)|\leq K(s,\rho) + c \,\widetilde\tm u^m (s)\tm\]
Considering of the last estimates the $\sup$ on $(0,t)$ and summing, we obtain
\[\widetilde\tm u^{m+1}(t)\tm \leq K(t,\rho)+  c\,\dm u_0\dm _{\alpha,p}^{1-\gamma}K(t,\rho)^{\gamma} + 3c\, \widetilde\tm u^m(t)\tm ^2\]
Therefore, \eqref{c} holds for all $m\in \N$. 
\end{proof}
\begin{lemma}\label{lemma_iterate}{\sl 
Let $\xi_0 >0$ and $c>0$. Let $\{\xi_m\}$ be a nonnegative sequence of real numbers such that:
\begin{equation*}
\xi_m\leq \xi_0+c\,\xi_{m-1}^2
\end{equation*}
Assume $1-4\xi_0>0$ and  $\xi_0\leq \xi$, where $\xi$ is the minimum solution of algebraic equation $c\xi^2-\xi+\xi_0=0$. Then $\xi_{m-1}\leq \xi$ for all $m\in \N$.}
\end{lemma}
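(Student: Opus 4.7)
The proof will be a straightforward induction on $m$, with the key structural observation that $\xi$ is by definition a fixed point of the map $\eta \mapsto \xi_0 + c\eta^2$. The condition ``$1-4\xi_0>0$'' (read as $1-4c\xi_0>0$ to ensure the discriminant of $c\xi^2-\xi+\xi_0=0$ is positive) guarantees that the equation has two distinct positive roots, and that the minimum one $\xi=\frac{1-\sqrt{1-4c\xi_0}}{2c}$ is well-defined; in particular $c\xi^2+\xi_0=\xi$.

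The base case is immediate: for $m=1$, one has $\xi_{m-1}=\xi_0\leq\xi$ by hypothesis. For the inductive step, assume $\xi_{m-1}\leq\xi$. Using the recursion and monotonicity of the square on nonnegative numbers, I would estimate
\[
\xi_m \leq \xi_0+c\,\xi_{m-1}^2 \leq \xi_0+c\,\xi^2.
\]
Since $\xi$ solves $c\xi^2-\xi+\xi_0=0$, the right-hand side equals $\xi$, so $\xi_m\leq\xi$. By induction $\xi_{m-1}\leq\xi$ for all $m\in\N$.

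There is no genuine obstacle here. The only subtlety worth flagging explicitly is the role of the two hypotheses: the discriminant condition $1-4c\xi_0>0$ is what makes $\xi$ exist as a real number (and makes the statement nonvacuous), while the hypothesis $\xi_0\leq\xi$ is precisely what seeds the induction. The choice of the \emph{minimum} root is essential, because the map $\eta\mapsto\xi_0+c\eta^2$ has $\xi$ as an attracting fixed point from below only on $[0,\xi]$; above $\xi$ (up to the larger root) the map still leaves the interval invariant, but the bound we prove would be weaker, so picking the smaller root gives the sharp conclusion needed later when applying the lemma to the sequence $\widetilde{\tm}u^m(t)\tm$ produced by Lemma\,\ref{Lemma_8}.
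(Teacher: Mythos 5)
Your proof is correct. Note, however, that the paper does not actually prove Lemma\,\ref{lemma_iterate}: it simply defers to Lemma\,10.2 of Solonnikov \cite{solo}. Your self-contained induction is exactly the standard argument behind that citation, and it works: the base case is the hypothesis $\xi_0\leq\xi$, and the inductive step uses only nonnegativity of the sequence together with the fixed-point identity $\xi_0+c\,\xi^2=\xi$ satisfied by the minimal root. Your reading of the condition ``$1-4\xi_0>0$'' as $1-4c\,\xi_0>0$ is also the right one — the discriminant of $c\xi^2-\xi+\xi_0=0$ is $1-4c\,\xi_0$, and this is consistent with how the lemma is invoked in Lemma\,\ref{Lemma_10}, where the condition appears as $1-4c_2\big(K(t,\rho)+c\dm u_0\dm_{\alpha,p}^{1-\gamma}K(t,\rho)^{\gamma}\big)>0$ with $c_2$ playing the role of $c$. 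One small remark: the hypothesis $\xi_0\leq\xi$ is in fact automatic once the minimal root exists, since $\xi=\xi_0+c\,\xi^2\geq\xi_0$; so the only substantive assumption is the positivity of the discriminant, and your observation that the \emph{minimal} root is the one that makes the interval $[0,\xi]$ invariant under $\eta\mapsto\xi_0+c\eta^2$ is precisely the point needed for the uniform-in-$m$ bound \eqref{A(ro,t)}.
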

\begin{proof}
For the proof we remind to \cite{solo}, Lemma 10.2.
\end{proof}
\begin{lemma}\label{Lemma_10}{\sl 
Let $\{u^m\}$ be the sequence defined by \eqref{definizione_schema_iterativo} corrisponding to $u_0 \in L^{p}_{\alpha}$. Then there exists $T(u_0)>0$ such that, for all $\eta$, the sequence strongly converges in $\mathit{C}((\eta,T(u_0)\times \R^n)$ to a solution $u$ of \eqref{equazione_integrale}. In particular, for a suitable $\rho$ and for all $t\in [0,T(u_0))$, we get:
\begin{equation}\label{stima_sol_A(ro,t)}
\widetilde\tm u(t)\tm \leq \sfrac{\left[K(t,\rho)+  c\,\dm u_0\dm _{\alpha,p}^{1-\gamma}K(t,\rho)^{\gamma}\right]}{1+\left[1-4c_2\,\left(K(t,\rho)+  c\,\dm u_0\dm _{\alpha,p}^{1-\gamma}K(t,\rho)^{\gamma}\right)\right]^{\frac{
1}{2}}}\leq \sfrac{1}{4c_2}\,,
\end{equation}
and, in particular
\begin{equation}\label{proprieta_limite}
\lim_{t \to 0^+}\tm u(t)\tm =0,
\end{equation}
\begin{equation}\label{andamento_in_q}
t^{\frac{1}{2}}\dm u(t)\dm _{\infty}+t^{\frac{n}{2}(\frac{1}{n}-\frac{1}{q})}\dm u(t)\dm _q + t^{\frac{n}{2p}} \sup_{\R^n}|y|^{\alpha}|u(\tau,y)|\leq K(t,\rho)+  c\dm u_0\dm _{\alpha,p}^{1-\gamma}K(t,\rho)^{\gamma}
\end{equation}}
\end{lemma}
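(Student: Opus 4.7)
The plan is to deduce Lemma \ref{Lemma_10} by combining Lemma \ref{Lemma_8} with the algebraic iteration of Lemma \ref{lemma_iterate} to get a uniform-in-$m$ bound on $\widetilde\tm u^m(t)\tm$, then prove $\{u^m\}$ is Cauchy via a standard bilinear contraction on the differences $u^{m+1}-u^m$, and finally recover the vanishing limit \eqref{proprieta_limite} through a separate argument that exploits the freedom to let $\rho$ depend on $t$.

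First I would fix $\rho_0>0$ small enough that $c_0\dm u_0\dm_{\alpha,p,\rho_0}$ is as small as we wish, using absolute continuity of $\int|u_0|^p|y|^{\alpha p}\,dy$ with respect to the measure, then choose $T=T(u_0)>0$ so that $c_1 e^{-\rho_0^2/(8t)}\dm u_0\dm_{\alpha,p}$ is also small for every $t\in(0,T)$. These two choices make $\xi_0(t):=K(t,\rho_0)+c\dm u_0\dm_{\alpha,p}^{1-\gamma}K(t,\rho_0)^{\gamma}$ satisfy $4c_2\xi_0(t)<1$. The recurrence really obtained in the proof of Lemma \ref{Lemma_8} is quadratic, $\widetilde\tm u^m(t)\tm \leq \xi_0(t)+c_2\widetilde\tm u^{m-1}(t)\tm^{2}$, so Lemma \ref{lemma_iterate} delivers
\[
\widetilde\tm u^m(t)\tm \leq \xi(t):=\sfrac{2\xi_0(t)}{1+\sqrt{1-4c_2\xi_0(t)}}\,,\qquad m\in\N,\ t\in(0,T),
\]
which is exactly the right-hand side of \eqref{stima_sol_A(ro,t)}.

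Next I would prove Cauchyness in $\widetilde\tm\cdot\tm$. Setting $v^m:=u^{m+1}-u^m$, the iteration \eqref{definizione_schema_iterativo} gives $v^m=-\nabla_x E\ast(u^m\otimes v^{m-1}+v^{m-1}\otimes u^{m-1})$, and applying Lemmas \ref{PPNLF}, \ref{stim_termine_convettivo_inf} and \ref{stima_termine_convettivo_q} to the three pieces of $\widetilde\tm v^m(t)\tm$ yields
\[
\widetilde\tm v^m(t)\tm\leq C\bigl(\widetilde\tm u^m(t)\tm+\widetilde\tm u^{m-1}(t)\tm\bigr)\widetilde\tm v^{m-1}(t)\tm\leq 2C\xi(t)\widetilde\tm v^{m-1}(t)\tm.
\]
A slight further smallness assumption on $\rho_0$ and $T$ guarantees $2C\xi(t)\leq\tfrac12$, producing geometric decay. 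Since the factor $\tau^{-1/2}$ is bounded on $(\eta,T)$, convergence in $\widetilde\tm\cdot\tm$ forces uniform convergence on $(\eta,T)\times\R^n$, so $u^m\to u$ in $C((\eta,T)\times\R^n)$ for every $\eta>0$. Using the uniform weighted bounds to dominate the singularity at $\tau=0$ in the Oseen integral, one passes to the limit in \eqref{definizione_schema_iterativo} and identifies $u$ with a solution of \eqref{equazione_integrale}; a Fatou-type lower semicontinuity argument on each of the three pieces of $\widetilde\tm\cdot\tm$ transfers the bound $\widetilde\tm u^m(t)\tm\leq\xi(t)$ to $u$, giving \eqref{stima_sol_A(ro,t)}, and from $\tm u(t)\tm\leq\widetilde\tm u(t)\tm$ also \eqref{andamento_in_q}.

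The main obstacle is the limit \eqref{proprieta_limite}, which does not follow directly from the displayed bound at the fixed $\rho_0$: as $t\to 0^+$, $K(t,\rho_0)$ tends to the positive constant $c_0\dm u_0\dm_{\alpha,p,\rho_0}$, so $\xi(t)$ need not vanish. The remedy is that Lemma \ref{Lemma_8} and the attendant quadratic recurrence are valid for every $\rho>0$; hence, once the limit $u$ has been constructed, the bound $\widetilde\tm u(t)\tm\leq\xi(t,\rho)$ holds for every admissible $\rho$, i.e.\ every $\rho$ with $4c_2\xi_0(\rho)<1$. For $t$ near $0$ I would then let $\rho$ depend on $t$, e.g.\ $\rho(t)=t^{1/4}$, so that simultaneously $\rho(t)\to 0$ (making $\dm u_0\dm_{\alpha,p,\rho(t)}\to 0$ by absolute continuity) and $\rho(t)^2/t\to\infty$ (making $e^{-\rho(t)^2/(8t)}\to 0$), whence $K(t,\rho(t))\to 0$. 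Since $\gamma=1-n/q\in(0,1)$, for small $K$ we have $\xi_0\sim K^{\gamma}$, and the resulting bound $\widetilde\tm u(t)\tm\leq CK(t,\rho(t))^{\gamma}\to 0$ implies $\tm u(t)\tm\to 0$, which is \eqref{proprieta_limite}.
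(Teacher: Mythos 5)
Your proposal is correct and follows essentially the same route as the paper: a uniform-in-$m$ bound from the quadratic recurrence of Lemma \ref{Lemma_8} combined with Lemma \ref{lemma_iterate}, a geometric contraction on the successive differences $u^{m+1}-u^m$ estimated through Lemmas \ref{PPNLF}--\ref{stima_termine_convettivo_q}, and the limit \eqref{proprieta_limite} obtained by letting $\rho$ depend on $t$ so that both $\dm u_0\dm_{\alpha,p,\rho(t)}$ and $e^{-\rho(t)^2/(8t)}$ vanish (your explicit choice $\rho(t)=t^{1/4}$ is an instance of the paper's property (P)). Your observation that the recurrence is genuinely quadratic, and that the fixed-$\rho$ bound alone cannot yield \eqref{proprieta_limite}, matches the paper's argument exactly.
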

\begin{proof}
Since $u_0 \in L^p_{\alpha}(\R^n)$, for any $\varepsilon \in (0,\frac{1}{4c_2\, c_0})$, there exists $\rho=\rho(u_0,\varepsilon)$ such that $\dm u_0\dm _{\alpha,p,\rho}<\varepsilon$. For any such $\rho$ we consider:
\begin{equation}\label{istante_sup_esistenza}
t(\rho):=\sup\big\{\,t>0\,\big|\,\big[1-4c_2\big(K(t,\rho)+  c\dm u_0\dm _{\alpha,p}^{1-\gamma}K(t,\rho)^{\gamma}\big)\big]>0 \big\}
\end{equation}
and we denote by $\displaystyle T(u_0):=\sup_{\rho>0}t(\rho)$. Then, thanks to \eqref{approssimazione_funzionale} and Lemma \ref{lemma_iterate}, for a fixed $\rho$ and for any $t\in [0,T(u_0))$, uniformly in $m\in \N$, we get:
\begin{equation}\label{A(ro,t)}
\widetilde\tm u^m(t)\tm \leq \sfrac{2\left[K(t,\rho)+  c\dm u_0\dm _{\alpha,p}^{1-\gamma}K(t,\rho)^{\gamma}\right]}{1+\left[1-4c_2\,\left(K(t,\rho)+  c\dm u_0\dm _{\alpha,p}^{1-\gamma}K(t,\rho)^{\gamma}\right)\right]^{\frac{
1}{2}}}=: A(\rho,t)
\end{equation}
This estimate ensures that for all $t\in [0,T(u_0))$, the sequence $\{\tm u^m\tm \}$ is bounded.\\
Setting $w^m:=u^m - u^{m-1}$, by virtue of  \eqref{definizione_schema_iterativo}, we arrive at
\begin{equation*}
w^{m+1}(t,x)=\nabla_x E \ast (w^m\otimes u^m)(t,x)-\nabla _x E \ast (u^{m-1}\otimes w^m)(t,x)
\end{equation*}
From Lemmas \ref{PPNLF}-\ref{stima_termine_convettivo_q}, and recalling estimate \eqref{A(ro,t)}, we arrive at the following sequence:
\begin{equation}\label{assoluta_convergenza}
\widetilde\tm w^1(t)\tm \leq c A^2(\rho,t),\dots,\widetilde\tm w^m(t)\tm \leq 2^{m-1}c^m  A^{m+1}(\rho,t)\dots
\end{equation}
By definition of $T(u_0)$, we can deduce that $A(\rho,t)< 1/(2c)<1$ for all $t \in (0,T(u_0))$.\par
Now, we consider the sequence
\be\label{series}u^{m}-u^0:=\mbox{$\underset{i=1}{\overset m\sum}$}w^i \quad \text{for}\,\, m\in \N\,.\ee
Via  estimates \rf{assoluta_convergenza}, the series on the right-hand side of \rf{series}  is absolutely convergent with respect to the functional $\tm \cdot\tm $. So, we get the convergence of the sequence $\{u^m\}$ with respect to the functional $\tm \cdot\tm $. We denote by $u$ the limit that in particular enjoys of estimates \rf{stima_sol_A(ro,t)} and \rf{andamento_in_q}. Let us explicitly point out that, by virtue  of estimates \rf{stima_sol_A(ro,t)} realted to $L^q$-norm and $L^\infty$-norm, for $u(t, x)$ the convolution product $\n_xE*(u\otimes u)$ is well posed. Then,   taking the difference $u(t, x) - u^
m(t, x)$, by the convergence and
estimates \rf{andamento_in_q} and \rf{A(ro,t)} (that is uniform in $m$) applying dominated convergence theorem,  in the limit on $m$ the following integral representation easily holds:
\begin{equation}\label{rappresentazione_integrale_soluzione}
u(t,x)=H\ast u_0(t,x)-\nabla_x E\ast (u\otimes u)(t,x)
\end{equation}
Furthermore, the uniform convergence of such sequence, that are continous function on $(\eta,T)\times \R^{n}$, ensures that the limit $u$ is a continous function in $(t,x)\in (\eta,T)\times \R^{n}$. \par
Now, let's reconsider estimate \eqref{A(ro,t)}. As in \cite{CM-L3}, we note that the following property holds true:\par
\begin{itemize}
\item[(P)] for any sequence $\{t_p\} \to 0$, one can construct a sequence $\{\rho_p\}\to 0$ such that $
1-4c_2\big(K(t,\rho)+  c\dm u_0\dm _{\alpha,p}\big)>0$ holds, and along these sequences, we get \[\lim_{p\to \infty} A(\rho_p,t_p)=0\]  
\end{itemize}
Therefore, we can again apply the Lemma \ref{lemma_iterate} to obtain
\begin{equation}\label{stima(P)}
\widetilde\tm u^m(t_p)\tm \leq A(\rho_p,t_p)
\end{equation}
Applying this property, recalling the definition of the functional $\tm \cdot\tm $, by virtue of estimate \eqref{stima(P)}, we deduce
the limit property \eqref{proprieta_limite}. \end{proof}
\begin{lemma}\label{andamento_a_0_parte_non_lineare}{\sl Let $u$ a vector field enjoying \rf{andamento_sol_q} and $\ov u$ defined by formula \rf{RIT}.  \begin{itemize}\item
If $p \in (n,2n)$, then, for all $q\in (p,2n)$, we get 
\begin{equation}\label{prop_L_parte_non_lineare}
\dm \overline{u}(t)\dm _{\frac{q}{2}}\leq c(u_0)\, t^{\frac{n}{q}-\frac{1}{2}}\qquad \mbox{for all }t\in(0,T)\,;
\end{equation}
\item If $p=2n$, then we get
\begin{equation}\label{proprieta_L_weak_parta_non_lineare1}
\dm u(t)\dm_{(n,\infty)}<\infty\,,\;t\in(0,T), \mbox{ and }\lim_{t\to 0}\dm\overline{u}(t)\dm _{(n,\infty)}=0\,;
\end{equation}
\item Finally, if $p>2n$, then we get
\begin{equation}\label{proprieta_L_weak_parta_non_lineare}
\dm \overline{u}(t)\dm _{(\frac{np}{2(p-n)},\infty)}\leq {c(u_0)}\,t^{\frac{1}{2}-\frac{n}{p}}\qquad \mbox{for all }t\in(0,T).
\end{equation}
\end{itemize} }
\end{lemma}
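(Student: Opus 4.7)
The proof reduces to direct estimates of the representation
\[\bar u(t,x)=-\int_0^t\n E(t-\tau)*(u\otimes u)(\tau,x)\,d\tau,\]
using the bounds on $u$ already supplied by Theorem\,\ref{esistenza} and Lemma\,\ref{Lemma_10}, together with $\|\n E(s)\|_1\le Cs^{-1/2}$ (a consequence of \eqref{prop_tens_oseen_derivate}). For the first case, I would invoke Young's convolution inequality with $\n E(s)\in L^1$ and $\|u\otimes u(\tau)\|_{q/2}\le \|u(\tau)\|_q^2$, so that \eqref{andamento_sol_q} (which supplies $\|u(\tau)\|_q\le c(u_0)\tau^{-1/2+n/(2q)}$) yields
\[\|\bar u(t)\|_{q/2}\le C\int_0^t(t-\tau)^{-1/2}\|u(\tau)\|_q^2\,d\tau\le C'\,B\bigl(\tfrac12,\tfrac{n}{q}\bigr)\,t^{n/q-1/2},\]
which is exactly \eqref{prop_L_parte_non_lineare}; the constraints $q\in(p,2n)$ and $p\in(n,2n)$ ensure $q/2>1$ and convergence of the beta integral.

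For the two remaining cases I would use the weighted pointwise bound $|u(\tau,y)|\le\widetilde{\tm}u(\tau)\tm\,\tau^{-n/(2p)}|y|^{-\alpha}$ built into the functional \eqref{definizione_funzionale}, so that
\[|u\otimes u(\tau,y)|\le\widetilde{\tm} u(\tau)\tm^2\,\tau^{-n/p}|y|^{-2\alpha}.\]
Since $|y|^{-\beta}\in L(n/\beta,\infty)(\R^n)$ with a universal norm, this puts $u\otimes u(\tau)$ in $L(r,\infty)$ with $r=n/(2\alpha)=np/(2(p-n))$ and norm $\le C\widetilde{\tm} u(\tau)\tm^2\,\tau^{-n/p}$. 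The weak-type Young inequality $\|f*g\|_{(r,\infty)}\le C\|f\|_1\|g\|_{(r,\infty)}$ then gives
\[\|\bar u(t)\|_{(r,\infty)}\le C\int_0^t(t-\tau)^{-1/2}\widetilde{\tm} u(\tau)\tm^2\,\tau^{-n/p}\,d\tau.\]
For $p>2n$ the factor $\widetilde{\tm} u(\tau)\tm$ is controlled by the constant provided in Lemma\,\ref{Lemma_10} and the beta integral evaluates to $B(\tfrac12,1-\tfrac{n}{p})\,t^{1/2-n/p}$, producing \eqref{proprieta_L_weak_parta_non_lineare}. For $p=2n$ one has $n/p=1/2$, the beta integral is the constant $\pi$, and I would pull $\widetilde{\tm} u(\tau)\tm^2\le\widetilde{\tm} u(t)\tm^2$ out by monotonicity in $\tau$; the vanishing \eqref{proprieta_limite} then gives $\|\bar u(t)\|_{(n,\infty)}\to 0$ as $t\to 0^+$, and combining with the pointwise bounds of Lemmas\,\ref{stima_L_infinito} and \ref{PPLF} for $u^0=H*u_0$ provides the finiteness of $\|u(t)\|_{(n,\infty)}$ at each fixed $t\in(0,T)$.

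The main obstacle is precisely the $p=2n$ endpoint: the natural power law collapses to a constant, so a scaling argument cannot by itself produce the $t\to 0$ limit. The resolution, which I expect to be the only non-routine point, is to retain $\widetilde{\tm} u(\tau)\tm^2$ \emph{inside} the convolution integral and appeal to the qualitative vanishing from Lemma\,\ref{Lemma_10}, rather than replacing it by its uniform bound $c(u_0)$. The remaining cases reduce to routine beta-function bookkeeping and the identification $n/(2\alpha)=np/(2(p-n))$.
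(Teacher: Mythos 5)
Your argument is correct and rests on the same two inputs as the paper's proof: the $L^q$ decay $\dm u(\tau)\dm_q\le c(u_0)\,\tau^{-\frac12+\frac n{2q}}$ for the first case, and the weighted pointwise decay $|u(\tau,y)|\le c(u_0)\,\tau^{-\frac n{2p}}|y|^{-\alpha}$ for the other two. The first case coincides with the paper's computation (Minkowski plus Young with $\dm\n E(t-\tau)\dm_1\le c(t-\tau)^{-1/2}$ and a beta integral). For $p\ge 2n$ the mechanics differ: the paper feeds $|u(\tau,y)|^2\le c\,\tau^{-n/p}|y|^{-2\alpha}$ directly into the kernel bound \rf{prop_tens_oseen_derivate} and, splitting $\{|y|<|x|/2\}\cup\{|y|>|x|/2\}$ as in Lemma~\ref{PPNLF}, obtains the \emph{pointwise} majorant $|\ov u(t,x)|\le c(u_0)|x|^{-2\alpha}t^{\frac12-\frac np}$, from which the $L(\frac n{2\alpha},\infty)$ bounds are read off; you instead place $u\otimes u(\tau)$ in $L(\frac n{2\alpha},\infty)$ and invoke the weak-type Young inequality $L^1*L(r,\infty)\to L(r,\infty)$ (valid for $1<r<\infty$ by interpolation). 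Both routes are legitimate: the paper's yields the slightly stronger pointwise spatial decay, in keeping with its asymptotic theme, while yours is shorter and avoids re-running the kernel decomposition. At the endpoint $p=2n$ you identified the genuine issue correctly; the paper resolves it by extracting $\varepsilon$-smallness from \rf{limite_linfinito_soluzione} along sequences $t_m\to0$ in the pointwise bound, which is the same mechanism as your keeping $\widetilde{\tm}u(\tau)\tm^2$ inside the integral and using its monotone vanishing.

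One caveat: your justification of $\dm u(t)\dm_{(n,\infty)}<\infty$ for $p=2n$ does not close. Lemmas~\ref{stima_L_infinito} and \ref{PPLF} give $|u^0(t,x)|\le c\min\big(t^{-1/2},\,t^{-1/4}|x|^{-1/2}\big)$, which places $u^0(t)$ in $L(2n,\infty)$, not in $L(n,\infty)$; on a set of infinite measure $L(2n,\infty)\not\subset L(n,\infty)$. The paper's own proof has the same blemish (it bounds only $\ov u$ by $c(u_0)|x|^{-1}$ and then states the conclusion for $u$), and what is actually used downstream in Lemma~\ref{Dv} and Theorem~\ref{unicita} is the $L(n,\infty)$ control of $\ov u$ and of $w=\ov v-\ov u$ only, so the first assertion of \rf{proprieta_L_weak_parta_non_lineare1} is best read with $\ov u$ in place of $u$; as written, neither your argument nor the paper's establishes $u(t)\in L(n,\infty)$.
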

\begin{proof}
Let $p \in (n,2n)$ and $q\in (p,2n)$. In virtue of \eqref{andamento_sol_q} it holds:
\[\dm u(t)\dm _{q}\leq c\, t^{-\frac{1}{2}+\frac{n}{2 q}}\dm u_0\dm _{\alpha,p}\]
Employing the Minkowski inequality and  Young's inequality we get:
\[\dm \overline{u}(t)\dm _{\frac{q}{2}}\leq c\, \int_0^t \dm \nabla E(t-\tau)\dm _1\dm u\dm _{q}^2\,d\tau\leq c\, \dm u_0\dm _{\alpha,p}^2\int_0^t t^{-1+\frac{n}{q}}(t-\tau)^{-\frac{1}{2}}\leq c(u_0)\,t^{-\frac{1}{2}+\frac{n}{q}}.\]
\par
For $p=2n$,
using pointwise estimate \eqref{andamento_sol_q}, we get
\[|\overline{u}(t,x)|\leq  \frac{c}{|x|}\dm u_0\dm _{\alpha,p}^2(1+h\dm u_0\dm _{\alpha,p})^2\int_0^{t}{\tau^{-\frac{1}{2}}(t-\tau)^{-\frac{1}{2}}}\,d\tau\leq {c(u_0)}{|x|^{-1}}\,,\quad \mbox{for all }t\in(0,T)\,,\]
that implies
$$ \dm u(t) \dm_{(n,\infty)}\leq c(u_0) \,,\quad \mbox{for all }t\in(0,T)\,.$$
Also, for  $\varepsilon>0$, by virtue of limit property \eqref{limite_linfinito_soluzione}, for all $\{t_m\}$ such that $t_m \to 0$ as $m \to \infty$, there exists $\ov m$ so that
$$|u(t_m,x)|\leq \frac{\varepsilon}{t_m^{\frac14}|x|^{\frac12}}\dm u_0\dm _{\alpha,p}^2(1+h\dm u_0\dm _{\alpha,p})^2\qquad \mbox{for all }m>\ov m\,.$$
Using pointwise estimate \eqref{andamento_sol_q}, we get
\[|\overline{u}(t_m,x)|\leq  \frac{c\,\varepsilon}{|x|}\dm u_0\dm _{\alpha,p}^2(1+h\dm u_0\dm _{\alpha,p})^2\int_0^{t_m}{\tau^{-\frac{1}{2}}(t_m-\tau)^{-\frac{1}{2}}}\,d\tau\leq {c(u_0)}{|x|^{-1}}\varepsilon\,,\quad \mbox{for all }m>\ov m\,,\]
that implies
$$ \dm u(t_m) \dm_{(n,\infty)}\leq c(u_0)\varepsilon \,,\quad \mbox{for all }m>\ov m\,.$$
Letting $m\to\infty$, we deduce the property \eqref{proprieta_L_weak_parta_non_lineare1}.\par
Finally, if $p>2n$, employing again the pointwise estimate,  we arrive at
\[|\overline{u}(t,x)|\leq c\,\sfrac{\dm u_0\dm _{\alpha,p}^2(1+h\dm u_0\dm _{\alpha,p})^2}{|x|^{2-\frac{2n}{p}}}\int_0^t\!\!{\tau^{-\frac{n}{p}}(t-\tau)^{-\frac{1}{2}}}\,d\tau\leq {c(u_0)}{|x|^{-2+\frac{2n}{p}}}\,t^{\frac{1}{2}-\frac{n}{p}}\]
that leads to estimate \eqref{proprieta_L_weak_parta_non_lineare}.
\end{proof}

\begin{proof}[Proof of Theorem \ref{esistenza}]
In the hypotheses of Theorem \ref{esistenza}, by virtue of Lemma \ref{Lemma_8} and Lemma \ref{Lemma_10}, we establish a divergence free solution $u(t,x)$ solution to the integral equation \eqref{equazione_integrale} such that \eqref{andamento_sol_q} and \eqref{limite_linfinito_soluzione} hold. We can associate to $u(t,x)$ a pressure field defined by
\begin{equation}\label{def_pressione}
\pi(t,x):=-\nabla_{x_i} \int_{\R^n}\nabla_{x_j}\mathcal{E}(x-y)u_i(t,y)u_j (t,y)\,dy
\end{equation}
By Calderon-Zygmund theorem on singular integrals, we have:
\begin{equation}
\dm \pi(t)\dm _{\frac{q}{2}}\leq c\dm u(t)\dm _{{q}}^2 \quad \text{for all}\,\, t>0\,,
\end{equation}
from which we obtain the property \eqref{andamento_pressione}.
The limit property \eqref{weak_convergence1} is a consequence of the limit property \rf{convergenza_parte_lineare} for $u^0$  and Lemma\,\ref{andamento_a_0_parte_non_lineare} for $\ov u$. 
Finally, if we require $\dm u_0\dm _{\alpha,p}$ sufficiently small, since $\dm u_0\dm _{L_\alpha^p(B_{\rho})}\leq\dm u_0\dm _{\alpha,p}$ for all $\rho>0$, we can satisfy \eqref{istante_sup_esistenza} for arbitrary $\rho$ and then arbitrary $t$. This gives the stated global existence.
\end{proof}
Our uniqueness result is proved in norm $\dm \cdot\dm _{\frac{\ov q}2}$, where $\frac{\ov q}2$ is less than  $p$. This is an atypical approach to the uniqueness, that usually is proved in norms related to the  spaces of existence of the solutions. This approach to the uniqueness is possible because we relax our uniqueness just to nonlinear part, that is   the term \rf{RIT}. It is quite naturale to inquire why we need  this strategy. The strategy is employed because we have no condition of smallness on the data and contemporarily we need a norm for  $\ov u$ that goes to zero for $t\to0$. The {\it component} $\ov u$ is connected to the Oseen representation. Hence, it contains the nonlinear term. Evaluating $\ov u$ in $L^r$-norm, for some exponent $r>p$, due to  the nonlinear term we arrive at   estimate of the  square $L^{2r}$-norm  of the solution $u$. As a consequence the time integral is not  zero for $t\to0$. In order to around this difficulty we look for estimates in norm with exponent smaller than $p$. Hence considering norm of spaces that are off to the existence class. For our aims we need the following
\begin{lemma}\label{Dv}
Let $v(t,x)$ a solution to \eqref{problem} corrisponding to a initial datum $v_0$ and satisfying \eqref{weak_convergence1} and \eqref{andamento_sol_q}. Then, $v$ admits the decomposition:
\begin{equation}
v=v^0+\ov v\,,
\end{equation}
where $v^0=H\ast v_0$ and $\ov v=\n E \ast v\otimes v$. 
\end{lemma}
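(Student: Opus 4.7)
The plan is to replicate, in the nonlinear setting, the duality argument used to prove uniqueness for the Stokes problem in Theorem~\ref{SPWS}. Fix $\varphi_0\in\mathscr{C}_0(\R^n)$ and let $\varphi(\tau,x)$ be the Stokes solution with datum $\varphi_0$ from Lemma~\ref{SHE}; since $\varphi_0$ is divergence-free on $\R^n$, the pressure vanishes and $\varphi(\tau,x)=H(\tau)*\varphi_0(x)$, smooth and with good decay at infinity. Define the backward test function $\widehat\varphi(\tau,x):=\varphi(t-\tau,x)$ on $(0,t)\times\R^n$: it solves the backward Stokes system with terminal datum $\varphi_0$.

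The core step is to test the first equation of \eqref{problem} (written for $v$, legitimate because Remark~\ref{R-I} renders $v$ classical on $(\eta,T)\times\R^n$ for every $\eta>0$) against $\widehat\varphi$ over $(s,t)\times\R^n$ with $0<s<t$. The pressure term vanishes against the divergence-free $\widehat\varphi$; the time derivative of $v$ and the Laplacian combine with $\widehat\varphi_\tau+\Delta\widehat\varphi=0$ to leave only boundary contributions; the convective term integrates by parts (using $\nabla\cdot v=0$) to a term in $v\otimes v$ paired with $\nabla\widehat\varphi$. The upshot is
\[
(v(t),\varphi_0)-(v(s),\varphi(t-s))=\int_s^t(v\otimes v,\nabla\varphi(t-\tau))(\tau)\,d\tau.
\]

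Next I would pass to the limit $s\to 0^+$. On the right, the integrand is dominated by $|v|^2\,|\nabla\varphi(t-\tau)|$; the time singular bounds $\tau^{-1/2}\dm v(\tau)\dm_\infty$ and $\tau^{-1/2+n/(2q)}\dm v(\tau)\dm_q$ from \eqref{andamento_sol_q} make $|v|^2$ integrable near $\tau=0$ in a Lebesgue norm compatible with the smoothness of $\nabla\varphi(t-\cdot)$, whose mild singularity at $\tau=t$ is absorbed by the $L^{q/2}$-control on $v\otimes v$. On the left, I would split $(v(s),\varphi(t-s))=(v(s),\varphi(t))+(v(s),\varphi(t-s)-\varphi(t))$: the first piece tends to $(v_0,\varphi(t))$ by \eqref{weak_convergence1}, once one checks $\varphi(t)\in X(p)$ using \rf{stima_per_convergenza} (which places $\varphi(t)$ in $L^{p'}_{\alpha'}$) together with the smoothing properties of Lemma~\ref{SHE} (which place $\varphi(t)$ in the required $L^{q'}$ or Lorentz space, according to the three cases defining $X(p)$); the second piece is estimated by H\"older as $\dm v(s)\dm_q\cdot\dm\varphi(t-s)-\varphi(t)\dm_{q'}$, with $\dm v(s)\dm_q\lesssim s^{-1/2+n/(2q)}$ by \eqref{andamento_sol_q} and $\dm\varphi(t-s)-\varphi(t)\dm_{q'}=O(s)$ (since the difference equals $-\int_{t-s}^t\Delta H(\sigma)*\varphi_0\,d\sigma$), so the product tends to zero. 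Passing to the limit yields
\[
(v(t),\varphi_0)=(v_0,\varphi(t))+\int_0^t(v\otimes v,\nabla\varphi(t-\tau))(\tau)\,d\tau.
\]

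Finally I would identify both terms with their Oseen-kernel forms. Fubini gives $(v_0,\varphi(t))=(v_0,H(t)*\varphi_0)=(H(t)*v_0,\varphi_0)=(v^0(t),\varphi_0)$. For the nonlinear integral I would use the structure $E_{ij}(t,x)=-H(t,x)\delta_{ij}+D_{x_ix_j}\phi(t,x)$: the second summand is a pure gradient in $x$ and therefore vanishes when contracted against the divergence-free $\varphi_0$, so $E(t)*\varphi_0=-H(t)*\varphi_0=-\varphi(t)$; switching $x\leftrightarrow y$ in the convolution kernel (permitted by the evenness of $E$) and integrating by parts in $y$ then identifies $\int_0^t(v\otimes v,\nabla\varphi(t-\tau))\,d\tau$ with $(\bar v(t),\varphi_0)$ in the convention of \rf{RIT}. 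Since $\varphi_0\in\mathscr{C}_0(\R^n)$ is arbitrary, density of $\mathscr{C}_0$ together with the divergence-free character of $v-v^0-\bar v$ yield the pointwise decomposition $v=v^0+\bar v$. The hard point is the passage $s\to 0^+$: no a priori uniform $L^p_\alpha$-bound on $v(s)$ is available, so the weak convergence \eqref{weak_convergence1} must be used by splitting $(v(s),\varphi(t-s))$ and trading the singular bound on $\dm v(s)\dm_q$ against the strong continuity rate of $\varphi(t-s)\to\varphi(t)$ in $L^{q'}$; verifying $\varphi(t)\in X(p)$ case by case is the second delicate point.
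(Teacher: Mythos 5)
Your argument is correct, but it is organized differently from the paper's proof, and the comparison is instructive. The paper first \emph{defines} $\ov v:=\n E\ast v\otimes v$ as a solution of the auxiliary forced Stokes problem \eqref{Svs}, checks that $w:=v-v^0$ solves the same problem, and then proves $w-\ov v\equiv 0$ by testing the resulting \emph{homogeneous} Stokes system against the backward heat flow; since the term $(\ov v(s),\psi(t))$ and the norm $\dm\ov v(s)\dm_{\frac q2}$ (or its Lorentz analogues) must then be sent to zero as $s\to 0$, the paper is forced to invoke Lemma~\ref{andamento_a_0_parte_non_lineare} and to split into the three cases $p\in(n,2n)$, $p=2n$, $p>2n$. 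You instead test the Navier--Stokes equations for $v$ directly against $\widehat\varphi$, pass to the limit $s\to0$ using only \eqref{weak_convergence1} together with the trade-off between $\dm v(s)\dm_q\lesssim s^{-\frac12+\frac n{2q}}$ and the $O(s)$ continuity of $\varphi(t-s)$ in $L^{q'}$ (this is exactly the bound $s^{\frac12+\frac n{2q}}c(v_0)c(\psi_0)$ that also appears in the paper), and then identify the nonlinear term with $(\ov v(t),\varphi_0)$ via the observation that the potential part $D_{x_ix_j}\phi$ of the Oseen tensor annihilates solenoidal test functions, so that $E(t)\ast\varphi_0=-H(t)\ast\varphi_0$. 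What your route buys is that Lemma~\ref{andamento_a_0_parte_non_lineare} and the case analysis disappear from this lemma: the only control needed near $\tau=0$ is the integrability of $(v\otimes v,\n\varphi(t-\tau))$, i.e.\ of $\tau^{-1+\frac nq}(t-\tau)^{-\frac12}$, which is uniform in $p$. What it costs is the kernel-identification step and the verification that $\varphi(t)\in X(p)$, which you correctly flag and which follows from \eqref{stima_per_convergenza} together with the fact that $H(t)\ast\varphi_0$ lies in every $L^r$, hence in every Lorentz space $L(r,1)$, $r\in(1,\infty)$. Two minor points: the sign of your intermediate identity should be $-\int_s^t(v\otimes v,\n\varphi(t-\tau))\,d\tau$, consistent with the minus sign in \rf{RIT} (this does not affect the conclusion); and both proofs conclude in the same way, namely $(v(t)-v^0(t)-\ov v(t),\varphi_0)=0$ for all $\varphi_0\in\mathscr C_0(\R^n)$ plus solenoidality of the difference yields the pointwise identity through the Helmholtz decomposition of Theorem~\ref{HWD}.
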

\begin{proof}
Setting $v^0:= H\ast v_0$, we define $w=v-v^0$. Furthermore, we consider $\ov v$ a solution to the following problem:
\begin{equation}\label{Svs}
\begin{cases}
\ov v_t -\Delta \ov v + \n \pi_{\ov v}=-v\cdot \n v \qquad &\text{in} \quad (0,\infty)\times \R^n\,, \\
\nabla \cdot \ov v(t,x)=0 \qquad &\text{in} \quad (0,\infty)\times \R^n \,,\\
\ov v(0,x)=0 \qquad &\text{on}\quad  \{0\}\times \R^n\,.
\end{cases}
\end{equation}
We know that $\ov v $ has the representation
$$\ov v(t,x) = \int_0^t\n E (t-\tau,x-y)v \otimes v(\tau,y)\,d\tau$$
As is easily  to verify, $w$ is an other solution to \eqref{Svs}. We claim that $w\equiv \ov v$. Indeed, setting $\ov w = w- \ov v$, $\ov w$ satisfies
$$\ov w_t -\Delta \ov w +\n \pi_{\ov w}=0\, ,$$
where $\pi_{\ov w}= \pi_{v}-\pi_{\ov v}$. \par Let $\psi$ be the solution to problem \rf{eq_calore} with initial datum $\psi_0\in \mathscr C_0(\R^n)$ and $a=b\equiv 0$. For $t>0$, we set $\widehat{\psi}(\tau,x):=\psi(t-\tau,x)$, provided that $(\tau,x)\in (0,t)\times \R^n$. The function $\widehat\psi$ is a  solution backward in time on $(0,t)\times\R^3$ with $\widehat{\psi}(t,x)=\psi_0(x)$, and it enjoys properties \rf{RHE}-\rf{SP}. Then, for all $t>s>0$, $\ov w$ satisfies the following integral equation:
\be\label{CdR}
(\ov w(t), \psi_0)=(\ov w(s), \psi(t-s))\, .
\ee
Now we distinguish three cases:
\begin{itemize}
\item[1)] For $p\in(n,2n)$, taking the absolute value and applying H\"older inequality, we have:
\begin{align*}
|(\ov w(t), \psi_0)|&= |(\ov w(s), \psi(t-s))|=|(\ov w(s), \psi(t-s)-\psi(t))|+|(\ov w(s), \psi(t))| \\
& \leq\dm  w(s) \dm_q \dm \psi(t-s)-\psi(t)\dm_{q'}+ \dm \ov v(s) \dm_{\frac{q}{2}} \dm \psi(t-s)-\psi(t)\dm_{\frac{q}{q-2}}+|(\ov w(s), \psi(t))|\\
&\leq s^{\frac{1}{2}+\frac{n}{2q}}\, c(v_0)c(\psi_0)+ c(\psi_0) \dm \ov v(s)\dm_{\frac{q}{2}} +|(\ov w(s), \psi(t))|\,,
\end{align*}
Since $\ov w= w-\ov v$, by virtue of  the  limit property \eqref{weak_convergence1} for $w$ and Lemma\,\ref{andamento_a_0_parte_non_lineare} for $\ov v$, letting $s\to 0$, we deduce that:
$$ |(\ov w(t), \psi_0)|=0,\quad \mbox{for all }\psi_0 \in \mathscr{C}_0(\R^n)\,,$$
then, $\ov w=0$ on $\{t\}\times \R^n$ for all $t>0$. So, by definition of $w$ we conclude that $v=v^0+\ov v$\,.\\
\item[2)] We consider $p=2n$. Again, taking the absolute value in \eqref{CdR}, applying H\"older inequality and taking in account \eqref{SPLS}, we have
\begin{align*}
|(\ov w(t), &\psi_0)|= |(\ov w(s), \psi(t-s))|=|(\ov w(s), \psi(t-s)-\psi(t))|+|(\ov w(s), \psi(t))| \\
& \leq\dm  w(s) \dm_q \dm \psi(t-s)-\psi(t)\dm_{q'}+ \dm \ov v(s) \dm_{(n,\infty)} \dm \psi(t-s)-\psi(t)\dm_{(n',1)}+|(\ov w(s), \psi(t))|\\
&\leq s^{\frac{1}{2}+\frac{n}{2q}}\, c(v_0)c(\psi_0)+ \dm\psi_0\dm_{(n',1)}\dm \ov v(s)\dm_{(n,\infty)} +|(\ov w(s), \psi(t))|\,.
\end{align*}
Since $\ov w= w-\ov v$, by virtue of the  limit property \eqref{weak_convergence1} for $w$ and Lemma\,\ref{andamento_a_0_parte_non_lineare} for $\ov v$,
letting $s\to 0$, we deduce that:
$$ |(\ov w(t), \psi_0)|=0,\quad \mbox{for all }\psi_0 \in \mathscr{C}_0(\R^n).$$
\end{itemize}
As in the  previous case, we deduce that $v=v^0+\ov v$.
\item[3)] Finally, for $p>2n$, from \eqref{CdR}, by same argument lines, we have
\begin{align*}
&|(\ov w(t), \psi_0)|= |(\ov w(s), \psi(t-s))|=|(\ov w(s), \psi(t-s)-\psi(t))|+|(\ov w(s), \psi(t))| \\
& \leq\dm  w(s) \dm_q \dm \psi(t-s)-\psi(t)\dm_{q'}+ \dm \ov v(s) \dm_{(\frac{np}{2(p-n)},\infty)} \dm \psi(t-s)-\psi(t)\dm_{(\frac{np}{p(n-2)+2n},1)}+|(\ov w(s), \psi(t))|\\
&\leq s^{\frac{1}{2}+\frac{n}{2q}}\, c(v_0)c(\psi_0)+ c\, \dm \ov v(s)\dm_{(\frac{np}{2(p-n)},\infty)}\dm \psi_0\dm_{(\frac{np}{p(n-2)+2n},1)} +|(\ov w(s), \psi(t))|\,.
\end{align*}
Since $\ov w= w-\ov v$, by virtue of the limit property \eqref{weak_convergence1} for $w$ and Lemma\,\ref{andamento_a_0_parte_non_lineare} for $\ov v$,
letting $s\to 0$, we deduce the thesis also in this case. 
\end{proof} 
Now, we are  in a position to prove Theorem\,\ref{unicita} 
\begin{proof}[Proof of Theorem \ref{unicita}] Our result of uniqueness compare two solutions $u(t, x)$ and $v(t, x)$ where $u$ is a solution of Theorem\,\ref{esistenza} and $v$ enjoys the same regularity. By virtue of Lemma\,\ref{Dv}, solution $v$ admits a decomposition of the kind $v=u^0+\ov v$ where, depending on $p$, we  require to function  $\ov v$ the estimates \rf{prop_L_parte_non_lineare}, \rf{proprieta_L_weak_parta_non_lineare1} and \rf{proprieta_L_weak_parta_non_lineare}. \par 
We employ a duality argument as suggested in \cite{F} with a slight modification. 
Set $w(t,x):=v(t,x)- u(t,x)=\overline{v}(t, x)-\overline{u}(t, x)$ and $\pi_w:=\pi_v-\pi_u$. Recalling the regularity of $w$ for $t>0$, for all $t>s>0$, the pair $(w,\pi_w)$ solves the integral equation:
\begin{equation}\label{WFFW}\ba{ll}
(w(t),\phi(t))\hskip-0.2cm&\displ= (w(s),\phi(s))+ \int_s^t \big[(w,\phi_\tau+\Delta\phi)\VSE\hskip3.2cm+(w\cdot \nabla \phi  ,\overline{v})+(\overline{u} \cdot \nabla \phi,w)+(u^0\cdot \nabla \phi,w)+w\cdot\nabla\phi,u^0\big]d\tau\,,\ea
\end{equation} for all $\phi\in C([0,T);J^{q'}(\R^n))\cap L^{q'}(0,T;W^{2,q'}(\R^n))$ with $\phi_t\in L^{q'}(0,T;L^{q'}(\R^n))$, and $t>s>0$\,. Let $\psi$ be the solution to problem \rf{eq_calore} with initial datum $\psi_0\in \mathscr C_0(\R^n)$ and $a=b\equiv 0$. For $t>0$, we set $\widehat{\psi}(\tau,x):=\psi(t-\tau,x)$, provided that $(\tau,x)\in (0,t)\times \R^n$. The function $\widehat\psi$ is a  solution backward in time on $(0,t)\times\R^3$ with $\widehat{\psi}(t,x)=\psi_0(x)$, and it enjoys properties \rf{RHE}-\rf{SP}. Then, for all $t>s>0$, $w$ satisfies the following integral equation:
\begin{equation}\label{ultima}
(w(t),\psi_0)= (w(s),\widehat{\psi}(s))+ \int_s^t [(w\cdot \nabla \widehat{\psi}  ,\overline{v})+(w\cdot\n \widehat \psi, u^0)+(\overline{u} \cdot \nabla \widehat{\psi},w)+(u^0\cdot \nabla \widehat{\psi},w) ]\,d\tau.
\end{equation}
Now, in order to employ Lemma\,\ref{andamento_a_0_parte_non_lineare} we distinguish three cases: 
\begin{itemize}
\item[1)]Let $u_0 \in L^p_{\alpha}(\R^n)$ with $p\in (n,2n)$.  In this case, we can consider an exponent $\overline{q}\in (p,2n)$ in order to   employ \rf{prop_L_parte_non_lineare} with $\frac{\ov q}2$ for both the solutions. Hence, in particular we get $\displ\lim_{s\to0}\dm w(s)\dm_{\frac{\ov q}2}=0$. From \eqref{ultima} and from the estimate related to $\dm \nabla \widehat{\psi}\dm _{\frac{\overline{q}}{\overline{q} - 2}}$  we get
\be\label{ultima2}\ba{ll}
|(w(t),\psi_0)|\hskip-0.2cm&\displ\leq |(w(s),{\psi}(t-s)| + c\sup_{(s,t)}[\tau^{\frac{1}{2}}(\dm \overline{u}\dm _{\infty}+ \dm \overline{v}\dm _{\infty}+\dm u^0\dm _{\infty})] \notag\\
&\displ\hskip2cm\times \sup_{(s,t)}\dm w(\tau)\dm _{\frac{\overline{q}}{2}}\int_s^t \tau^{-\frac{1}{2}}\dm \nabla \widehat{\psi}\dm _{\frac{\overline{q}}{\overline{q}-2}}\,d\tau \notag\\
&\displ\leq \dm w(s)\dm _{\frac{\overline{q}}{2}}\dm {\psi_0}\dm _{\frac{\overline{q}}{\overline{q}-2}} + c\sup_{(s,t)}[\tau^{\frac{1}{2}}(\dm \overline{u}\dm _{\infty}+ \dm \overline{v}\dm _{\infty}+\dm u^0\dm _{\infty})] \notag\\
&\displ\hskip2cm\times\dm \psi_0\dm _{\frac{\overline{q}}{\overline{q}-2}}\sup_{(s,t)}\dm w(\tau)\dm _{\frac{\overline{q}}{2}}\int_s^t \tau^{-\frac{1}{2}}(t-\tau)^{-\frac{1}{2}}\,d\tau\,,
\ea\ee
for all $t \in [0,T(u)) \cap [0,T(v))$. Since $\psi_0$ is arbitrary, letting $s \to 0$, we obtain
\[ \dm w(t)\dm _{\frac{\overline{q}}{2}}\leq  c\sup_{(0,t)}[\tau^{\frac{1}{2}}(\dm \overline{u}\dm _{\infty}+ \dm \overline{v}\dm _{\infty}+\dm u^0\dm _{\infty})]\sup_{(0,t)}\dm w(\tau)\dm _{\frac{\overline{q}}{2}}\]
From the validity of the first limit property in \eqref{limite_linfinito_soluzione}, one easily deduces the uniqueness on some interval $(0,\delta]$. It remains to discuss the uniqueness for $t\geq \delta$.\par
Since for $t>0$ we have $w$ regular,  we are going to consider  formula  \eqref{ultima} with $s=\delta$. Since $\psi$ enjoys \rf{RHE}, and $\dm w(\delta)\dm _{\frac{\overline{q}}{2}}=0$, for all $r\in[\frac{\ov q}2,\infty)$, we deduce 
\be\label{WB} \dm w(t)\dm _{r}\leq  c\, \delta^{-\frac{1}{2}}\sup_{(\delta,t)}[\tau^{\frac{1}{2}}(\dm \overline{u}\dm _{\infty}+ \dm \overline{v}\dm _{\infty}+\dm u^0\dm _{\infty})]\int_{\delta}^t \dm w(\tau
)\dm _{r}(t-\tau)^{-\frac{1}{2}}\,d\tau\,.\ee
Applying Lemma\,\ref{L-GWSI} with $h(\tau)=\dm w(\tau)\dm _{r}$, we obtain $\dm w(t)\dm _{r}=0$  for all   $t\in[\delta,T)$.\item[2)] Let $u_0 \in L^{2n}_{\alpha}(\R^n)$. By virtue of \rf{proprieta_L_weak_parta_non_lineare1}, we discuss the uniqueness in the quasi-norm $\dm \cdot\dm _{{(n,\,\infty)}}$. From \eqref{ultima} and from the estimate related to $\dm \nabla \widehat{\psi}\dm _{(n',1)}$ given in \eqref{SPLS},  we get:
\be\label{ultima3}\ba{ll}
|(w(t),\psi_0)|\hskip-0.2cm&\displ\leq |(w(s),{\psi}(t-s)| + c\sup_{(s,t)}[\tau^{\frac{1}{2}}(\dm \overline{u}\dm _{\infty}+ \dm \overline{v}\dm _{\infty}+\dm u^0\dm _{\infty})] \notag\\
&\displ\hskip2cm\times \sup_{(s,t)}\dm w(\tau)\dm _{\frac{\overline{q}}{2}}\int_s^t \tau^{-\frac{1}{2}}\dm \nabla \widehat{\psi}\dm _{(n',1)}\,d\tau \notag\\
&\displ\leq \dm w(s)\dm _{\frac{\overline{q}}{2}}\dm {\psi}_0\dm _{(n',1)} + c\sup_{(s,t)}[\tau^{\frac{1}{2}}(\dm \overline{u}\dm _{\infty}+ \dm \overline{v}\dm _{\infty}+\dm u^0\dm _{\infty})] \notag\\
&\displ\hskip2cm\times\dm \psi_0\dm _{(n',1)}\sup_{(s,t)}\dm w(\tau)\dm _{\frac{\overline{q}}{2}}\int_s^t \tau^{-\frac{1}{2}}(t-\tau)^{-\frac{1}{2}}\,d\tau\,,
\ea\ee
for all $t \in [0,T(u)) \cap [0,T(v))$. Since $\psi_0$ is arbitrary, we obtain:
\begin{align*}
\dm w(t)\dm _{(n,\infty)}\leq c\sup_{(0,t)}[\tau^{\frac{1}{2}}(\dm \overline{u}\dm _{\infty}+ \dm \overline{v}\dm _{\infty}+\dm u^0\dm _{\infty})]\sup_{(0,t)}\dm w(\tau)\dm _{(n,\infty)}.
\end{align*}
The validity of the limit property in \eqref{limite_linfinito_soluzione} ensures the uniqueness on some interval $(0,\delta]$.  Moreover,   for $t>\delta$   we can argue as already made for  \rf{WB}. In this case we take into account that 
for  $s=\delta$, without loss the generality,  $\dm  w(\delta)\dm _{r} =0$, and then, for $r\in(p,\infty)$, we set 
\be \dm  w(t)\dm _{r} \leq  c\, \delta^{-\frac{1}{2}}\sup_{(\delta,t)}[\tau^{\frac{1}{2}}(\dm \overline{u}\dm _{\infty}+ \dm \overline{v}\dm _{\infty}+\dm u^0\dm _{\infty})]\int_{\delta}^t \dm  w(\tau)\dm _{r} (t-\tau)^{-\frac{1}{2}}\,d\tau.\ee
Applying Lemma\,\ref{L-GWSI} with $h(\tau)=\dm w(\tau)\dm _{r}$, we obtain $\dm  w(t)\dm _{r}=0$\, for all  $t\in[\delta,T)$.
\item[3)] $u_0 \in L^p_{\alpha}(\R^n)$ with $p>2n$. We consider the norm $\dm \cdot\dm _{(\frac{np}{2(p-n)},\infty)}$.
From \eqref{ultima},  we get:
\begin{align*}
|(w(t),\psi_0)|&\leq |(w(s),{\psi}(t-s)| + c\sup_{(s,t)}[\tau^{\frac{1}{2}}(\dm \overline{u}\dm _{\infty}+ \dm \overline{v}\dm _{\infty}+\dm u^0\dm _{\infty})]\times \notag\\
&\hskip2cm\times \sup_{(s,t)}\dm w(\tau)\dm _{(\frac{np}{2(p-n)},\infty)}\int_s^t \tau^{-\frac{1}{2}}\dm \nabla \widehat{\psi}\dm _{(\frac{np}{np-2(p-n)},1)}d\tau \notag\\
&\leq \dm w(s)\dm _{(\frac{np}{2(p-n)},\infty)}\dm \widehat{\psi}(s)\dm _{(\frac{np}{np-2(p-n)},1)} + c\sup_{(s,t)}[\tau^{\frac{1}{2}}(\dm \overline{u}\dm _{\infty}+ \dm \overline{v}\dm _{\infty}+\dm u^0\dm _{\infty})]\times \notag\\
&\hskip2cm\times\dm \psi_0\dm _{(\frac{np}{np-2(p-n)},1)}\sup_{(s,t)}\dm w(\tau)\dm _{(\frac{np}{2(p-n)},\infty)}\int_s^t \tau^{-\frac{1}{2}}(t-\tau)^{-\frac{1}{2}}\,d\tau\,.
\end{align*}
Taking   property \eqref{proprieta_L_weak_parta_non_lineare} into account and repeat the same argument lines as in the case 1)-2), we get $\dm w(t)\dm _{(\frac{np}{2(p-n)},\infty)}=0$ \,for all $t\in (0,T)$.
\end{itemize}
The result is proved.
\end{proof}
\section{Appendix} 
For $\beta \in \big(-\frac{n}{q},\frac{n}{q'}\big)$, we set:
\begin{gather*}
J^q_{\beta}(\mathbb{R}^n)=\{v\in L^q_{\beta}(\mathbb{R}^n)\, |\, (v,\nabla h)=0 \,\, \text{for all}\, h \in W^{1,q'}_{loc}(\mathbb{R}^n,\beta ')\text{,}\, \nabla h \in L^{q'}_{\beta'}(\R^n)\}\\
G^q_{\beta}(\mathbb{R}^n)=\{u\in L^q_{\beta}(\mathbb{R}^n)\, |\, \exists h\, : \,\, u=\nabla \pi \text{,}\,\text{with} \, \pi \in W^{1,q}_{loc}(\mathbb{R}^n,\beta ')\text{,}\, \nabla \pi \in L^{q}_{\beta}(\R^n)\} 
\end{gather*}
Hereafter, with $\beta'$ we denote the dual weigth , that is $\beta'=-\beta$.\par
Let us remark that in the discussion of the following results, we take $\beta \neq 0$. Indeed, the case $\beta=0$ corrisponds to the one described by the classical theory of Helmholtz decomposition. 
\begin{lemma} \label{lemma}{\sl
Let $q \in (1,+\infty)$ and $ \beta \in \big(-\frac{n}{q},\frac{n}{q'}\big)$. Then we get
\begin{equation}
J^q_{\beta}(\mathbb{R}^n)\cap G^q_{\beta}(\mathbb{R}^n)=\{0\}
\end{equation}}
\end{lemma}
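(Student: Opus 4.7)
The plan is to reduce the claim to a Liouville-type statement for harmonic functions on $\mathbb{R}^n$. Take any $u \in J^q_\beta(\mathbb{R}^n) \cap G^q_\beta(\mathbb{R}^n)$. From membership in $G^q_\beta$, there exists $\pi \in W^{1,q}_{loc}(\mathbb{R}^n,\beta')$ with $u = \nabla \pi$ and $\nabla \pi \in L^q_\beta(\mathbb{R}^n)$. My goal is to show successively that $\pi$ is harmonic, that $\nabla \pi$ decays at infinity, and that the decay forces $\nabla \pi \equiv 0$.

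First I would verify that the test class appearing in the definition of $J^q_\beta$ contains the ordinary $C_0^\infty(\mathbb{R}^n)$. This is precisely where the upper bound $\beta < n/q'$ is used: for $h \in C_0^\infty$, $\nabla h$ has compact support and $|x|^{q'\beta'} = |x|^{-q'\beta}$ is locally integrable near the origin because $-q'\beta > -n$, so $\nabla h \in L^{q'}_{\beta'}(\mathbb{R}^n)$. Then $u \in J^q_\beta$ gives $(u,\nabla h) = 0$ for every such $h$, which is exactly $\Delta \pi = 0$ in $\mathcal{D}'(\mathbb{R}^n)$. By Weyl's lemma, $\pi$ is smooth and harmonic, hence each component of $\nabla \pi$ is itself smooth and harmonic.

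Next I would extract pointwise decay of $\nabla \pi$ from its weighted $L^q$-integrability via the mean value property. For $x \neq 0$, set $R := |x|/2$; since $\nabla \pi$ is harmonic and smooth, the mean value property and H\"older's inequality yield
$$|\nabla \pi(x)| \leq \frac{C}{R^n}\int_{B_R(x)} |\nabla \pi(y)|\,dy \leq C R^{-n/q}\,\|\nabla \pi\|_{L^q(B_R(x))}.$$
For $y \in B_R(x)$ one has $|y| \sim |x|$, so $\|\nabla \pi\|_{L^q(B_R(x))} \leq C|x|^{-\beta}\|\nabla \pi\|_{q,\beta}$, and consequently
$$|\nabla \pi(x)| \leq C\,|x|^{-n/q-\beta}\,\|\nabla \pi\|_{q,\beta}.$$
The lower bound $\beta > -n/q$ is decisive here: it forces $n/q+\beta > 0$, so $\nabla \pi(x) \to 0$ as $|x| \to \infty$. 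A componentwise harmonic function on $\mathbb{R}^n$ that vanishes at infinity must vanish identically (maximum principle on an exhausting sequence of balls), so $u = \nabla \pi \equiv 0$.

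The only delicate step I anticipate is the verification in the first paragraph that $C_0^\infty(\mathbb{R}^n)$ lies in the dual test class defining $J^q_\beta$; once this is secured, the two endpoint conditions on $\beta$ play their symmetric roles ($\beta < n/q'$ produces harmonicity, $\beta > -n/q$ produces decay at infinity), and what remains is a standard mean-value--Liouville argument.
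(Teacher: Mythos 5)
Your proof is correct, but it follows a genuinely different route from the paper's. The paper argues by duality: it tests $w=\nabla\pi$ against $\nabla\psi$, where $\psi$ is the Newtonian potential solving $\Delta\psi=\nabla\cdot g$ for $g\in C_0^\infty(\R^n)$, invokes the Calder\'on--Zygmund theorem together with Stein's weighted inequality $\dm\nabla\psi|x|^{\beta}\dm_{q'}\leq A\dm g|x|^{\beta}\dm_{q'}$, and then kills the commutator terms produced by a cutoff $h_R$ (using a Poincar\'e-type bound on $\pi-\ov\pi_R$ and the decay $|\nabla\psi|\lesssim|x|^{-n}$), concluding $(\nabla\pi,g)=0$ for all $g$. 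Your argument replaces all of this with: harmonicity of $\pi$ (from testing against $C_0^\infty$ gradients), the mean value property giving $|\nabla\pi(x)|\leq C|x|^{-\frac nq-\beta}\dm\nabla\pi\dm_{q,\beta}$, and Liouville. This is more elementary --- it needs neither Stein's weighted estimate nor the cutoff bookkeeping --- and it isolates cleanly which endpoint of the $\beta$-interval does what ($\beta<n/q'$ for admissibility of $C_0^\infty$ test functions and local integrability of $\nabla\pi$, $\beta>-n/q$ for decay at infinity). What the paper's heavier machinery buys is reuse: the same potential $\psi$ and the same estimates are the engine of the existence part of the Helmholtz decomposition in Theorem\,\ref{dec_helmotz_teorema}, where one sets $v:=u-\nabla\psi$, so the paper's uniqueness proof is a warm-up for that construction rather than a standalone argument. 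One small point to make explicit in your write-up: the identity $(\nabla\pi,\nabla h)=-\langle\pi,\Delta h\rangle$ requires $\nabla\pi\in L^1_{loc}$, which follows from $\nabla\pi\in L^q_\beta$ by H\"older against $|x|^{-\beta}$ on compact sets, again using $\beta<n/q'$; this is the same ``delicate step'' you flagged and it goes through.
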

\begin{proof}
Assume that exists $w \in J^q_{\beta}(\mathbb{R}^n)\cap G^q_{\beta}(\mathbb{R}^n) $ such that $w\neq 0$. Then, by definition:
\begin{gather*}
(w,\nabla h)=0 \quad \text{for all}\, h \in W^{1,q'}_{loc}(\mathbb{R}^n,\beta ')\text{,}\, \nabla h \in L^{q'}_{\beta'}(\R^n) \\
w=\nabla \pi \quad \text{with} \, \pi \in W^{1,q}_{loc}(\mathbb{R}^n,\beta )\text{,}\, \nabla \pi \in L^{q}_{\beta}(\R^n)
\end{gather*}
Let $h_R \in (0,1)$ be a smooth cutoff function with:
\begin{equation*}
h_R =
\begin{cases}
1 \quad \text{if}\,\, |x|\leq R \\
0 \quad \text{if}\,\, |x|\geq 2R
\end{cases}
\end{equation*}
and $\nabla h_R=O(\frac{1}{R})$.\par
We denote by $\psi$ the solution of the problem:
\begin{equation} \label{problem1}
\Delta u= \nabla \cdot g
\end{equation}
with $g\in C^{\infty}_0(\R^n)$.
We know that
\begin{equation}
\psi = \nabla \cdot \int_{\R^n} \mathcal{E}(x-y)g(y)\,dy
\end{equation}
and
\begin{equation}\label{86}
\nabla \psi= \int^{*}_{\R^n} \nabla \nabla \mathcal{E}(x-y)g(y)\,dy
\end{equation}
where $\mathcal{E}$ is the fundamental solution of the Laplace equation.\par
We can employ Calder\'on-Zygmund theorem to obtain the following estimate:
\begin{equation}
\dm \nabla \psi\dm _{q'}\leq C\dm g\dm _{q'} \quad \text{for all}\, q' \in (1,+\infty).
\end{equation}
Moreover, by hypothesis on $\beta$, we can apply the result due to E. M. Stein in \cite{Stein}
\begin{equation}\label{stein}
\dm \nabla \psi |x|^{\beta}\dm _{q'}\leq A(q',\beta) \dm g |x|^{\beta}\dm _{q'}
\end{equation}
Considering  $R> diam(supp\,g)$, the following computation holds:

\begin{align*}
0&=(w,\nabla\psi)=(\nabla \pi,\nabla \psi)=(h_R \nabla \pi, \nabla \psi) + ( (1-h_R)\nabla \pi,\nabla \psi)\\
 &= -((\pi-\overline{\pi}_R)h_R,\nabla \cdot g)+ ((\pi-\overline{\pi}_R)\nabla h_R,\nabla \psi)+( (1-h_R)\nabla \pi,\nabla \psi)\\
&= (\nabla\pi,g)-((\pi-\overline{\pi}_R)\nabla h_R,\nabla \psi)+  ((1-h_R)\nabla \pi,\nabla \psi)= I_1 + I_2 + I_3
\end{align*}
where $\overline{\pi}_R=\int_{B_{2R}(O)}\pi(y) \, dy$.\par
Trivially, $I_3$ tends to zero for $R\to +\infty$.\par
Now, we estimate $I_2$. We set $K_R=B_{2R}(O)\setminus B_{R}(O)$.\par
For $\beta \in (-\frac{n}{q},0)$, we get:
\begin{align*}
|I_2|&\leq \sfrac{1}{R}\dm (\pi-\overline{\pi}_R)\dm _{L^q(B_{2R})} \dm \nabla \psi\dm _{{L^{q'}(K_R)}}\leq \sfrac{C}{R}R \dm \nabla\pi\dm _{L^q(B_{2R})} \dm \nabla \psi\dm _{{L^{q'}(K_R)}}\\
&\leq  C\big\dm  \sfrac{\nabla\pi}{|x|^{\beta}}|x|^{\beta}\dm  _{L^q(B_{2R})}\dm  \sfrac{\nabla \psi}{|x|^{\beta'}}|x|^{\beta'}\dm  _{{L^{q'}(K_R)}}\leq C R^{\beta'}\dm \nabla \pi |x|^{\beta}\dm _{L^q(B_{2R})} \sfrac{1}{R^{\beta'}}\dm \nabla \psi|x|^{\beta'}\dm _{{L^{q'}(K_R)}}.
\end{align*}
This last estimate ensure that $I_2$ tends to zero for $R\to +\infty$.\par
Now, let $\beta \in (0,\frac{n}{q'}$. Being $g$ with compact support, from \eqref{86} we get $|\nabla \psi |\leq \frac{B}{|x|^n}$ for all $x\in \R^n$ such that $|x|>2\,diam(supp\,g)$. So, applying H\"older's inequality,  we get:
\begin{align*}
|I_2|&\leq \sfrac{1}{R}\dm \nabla \psi\dm _{L^{\infty}(K_R)}\int_{B_{2R}(O)}|\pi - \overline{\pi}_R|\,dx\leq \sfrac{C}{R}R\dm \nabla \psi\dm _{L^{\infty}(K_R)} \int_{B_{2R}(O)}|\nabla \pi|\,dx  \\
&= C\dm \nabla \psi\dm _{L^{\infty}(K_R)}\int_{B_{2R}(O)}\sfrac{|\nabla \pi|}{|x|^{\beta}}|x|^{\beta}\,dx \leq C\dm \nabla \psi\dm _{L^{\infty}(K_R)}\dm \nabla \pi |x|^{\beta}\dm _q \Big[\int_{B_{2R}(O)}\sfrac{1}{|x|^{\beta q'}}\Big]^{\frac{1}{q'}}\\
&\leq C\dm \nabla \pi |x|^{\beta}\dm _q R^{-\frac{n}{q}-\beta}
\end{align*}
which tends to zero for $R\to +\infty$.\par
So, for all $\beta \in (-\frac{n}{q},0)\cup(0,\frac{n}{q'})$ we deduce that:
\begin{equation}
(\nabla \pi,g)=0,
\end{equation}
for all $g\in C^{\infty}_0(\R^n)$. Thus $w=0$ and this contraddiction implies the thesis.
\end{proof}

We are in position to prove Helmhotz decomposition for $L^q_{\beta}(\R^n)$-spaces:
\begin{theorem}\label{dec_helmotz_teorema}{\sl 
Let $q \in (1,+\infty)$ and $\beta \in (-\frac{n}{q},\frac{n}{q'})$. Then we get 
\begin{equation}
L^q_{\beta}(\R^n)=J^q_{\beta}(\mathbb{R}^n)\oplus G^q_{\beta}(\mathbb{R}^n)\,,
\end{equation}
 that is for all $u \in L^q_{\beta}(\R^n)$, $u=v+\nabla \pi_u$, with the following integral identities:
\begin{gather}
(u,\nabla \pi)=(\nabla \pi_u ,\nabla \pi) \quad \text{for all}\, \, \nabla \pi \in G_{\beta '}^{q'}(\R^n) \label{1)}\\
(v,\nabla \pi)=0 \quad \text{for all}\, \, \nabla \pi \in G_{\beta '}^{q'}(\R^n) \label{2)}
\end{gather}
and
\begin{equation} \label{3}
\dm v\dm _{q,\beta} + \dm  \nabla \pi \dm _{q,\beta} \leq C \dm u\dm _{q,\beta}
\end{equation}
whith $C$ indipendent of $u$.}
\end{theorem}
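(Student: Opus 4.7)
The plan is to construct the decomposition by explicitly solving a Poisson-type equation, combining Stein's weighted Calder\'on--Zygmund theorem (already invoked in the preceding Lemma \ref{lemma} as estimate \rf{stein}) with Lemma \ref{lemma} itself, which guarantees uniqueness. First I would work on the dense subspace $C_0^\infty(\R^n)$. For $u\in C_0^\infty(\R^n)$, set
\[\pi_u(x):=\intl{\R^n}\n_y\mathcal{E}(x-y)\cdot u(y)\,dy\,,\]
so that $\nabla\pi_u=\nabla\nabla\mathcal{E}\ast u$ (the matrix of second-order Riesz-type singular integrals of $u$), and $\Delta\pi_u=\nabla\cdot u$ in the classical sense. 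Set $v:=u-\nabla\pi_u$, which is a pointwise divergence-free vector field.

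Next I would invoke the weighted Calder\'on--Zygmund estimate: since $\beta\in(-\frac{n}{q},\frac{n}{q'})$ is precisely the Muckenhoupt range ensuring $|x|^{\beta q}\in A_q(\R^n)$, Stein's theorem (cf.\ \rf{stein}) yields a constant $C$ independent of $u$ such that $\dm\nabla\pi_u\dm_{q,\beta}\leq C\dm u\dm_{q,\beta}$, and therefore
\[\dm v\dm_{q,\beta}+\dm\nabla\pi_u\dm_{q,\beta}\leq (1+2C)\dm u\dm_{q,\beta}\,,\]
which is \rf{3} for smooth $u$. I would then extend by density: a general $u\in L^q_\beta(\R^n)$ is approximated by $u_m\in C_0^\infty(\R^n)$ in $L^q_\beta$-norm (standard truncation plus mollification, using that $|x|^\beta$ is locally integrable in the admissible range of $\beta$); the corresponding sequences $\{v_m\}$ and $\{\nabla\pi_{u_m}\}$ are Cauchy in $L^q_\beta$ by the above bound, so they converge to limits $v\in L^q_\beta$ and $\nabla\pi_u\in G^q_\beta$ satisfying $u=v+\nabla\pi_u$ and \rf{3}. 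Uniqueness of this decomposition follows from Lemma \ref{lemma}, because any two such decompositions differ by an element of $J^q_\beta\cap G^q_\beta=\{0\}$.

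To verify \rf{2)}, I would fix $\nabla\pi\in G^{q'}_{\beta'}(\R^n)$ and show $(v_m,\nabla\pi)=0$ for each $m$, then pass to the limit by duality. For the smooth representative, with a cutoff $h_R$ as in Lemma \ref{lemma}, write $(v_m,\nabla\pi)=(h_Rv_m,\nabla\pi)+((1-h_R)v_m,\nabla\pi)$; the first piece, after integration by parts using $\nabla\cdot v_m=0$, reduces to the boundary-type term $-((\pi-\overline\pi_R)\nabla h_R,v_m)$, which is handled exactly by the H\"older-weighted splitting carried out in the proof of Lemma \ref{lemma} (decaying to $0$ as $R\to\infty$ precisely because $\beta$ lies in the symmetric range $(-\frac{n}{q},\frac{n}{q'})$), while the second piece vanishes in the limit by dominated convergence since $v_m\in L^q_\beta$ and $\nabla\pi\in L^{q'}_{\beta'}$. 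Letting $m\to\infty$ yields $(v,\nabla\pi)=0$, i.e.\ $v\in J^q_\beta$. Identity \rf{1)} is then immediate from $u=v+\nabla\pi_u$.

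The main obstacle I expect is controlling the cutoff boundary term in the orthogonality check when the test potential $\pi$ has no decay at infinity and $h$ is merely in $W^{1,q'}_{loc}$: one must use the full strength of the symmetric weight condition $\beta\in(-\frac{n}{q},\frac{n}{q'})$, treating separately the sub-cases $\beta<0$ and $\beta>0$ as done in Lemma \ref{lemma}. The weighted Calder\'on--Zygmund estimate of Stein is the essential analytic input; everything else is structural bookkeeping combined with the uniqueness already established.
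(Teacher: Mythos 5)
Your proposal is correct and follows essentially the same route as the paper's own proof: both construct $\nabla\pi_u$ as the Newtonian-potential solution of $\Delta\pi_u=\nabla\cdot u$ for smooth compactly supported $u$, control it via Stein's weighted Calder\'on--Zygmund estimate \rf{stein}, verify the orthogonality $(v,\nabla\pi)=0$ by the same cutoff argument as in Lemma\,\ref{lemma}, and then extend to all of $L^q_\beta(\R^n)$ by density using the uniform bound. The only (immaterial) differences are the order in which \rf{2)} and \rf{3} are established and that the paper explicitly checks that the limit of the gradients $\nabla\psi_k$ is again a gradient by testing against divergence-free fields.
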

\begin{proof}
Let us first considering a field $u \in C^{\infty}_0(\R^n)$. As we have done before, we denote by $\psi$ the solution of the problem $\Delta \psi=\nabla \cdot u$ and for such solution \eqref{stein} holds. We set:
\begin{equation}\label{setting}
v:= u- \nabla \psi
\end{equation}
We claim that $(v,\nabla \pi)=0$ for all $\pi \in W^{1,q'}_{loc}(\R^n,\beta')$, with $\nabla \pi \in L^{q'}_{\beta'}(\R^n)$. Considering $h_R$ and $\overline{\pi}_R$ as in lemma \ref{lemma} with $R>diam(supp\,u)$, then:
\begin{align*}
(v,\nabla \pi)&=(v,h_R \nabla (\pi - \overline{\pi}_R) + (v,(1-h_R)\nabla \pi)=\\
&=-(\nabla \cdot v \, h_R,(\pi-\overline{\pi}_R)) - (v\cdot \nabla h_R,\pi-\overline{\pi}_R)+(v,(1-h_R)\nabla \pi)
\end{align*}
As we saw, the first term on the right hand is zero. Considering $|(v, \nabla \pi)|$, is enough to argue as in the case of Lemma \ref{lemma} to find that $(v, \nabla \pi)=0$. So, $v \in J^q_{\beta}(\mathbb{R}^n)$ and this fact implies \eqref{2)}. From \eqref{2)} and \eqref{setting}, we obtain \eqref{1)} and \eqref{3}.\par
Now, we consider $u \in L^{q}_{\beta}$. There exists a sequence $\{u_k\}\subset C^{\infty}_0(\R^n)$ converging to $u$ in $L^{q}_{\beta}$-norm. Thus:
\begin{equation*}
u_k=v_k+\nabla \psi_k
\end{equation*}
with $v_k \in J^{q}_{\beta}(\R^n)$ and $\nabla \psi_k \in G^{q}_{\beta}(\R^n)$. By virtue of \eqref{3}, we get:
\begin{equation*}
\dm v_k-v_m\dm _{q,\beta}+\dm \nabla \psi_k-\nabla \psi_m\dm _{q,\beta}\leq C\dm u_k-u_m\dm _{q,\beta}
\end{equation*}
In particular, $v_k$ converging to a field $v$ and $\nabla \psi_k$ converging to a field $h$ in $L^{q}_{\beta}$-norm. We claim that $v \in J^{q}_{\beta}(\R^n)$ and $h \in G^{q}_{\beta}(\R^n)$. As the matter of fact, we get:
\begin{equation*}
(v, \nabla p)=\lim_{k}(v_k, \nabla p)=0
\end{equation*}
for all $p \in W^{1,q'}_{loc}(\R^n,\beta')$, with $\nabla p \in  L^{q'}_{\beta'}(\R^n)$, and considering $\varphi \in \mathscr{C}_0(\R^n)$, we have:
\begin{equation*}
(h,\varphi)=\lim_{k}(\nabla \psi_k, \varphi)=-\lim_{k}(\psi_k, \nabla\cdot \varphi)=0
\end{equation*}
Thus, there exists  $\pi_u \in W^{1,q}_{loc}(\R^n,\beta)$ such that $h=\nabla \pi_u$. Moreover, applying Minkowski's inequality, we arrive at
\begin{align*}
\dm u-(v+\nabla \pi_u)\dm _{q,\beta}\leq \dm u- u_k +u_k-(v+\nabla \pi_u)\dm _{q,\beta}\leq \dm u- u_k\dm _{q,\beta} + \dm v_k-v \dm _{q,\beta} +\dm  \nabla \psi_k -\nabla \pi_u\dm _{q,\beta}
\end{align*}
which tends to zero for $k\to +\infty$.\par
Then, we get
\begin{equation*}
u=v+\nabla \pi_u
\end{equation*}
and
\begin{equation*}
\dm v\dm _{q,\beta}+ \dm \nabla \pi_u\dm _{q,\beta}=\lim_{k}\Big(\dm v_k\dm _{q,\beta}+ \dm \nabla \psi_k\dm _{q,\beta}\Big)\leq C \lim_{k}\dm u_k\dm _{q,\beta}= C\dm u\dm _{q,\beta}
\end{equation*}
This fact complete the proof.
\end{proof}
\begin{lemma}{\sl 
Let $q\in (1,+\infty)$ and $\beta \in (-\frac{n}{q},\frac{n}{q'})$. Then, $J_{\beta}^q(\R^n)\equiv$ completion of $\mathscr{C}_0(\R^n)$ in $L_{\beta}^q$-norm holds.}
\end{lemma}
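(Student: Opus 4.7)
One inclusion is routine. If $\phi\in\mathscr{C}_0(\R^n)$ and $h\in W^{1,q'}_{loc}(\R^n,\beta')$ with $\nabla h\in L^{q'}_{\beta'}(\R^n)$, an integration by parts over the compact support of $\phi$, combined with $\nabla\cdot\phi=0$, yields $(\phi,\nabla h)=0$. For $v$ in the $L^q_\beta$-closure of $\mathscr{C}_0$, picking $\phi_k\to v$ in $L^q_\beta$, H\"older's inequality in the weighted duality gives $|(v-\phi_k,\nabla h)|\leq\dm v-\phi_k\dm _{q,\beta}\dm \nabla h\dm _{q',\beta'}\to 0$, so that $v\in J^q_\beta(\R^n)$.

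For the opposite inclusion we would argue by a cut-off plus Bogovskii correction applied to a prior mollification. Given $v\in J^q_\beta(\R^n)$, testing the defining orthogonality of $J^q_\beta$ against $\nabla\varphi$ for $\varphi\in C^\infty_0(\R^n)$ (whose gradient is compactly supported, and therefore belongs to $L^{q'}_{\beta'}$) yields $\nabla\cdot v=0$ in the distributional sense. Consequently the mollified field $v_\eta:=v\ast\rho_\eta$ is smooth and still divergence-free, and $v_\eta\to v$ in $L^q_\beta$ as $\eta\to 0$; this last convergence relies on the fact that $|x|^{\beta q}$ belongs to the Muckenhoupt class $A_q(\R^n)$ precisely when $\beta\in(-\frac{n}{q},\frac{n}{q'})$, which ensures the boundedness of mollification on $L^q_\beta(\R^n)$.

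Next, let $\chi_R\in C^\infty_0(\R^n)$ be a cut-off with $\chi_R\equiv 1$ on $B_R$, $\chi_R\equiv 0$ outside $B_{2R}$ and $|\nabla\chi_R|\leq c/R$. The truncated field $\chi_R v_\eta$ is smooth and compactly supported, but its divergence $v_\eta\cdot\nabla\chi_R$ is supported in the annulus $A_R:=B_{2R}\setminus B_R$ and has vanishing integral. Applying the Bogovskii operator on the bounded Lipschitz domain $A_R$ produces a smooth correction $w_R$ compactly supported in $A_R$ with $\nabla\cdot w_R=v_\eta\cdot\nabla\chi_R$, so that $\Phi_{R,\eta}:=\chi_R v_\eta-w_R\in\mathscr{C}_0(\R^n)$. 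We then estimate
\[\dm v_\eta-\Phi_{R,\eta}\dm _{q,\beta}\leq\dm (1-\chi_R)v_\eta\dm _{q,\beta}+\dm w_R\dm _{q,\beta},\]
and finally diagonalize $R=R(\eta)\to\infty$ as $\eta\to 0$ to obtain an approximating sequence in $\mathscr{C}_0(\R^n)$ converging to $v$ in $L^q_\beta$.

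The main obstacle is controlling $\dm w_R\dm _{q,\beta}$ uniformly in such a way that it vanishes as $R\to\infty$. The classical Bogovskii estimate on the fixed reference annulus $A_1=B_2\setminus B_1$ scales, via $x=Ry$, to $\dm w_R\dm _{L^q(A_R)}\leq c\,R\,\dm v_\eta\cdot\nabla\chi_R\dm _{L^q(A_R)}$ with $c$ independent of $R$. Since on $A_R$ the weight $|x|^{\beta}$ is comparable to $R^{\beta}$, this unweighted estimate converts into
\[\dm w_R\dm _{q,\beta}\leq cR^{\beta}\dm w_R\dm _{L^q(A_R)}\leq cR^{1+\beta}\dm \nabla\chi_R\dm _\infty\dm v_\eta\dm _{L^q(A_R)}\leq c\,\dm v_\eta\dm _{L^q_\beta(A_R)},\]
whose right-hand side tends to zero because $v_\eta\in L^q_\beta(\R^n)$. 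Combined with $\dm (1-\chi_R)v_\eta\dm _{q,\beta}\to 0$ by dominated convergence in $L^q_\beta$, this gives $\Phi_{R,\eta}\to v_\eta$ in $L^q_\beta$ as $R\to\infty$, completing the density argument.
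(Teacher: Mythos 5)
Your argument is correct, but it is genuinely different from what the paper does: the paper offers no proof at all for this lemma, merely citing Galdi, Simader--Sohr and Specovius-Neugebauer, where the standard route is functional-analytic --- one identifies the annihilator of $\mathscr{C}_0(\R^n)$ in $L^{q'}_{\beta'}(\R^n)$ with $G^{q'}_{\beta'}(\R^n)$ and then invokes Hahn--Banach together with the Helmholtz decomposition $L^{q'}_{\beta'}=J^{q'}_{\beta'}\oplus G^{q'}_{\beta'}$ (which the Appendix has already established in Theorem\,6 and Lemma\,13), so that the closure of $\mathscr{C}_0$ coincides with the pre-annihilator of $G^{q'}_{\beta'}$, i.e.\ with $J^q_\beta$. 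You instead give a direct constructive approximation: mollify, cut off, and repair the divergence with a Bogovski\u{\i} correction on the annulus. The two nontrivial external inputs you rely on are both invoked with the correct parameter ranges --- the Muckenhoupt condition $|x|^{\beta q}\in A_q(\R^n)$ is exactly $\beta\in(-\frac nq,\frac n{q'})$ and justifies boundedness and convergence of the mollification in $L^q_\beta$, and the scaling $\dm w_R\dm_{L^q(A_R)}\leq cR\dm v_\eta\cdot\nabla\chi_R\dm_{L^q(A_R)}$ combined with $|x|^\beta\simeq R^\beta$ on $A_R$ correctly converts the unweighted Bogovski\u{\i} bound into the weighted one, with a right-hand side $c\dm v_\eta\dm_{L^q_\beta(A_R)}$ that vanishes as $R\to\infty$. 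What your approach buys is self-containedness and an explicit approximating sequence; what the duality route buys is brevity, since the hard work is already done in the Helmholtz decomposition. Two cosmetic points you should fix in a write-up: choose $\chi_R$ so that $\mathrm{supp}\,\nabla\chi_R$ is compactly contained in the \emph{open} annulus $A_R$ (e.g.\ $\chi_R\equiv1$ on $B_{5R/4}$, $\chi_R\equiv0$ off $B_{7R/4}$), so that the Bogovski\u{\i} solution is genuinely in $C^\infty_0(A_R)$; and note explicitly that $L^q_\beta(\R^n)\subset L^1_{loc}(\R^n)$ (which again uses $\beta q'<n$) so that the convolution $v\ast\rho_\eta$ is well defined.
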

\bp See \cite{GPG}, \cite{SS} or \cite{HDS} \ep

\end{document}